 \definecolor{DarkBlue}{rgb}{0.0,0.0,0.8} 
\definecolor{DarkGreen}{rgb}{0.0,0.55,0.14}
\definecolor{DarkRed}{rgb}{0.5,0,0.07}
\newtheorem{thm}{Theorem}[section]
\newtheorem{lem}[thm]{Lemma}
\newtheorem{prop}[thm]{Proposition}
\newtheorem{cor}[thm]{Corollary}
\theoremstyle{definition}
\newtheorem{dfn}[thm]{Definition}
\newtheorem{rem}[thm]{Remark}
\newtheorem{ass}[thm]{Assumption}
\newtheorem{ex}[thm]{Example}
\def\be#1 {\begin{equation} \label{#1}}
\newcommand{\ee}{\end{equation}}
\newcommand{\mb}{\medskip\noindent}
\newcommand{\R}{\mathbb R}
\newcommand{\C}{\mathbb C}
\newcommand{\A}{{\mathcal A}}
\newcommand{\K}{\kappa}
\def\Xint#1{\mathchoice
   {\XXint\displaystyle\textstyle{#1}}%
   {\XXint\textstyle\scriptstyle{#1}}%
   {\XXint\scriptstyle\scriptscriptstyle{#1}}%
   {\XXint\scriptscriptstyle\scriptscriptstyle{#1}}%
   \!\int}
\def\XXint#1#2#3{{\setbox0=\hbox{$#1{#2#3}{\int}$}
     \vcenter{\hbox{$#2#3$}}\kern-.5\wd0}}
\def\aver#1{\Xint-_{#1}}
\newcommand{\M}{{\mathcal M}}
\author{Nadine Badr}
\address{Nadine Badr - Universit\'e de Lyon, CNRS, Universit\'e Lyon 1 \\ Institut Camille Jordan \\ 43 boulevard du 11 Novembre 1918 \\ F-69622 Villeurbanne Cedex, France}
\email{badr@math.univ-lyon1.fr}
\urladdr{http://math.univ-lyon1.fr/~badr/}
\author{Fr\'ed\'eric Bernicot}
\address{Fr\'ed\'eric Bernicot - CNRS - Universit\'e Lille 1 \\ Laboratoire de math\'ematiques Paul Painlev\'e \\ 59655 Villeneuve d'Ascq Cedex, France}
\email{frederic.bernicot@math.univ-lille1.fr}
\urladdr{http://math.univ-lille1.fr/~bernicot/}
\author{Emmanuel Russ}
\address{Emmanuel Russ - Institut Fourier \\ Universit\'e Joseph Fourier Grenoble I \\ 100 rue des maths, BP 74 \\ 38402 Saint-Martin-d'H\`eres Cedex, France}
\email{eruss@fourier.ujf-grenoble.fr}
\urladdr{http://www.cmi.univ-mrs.fr/~russ/}
\title[Algebra properties for Sobolev spaces]{Algebra properties for Sobolev spaces- Applications to semilinear PDE's on manifolds}
\date{July 19, 2011}
\begin{document}

\subjclass[2000]{46E35, 22E30, 43A15}

\keywords{Sobolev spaces, Riemannian manifold, algebra rule, paraproducts, heat semigroup}

\begin{abstract} In this work, we aim to prove algebra properties for generalized Sobolev spaces $W^{s,p} \cap L^\infty$ on a Riemannian manifold, where $W^{s,p}$ is of Bessel-type $W^{s,p}:=(1+L)^{-s/m}(L^p)$ with an operator $L$ generating a heat semigroup satisfying off-diagonal decays. We don't require any assumption on the gradient of the semigroup. To do that, we propose two different approaches (one by a new kind of paraproducts and another one using functionals). We also give a chain rule and study the action of nonlinearities on these spaces and give applications to semi-linear PDEs. These results are new on Riemannian manifolds (with a non bounded geometry) and even in the Euclidean space for Sobolev spaces associated to second order uniformly elliptic operators in divergence form.
\end{abstract}

\maketitle

\begin{quote}
\footnotesize\tableofcontents
\end{quote}

\section{Introduction}

\subsection{The Euclidean setting}

 It is known that in  $\mathbb{R}^d$,  the Bessel potential space 
$$W^{\alpha,p}_{\Delta}=\left\lbrace f \in L^p; \, \Delta^{\alpha/2}f \in L^p  \right\rbrace,
$$
 is an algebra under the pointwise product for all $1< p<\infty$ and $\alpha >0$ such that $\alpha p>d$. This result is due to Strichartz \cite{St}.

Twenty years after Strichartz work, Kato and Ponce \cite{KP} gave a stronger result. They proved that for all $1< p<\infty$ and  $\alpha >0$,
$W^{\alpha,p}_{\Delta}\cap L^{\infty}$ is an algebra under the pointwise product. 
Nowadays, these properties and more general Leibniz rules can be ``easily" obtained in the Euclidean setting using paraproducts and boundedness of these bilinear operators. This powerful tool allows us to split the pointwise product in three terms, the regularity of which can be easily computed with the ones of the two functions.

There is also a work of Gulisashvili and Kon \cite{GK} where it is shown that the algebra property remains true considering the homogeneous Sobolev spaces. That is for all  $1< p<\infty$ and  $\alpha >0$,
$\dot W^{\alpha,p}_{\Delta}\cap L^{\infty}$ is an algebra under the pointwise product. 

The main motivation of such inequalities (Leibniz rules and algebra properties) comes from the study of nonlinear PDEs. In particular, to obtain well-posedness results in Sobolev spaces for some semi-linear PDEs, we have to understand how the nonlinearity acts on Sobolev spaces. This topic, the action of a nonlinearity on Sobolev spaces (and more generally on Besov spaces), has given rise to numerous works in the Euclidean setting where the authors try to obtain the minimal regularity on a nonlinearity $F$ such that the following property holds
$$ f\in B^{s,p} \Longrightarrow F(f) \in B^{s,p}$$
where $B^{s,p}$ can be (mainly) Sobolev spaces or more generally Besov spaces (see for example the works of Sickel \cite{Sickel} with Runst \cite{RS} and the work of Bourdaud \cite{Bourdaud} ... we refer the reader to \cite{BS} for a recent survey on this topic).

\subsection{On Riemannian manifolds}

Analogous problems in a non Euclidean context do not seem to have been considered very much. In \cite{CRT}, Coulhon, Russ and Tardivel extended these results of algebra property to the case of Lie groups and also for Riemannian manifolds with Ricci curvature bounded from below and positive injective radius. In this setting, the heat kernel and its gradient satisfy pointwise Gaussian upper bounds, which play a crucial role in the proof.
\bigskip

The goal of this paper is to study the algebra property of Sobolev spaces on more general Riemannian manifolds. More precisely, we want to extend the above result to the case of a  Riemannian manifold with weaker geometric hypotheses and also to more general Sobolev spaces. Namely, we will consider a general semigroup on a Riemannian manifold with off-diagonal decays and obtain results under $L^p$ boundedness of the Riesz transform, which allows us to weaken the assumptions in \cite{CRT}. \par
\noindent In particular, we never use pointwise estimates for the kernel of the semigroup or its gradient. Recall that a pointwise Gaussian upper bound for the gradient of the heat kernel on a complete Riemannian manifold $M$ is known for instance when $M$ has non-negative Ricci curvature, which is a rather strong assumption. More generally, this poinwise Gaussian upper bound can be characterized in terms of Faber-Krahn inequalities for the Laplace Beltrami operator under Dirichlet boundary condition (see \cite{grigo}). Instead of that, all our proofs only rely on off-diagonal decays for the semigroup (see Assumption \ref{ass} below), which are satisfied, for instance, by the heat semigroup under weaker assumptions (see Example \ref{exlaplace} below). \par
\noindent Recall that there are several situations in which one encounters operators which satisfy off-diagonal decays even though their kernels do not satisfy pointwise estimates. In spite of this lack of pointwise estimates, it is still possible to develop a lot of analysis on these operators (see for instance \cite{hofmay,hmm} in the Euclidean case, \cite{amr} in the case of Riemannian manifolds). The present work shows that these off-diagonal decays in conjunction with the $L^p$ boundedness of Riesz transforms for $p$ close to $2$ are enough to yield the Leibniz rule for Sobolev spaces associated with the operators under consideration.\par
\noindent Aiming that, we will propose two different approaches. On the one hand, we will extend the method introduced in \cite{CRT} using characterization of Sobolev spaces with square functionals. On the other hand, we will extend the ``paraproducts point of view" to this framework. Classical paraproducts are defined via Fourier transforms, however in \cite{B2} the first author has introduced analogues of such bilinear operators relatively to a semigroup (see \cite{Phd} for another independent work of Frey). We will also describe how to use them to prove Leibniz rules and algebra properties for Sobolev spaces in a general context.

\subsection{Results}

By general Sobolev spaces, we mean the following:
\begin{dfn} Let $L$ be a linear operator of type $w\in [0,\pi/2)$ satisfying assumption (\ref{ass}) below (see Section 2), which can be thought of as an operator of order  $m>0$. For $1\leq p<\infty$ and $s>0$, we define the homogeneous Sobolev spaces $\dot W_L^{s,p}$ as
$$ \dot W_L^{s,p}:=\left\{ f\in L^{p}_{loc},\ L^{s/m}(f)\in L^p\right\}$$
with the semi-norm
$$\|f\|_{\dot W_L^{s,p}}:=\|L^{s/m}(f)\|_{L^p}.$$
And we define the non-homogeneous Sobolev spaces $ W_L^{s,p}$ as
$$  W_L^{s,p}:=\left\{ f\in L^p,\ L^{s/m}(f)\in L^p\right\}$$ 
with the norm
$$\|f\|_{ W_L^{s,p}}:=\|f\|_{L^p}+\|L^{s/m}(f)\|_{L^p}.$$
\end{dfn}

Concerning the algebra property of these Sobolev spaces, we first obtain the following generalized Leibniz rule:

\begin{thm}\label{Sobalg1}Let $M$ be a complete Riemannian manifold satisfying the doubling property $(D)$  and a local Poincar\'e inequality $(P_{s,loc})$ for some $1\leq s<2$. Let $L$ be a linear operator of type $w\in [0,\pi/2)$ satisfying Assumption (\ref{ass}) below (see section 2). Let $\alpha\in [0,1)$ and $r>1$ with $s\leq r<\infty$. Let $p_1,\, q_2\in[r,\infty)$, $q_1,p_2\in(r, \infty]$ verifying 
$$\frac{1}{r}=\frac{1}{p_i}+\frac{1}{q_i}$$ and
$r,p_1,q_2\in(s_-,s_+)$, $q_1,p_2\in(s_-,\infty]$ (see Assumption (\ref{ass}) below for the definition of $s_-,s_+$). \\
Then for all $f\in W^{\alpha,p_1}_L\cap L^{p_2}$ and $g\in W^{\alpha,q_2}_L\cap L^{q_1}$, we have $fg\in W^{\alpha,r}_L$ with 
$$ \|fg\|_{W_L^{\alpha,r}} \lesssim \|f\|_{W_L^{\alpha,p_1}} \|g\|_{L^{q_1}} + \|f\|_{L^{p_2}} \|g\|_{W_L^{\alpha,q_2}}.$$
Moreover, if we assume a global Poincar\'e inequality $(P_s)$, we have
$$ \|L^{\frac{\alpha}{m}}(fg)\|_{L^r} \lesssim \|L^{\frac{\alpha}{m}}f\|_{L^{p_1}} \|g\|_{L^{q_1}} + \|f\|_{L^{p_2}} \|L^{\frac{\alpha}{m}}g\|_{L^{q_2}}.$$
\end{thm}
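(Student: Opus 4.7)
The plan is to implement the paraproduct approach announced in the
introduction, coupled with a square-function characterization of the
Sobolev (semi)norms. In broad outline: (i)~I prove a bilinear estimate
for a vertical square functional $\mathcal{S}_{\alpha}$ associated with
$L$, of the schematic form
\begin{equation*}
\|\mathcal{S}_{\alpha}(fg)\|_{L^r}\;\lesssim\;
\|\mathcal{S}_{\alpha}(f)\|_{L^{p_1}}\|g\|_{L^{q_1}}
+\|f\|_{L^{p_2}}\|\mathcal{S}_{\alpha}(g)\|_{L^{q_2}},
\end{equation*}
and (ii)~I convert this into the statement of the theorem via the
characterizations
$\|f\|_{W_L^{\alpha,p}}\simeq \|f\|_{L^p}+\|\mathcal{S}_{\alpha}(f)\|_{L^p}$
(available under $(P_{s,\mathrm{loc}})$, for $p\in(s_-,s_+)$) and
$\|L^{\alpha/m}f\|_{L^p}\simeq \|\mathcal{S}_{\alpha}(f)\|_{L^p}$
(available under the stronger $(P_s)$). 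The $L^r$-piece of
$\|fg\|_{W_L^{\alpha,r}}$ that is not captured by step (ii) is handled at
once by H\"older's inequality, using $\tfrac1r=\tfrac1{p_2}+\tfrac1{q_2}$
and the embedding $W_L^{\alpha,q_2}\hookrightarrow L^{q_2}$.

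For step (i), the starting point is the identity
\begin{equation*}
fg\;=\;\int_0^{\infty}\bigl[(tLe^{-tL}f)\,e^{-tL}g+e^{-tL}f\,(tLe^{-tL}g)\bigr]\,\frac{dt}{t},
\end{equation*}
coming from $fg=-\int_0^{\infty}\partial_t(e^{-tL}f\cdot e^{-tL}g)\,dt$,
which exhibits $fg$ as a sum of two bilinear paraproducts in which the
``high-frequency'' factor $tLe^{-tL}$ acts alternatively on $f$ and on $g$,
the remaining factor being pointwise dominated by the Hardy--Littlewood
maximal function thanks to the off-diagonal bounds in
Assumption~(\ref{ass}). Inserting this decomposition into the square
functional $\mathcal{S}_{\alpha}$ and using functional calculus to move a
factor $t^{\alpha/m}$ onto the derivative factor, one arrives at a
representation of $\mathcal{S}_{\alpha}(fg)$ by two model bilinear objects
of the form
\begin{equation*}
\Pi(u,v)\;=\;\left(\int_0^{\infty}\bigl|\varphi(tL)\bigl[\psi(tL)u\cdot e^{-tL}v\bigr]\bigr|^2\,\frac{dt}{t}\right)^{1/2},
\end{equation*}
with $\varphi,\psi$ holomorphic functions vanishing suitably at $0$ and
$\infty$; here $(u,v)\in\{(f,g),(g,f)\}$ corresponds to the two
paraproducts of the decomposition.

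The model estimate is then
$\|\Pi(u,v)\|_{L^r}\lesssim\|\mathcal{S}_{\alpha}(u)\|_{L^p}\|v\|_{L^q}$
for $p,r\in(s_-,s_+)$, $q>s_-$, $\tfrac1r=\tfrac1p+\tfrac1q$. I would
prove it by a Fefferman--Stein / tent-space duality argument: the
pointwise bound $|e^{-tL}v|\lesssim \mathcal{M}v$ factors the
$v$-dependence out of the $t$-integral, the quadratic estimate
$\|\psi(tL)u\|_{T^{p,2}}\lesssim \|\mathcal{S}_{\alpha}(u)\|_{L^p}$
(valid on $(s_-,s_+)$ by Assumption~(\ref{ass})) controls the
$u$-dependence, and H\"older in the tent-space pairing together with the
maximal inequality yield the claim. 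Applied to the two paraproducts in the
decomposition this produces (i), whence the two conclusions of the theorem
via (ii). The main obstacle I foresee is the careful choice of
$\varphi,\psi$ so that the integral genuinely reproduces
$\mathcal{S}_{\alpha}(fg)$ while each spectral factor satisfies the
required quadratic estimates individually: this is exactly where the
restriction of exponents to $(s_-,s_+)$ becomes unavoidable, since
outside it the underlying square functions of $L$ fail to be
$L^p$-bounded.
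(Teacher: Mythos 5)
Your overall architecture (bilinear estimate on spectral paraproducts $+$ square-function characterization of the Sobolev norm) is the right family of ideas, but the bilinear step hides the one genuinely hard point of the theorem, and as written it would not go through. Your decomposition $fg=\int_0^\infty\bigl[(tLe^{-tL}f)e^{-tL}g+e^{-tL}f\,(tLe^{-tL}g)\bigr]\tfrac{dt}{t}$ has no \emph{outer} spectral operator, so to estimate $L^{\alpha/m}(fg)$ (or $\varphi(sL)(fg)$ at a scale $s\neq t$) you must let a nonlocal function of $L$ act on the pointwise product $\psi(tL)f\cdot e^{-tL}g$. The phrase ``using functional calculus to move a factor $t^{\alpha/m}$ onto the derivative factor'' is exactly the step that is unavailable here: in the Euclidean case it is a Fourier-support argument (a product of two functions with spectrum in $\{|\xi|\lesssim t^{-1/m}\}$ again has spectrum there), but for a general operator $L$ on a manifold there is no such support calculus, and your model object $\bigl(\int_0^\infty|\varphi(tL)[\psi(tL)u\cdot e^{-tL}v]|^2\tfrac{dt}{t}\bigr)^{1/2}$, in which all three operators sit at the \emph{same} scale $t$, only records the diagonal interaction. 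The off-diagonal-in-scale interactions (outer frequency much higher than $t^{-1/m}$), i.e.\ the resonant part of the product, are precisely where the difficulty lives, and your scheme never addresses them. This is also visible in which hypotheses you use where: in your plan the Poincar\'e inequality, the cancellation $L(\mathbf{1})=0$ and the Riesz transform only enter through the norm characterization in step (ii), whereas in the actual proof they are the engine of the bilinear estimate itself.

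Concretely, the paper splits $fg$ into \emph{three} terms (decomposition \eqref{paraproduct}): two flanking paraproducts $\Pi_g(f),\Pi_f(g)$ with an outer $\phi(tL)$, which are handled essentially as you propose (commute $L^\beta$ through the outer operator, dominate $\phi(tL)g$ by $\mathcal{M}_{s_-}g$, and use the $L^p$-bounded square functions of Proposition \ref{prop:Lp}; this is Proposition \ref{prop:para1}), and a resonance term $\Pi(f,g)=\int_0^\infty\psi(tL)[\phi(tL)f\,\phi(tL)g]\tfrac{dt}{t}$ with an outer $\psi(tL)$ acting on a product of two low-frequency factors. For that term one cannot transfer $L^\beta$ onto either factor; instead (Proposition \ref{prop:para2}) one uses $L^\beta\psi(tL)(\mathbf{1})=0$ to subtract the ball average of $h_t=\phi(tL)f\,\phi(tL)g$, the Poincar\'e inequality $(P_s)$ to control the oscillation by $\nabla h_t$, the genuine Leibniz rule for $\nabla$, Fefferman--Stein, and the $L^p$ (hence $\ell^2$-valued) boundedness of $\nabla L^{-1/m}$ to convert $\nabla\phi(tL)$ back into $L^{1/m}\phi(tL)$ and close with square functions. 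Your two-term decomposition does not make this resonance term disappear; it merely relabels it as the unproved scale-interaction step above. To repair the proposal you would either have to introduce the outer spectral cut-offs (reproducing the paper's three-term decomposition) or supply an independent argument for the missing almost-orthogonality in scale, which in this generality again forces Poincar\'e plus Riesz-transform input. A smaller inaccuracy: the restriction of exponents to $(s_-,s_+)$ is not only about $L^p$-boundedness of square functions, but also about the Riesz transform and the conditions $s\le r$, $\max(s,s_-)<\min(2,p)$ needed for the Poincar\'e/Fefferman--Stein step.
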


\mb As a consequence (with $q_1=p_2=\infty$ and $r=p=p_1=q_2$), we get the algebra property for $W^{\alpha,p}_L\cap L^{\infty}$, more precisely:

\begin{thm}\label{Sobalg2}Let $M$ be a complete Riemannian manifold satisfying the doubling property $(D)$  and a local Poincar\'e inequality $(P_{s,loc})$  for some $1\leq s<2$. Let $L$ a linear operator of type $w\in [0,\pi/2)$ satisfying Assumption (\ref{ass}). Let $\alpha\in [0,1)$ and  $p\in(\max(s,s_-),s_+)$. Then the space $W^{\alpha,p}_L\cap L^{\infty}$ is an algebra under the pointwise product. More precisely, for all $f,g\, \in W^{\alpha,p}_L\cap L^{\infty}$, one has $fg \in W^{\alpha,p}_L\cap L^{\infty}$  with
$$ \|fg\|_{W_L^{\alpha,p}} \lesssim \|f\|_{W_L^{\alpha,p}} \|g\|_{L^{\infty}} + \|f\|_{L^\infty} \|g\|_{W_L^{\alpha,p}}.$$
Moreover, if we assume a global Poincar\'e inequality $(P_s)$, the homogeneous Sobolev space  $\dot W^{\alpha,p}_L\cap L^{\infty}$ is an algebra under the pointwise product. More precisely, for all $f,\,g\, \in \dot W^{\alpha,p}_L\cap L^{\infty}$, then $fg \in \dot W^{\alpha,p}_L\cap L^{\infty}$  with
$$ \|L^{\alpha/m}(fg)\|_{L^p} \lesssim \|L^{\alpha/m}f\|_{L^{p}} \|g\|_{L^{\infty}} + \|f\|_{L^{\infty}} \|L^{\alpha/m}g\|_{L^{p}}.$$
\end{thm}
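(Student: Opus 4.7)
The plan is to derive Theorem \ref{Sobalg2} directly from Theorem \ref{Sobalg1} by specializing the exponents, so there is no new analytic content to produce beyond checking that the admissible range of parameters covers the desired endpoint case. Specifically, I would set
$$r = p_1 = q_2 = p, \qquad q_1 = p_2 = \infty,$$
and verify that these choices satisfy the hypotheses of Theorem \ref{Sobalg1}. The conjugate relation $\frac{1}{r} = \frac{1}{p_i}+\frac{1}{q_i}$ holds trivially since $\frac{1}{p}+\frac{1}{\infty}=\frac{1}{p}$. The requirement $r,p_1,q_2\in(s_-,s_+)$ reduces to $p\in(s_-,s_+)$, which is implied by the assumption $p\in(\max(s,s_-),s_+)$. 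The condition $q_1,p_2\in(s_-,\infty]$ is satisfied trivially by $\infty$. The inequalities $p_1\geq r$ and $q_2\geq r$ are equalities, while $q_1>r$ and $p_2>r$ hold since $\infty>p$. Finally, the assumption $s\leq r$ holds because $p>\max(s,s_-)\geq s$.

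Having verified the hypotheses, I would apply Theorem \ref{Sobalg1} to $f,g\in W^{\alpha,p}_L\cap L^\infty$ to obtain
$$\|fg\|_{W_L^{\alpha,p}} \lesssim \|f\|_{W_L^{\alpha,p}}\|g\|_{L^{\infty}} + \|f\|_{L^\infty}\|g\|_{W_L^{\alpha,p}},$$
which is the desired inhomogeneous estimate. Under the additional assumption of the global Poincar\'e inequality $(P_s)$, the second (homogeneous) inequality in Theorem \ref{Sobalg1} yields in the same way
$$\|L^{\alpha/m}(fg)\|_{L^p} \lesssim \|L^{\alpha/m}f\|_{L^p}\|g\|_{L^\infty}+\|f\|_{L^\infty}\|L^{\alpha/m}g\|_{L^p}.$$
The fact that $fg\in L^\infty$ (respectively $fg\in L^p$, needed to land in the correct function space) is immediate from $\|fg\|_{L^\infty}\leq\|f\|_{L^\infty}\|g\|_{L^\infty}$ and H\"older's inequality $\|fg\|_{L^p}\leq\|f\|_{L^p}\|g\|_{L^\infty}$.

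Since this deduction is purely a bookkeeping step, there is no genuine obstacle; the entire difficulty of the algebra property has been absorbed into the proof of Theorem \ref{Sobalg1}. The only mild point worth mentioning explicitly is that the Leibniz rule was stated under the constraints $p_1,q_2\in[r,\infty)$ and $q_1,p_2\in(r,\infty]$, so the endpoint $q_1=p_2=\infty$ lies in the allowed open range while $p_1=q_2=r$ lies at the allowed closed endpoint; this is precisely the configuration that produces an algebra result for $W^{\alpha,p}_L\cap L^\infty$.
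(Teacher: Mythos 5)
Your proposal is correct, and it is exactly the derivation the paper itself announces in the sentence preceding the theorem (``As a consequence (with $q_1=p_2=\infty$ and $r=p=p_1=q_2$), we get the algebra property for $W^{\alpha,p}_L\cap L^{\infty}$''). The parameter check is done cleanly: $r=p>\max(s,s_-)$ gives $r>1$ and $s\leq r$; $p\in(s_-,s_+)$ handles the conditions on $r,p_1,q_2$; and $q_1=p_2=\infty$ sits trivially in $(r,\infty]\cap(s_-,\infty]$.

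It is worth noting, however, that the paper explicitly states that although Theorems \ref{Sobalg2} and \ref{Sobalg3} are particular cases of Theorem \ref{Sobalg1}, it chooses to prove them ``using another method.'' The detailed argument given in Section 5 is therefore a genuinely \emph{different} proof: it relies on the characterization $\|L^{\alpha/m} f\|_{L^p} \simeq \|S_\alpha^\rho f\|_{L^p}$ from Theorem \ref{thm:caracterisation}, and then uses the elementary pointwise bound
\[
S_{\alpha}^\rho(fg)(x) \leq \|g\|_{L^\infty} S_{\alpha}^\rho(f)(x) + \|f\|_{L^\infty} S_{\alpha}^\rho(g)(x),
\]
obtained from $f(y)g(y)-f(x)g(x)=(f(y)-f(x))g(y)+f(x)(g(y)-g(x))$ together with $|g(y)|,|f(x)|\leq\|g\|_{L^\infty},\|f\|_{L^\infty}$. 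What the functional approach buys is an argument that does not pass through the paraproduct decomposition at all and adapts more readily to general nonlinearities $F(f)$ (as in Proposition \ref{prop:nonlinearity}), whereas your specialization of Theorem \ref{Sobalg1} is shorter but inherits the full machinery of Section 4. Both are valid; be aware that the paper regards the route you took as the ``immediate'' one and presents the functional route as the substantive alternative.
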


Consequently, we will deduce the following algebra property for Sobolev spaces:
 
\begin{thm}\label{Sobalg3}Let $M$ be a complete Riemannian manifold satisfying the doubling property $(D)$ and a local Poincar\'e inequality $(P_{s,loc})$ for some $1\leq s<2$. Let $L$ be a linear operator of type $w\in [0,\pi/2)$ satisfying Assumption (\ref{ass}).
Moreover, we assume that $M$ satisfies the  following  lower bound of the volume of small balls
\begin{equation}\tag{$MV_d$}
 \mu(B(x,r)) \gtrsim r^d,
\end{equation}
for all $0<r\leq 1$.
  Let $\alpha\in [0,1)$ and  $p\in(\max(s,s_-),s_+)$ such that $\alpha p>d$ where $d$ is the homogeneous dimension. Then the space $W^{\alpha,p}_L$ is included in $L^\infty$ and is an algebra under the pointwise product. More precisely, for all $f,\,g\, \in W^{\alpha,p}_L$, one has $fg \in W^{\alpha,p}_L$  with
$$ \|fg\|_{W_L^{\alpha,p}} \lesssim \|f\|_{W_L^{\alpha,p}} \|g\|_{W_L^{\alpha,p}}.$$
\end{thm}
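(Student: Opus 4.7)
The overall strategy is to derive Theorem \ref{Sobalg3} as a corollary of Theorem \ref{Sobalg2} by establishing the Sobolev embedding
$$ W^{\alpha,p}_L \hookrightarrow L^{\infty} $$
under the additional hypotheses $\alpha p > d$ and $(MV_d)$. Once this is done, applying Theorem \ref{Sobalg2} to $f,g \in W^{\alpha,p}_L \subseteq W^{\alpha,p}_L \cap L^{\infty}$ immediately yields
$$ \|fg\|_{W^{\alpha,p}_L} \lesssim \|f\|_{W^{\alpha,p}_L}\|g\|_{L^{\infty}} + \|f\|_{L^{\infty}}\|g\|_{W^{\alpha,p}_L} \lesssim \|f\|_{W^{\alpha,p}_L}\|g\|_{W^{\alpha,p}_L}, $$
which is the desired algebra inequality.

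To obtain the embedding, I would write $f=(1+L)^{-\alpha/m}h$ with $h:=(1+L)^{\alpha/m}f \in L^p$ (so that $\|h\|_{L^p} \sim \|f\|_{W^{\alpha,p}_L}$ by functional calculus) and invoke the subordination formula
$$ (1+L)^{-\alpha/m} = \frac{1}{\Gamma(\alpha/m)}\int_0^\infty t^{\alpha/m - 1}\, e^{-t}\, e^{-tL}\, dt. $$
This reduces the matter to an ultracontractive-type bound
$$ \|e^{-tL}h\|_{L^{\infty}} \lesssim t^{-d/(mp)}\,\|h\|_{L^p}, \qquad 0 < t \leq 1, $$
the tail $t \geq 1$ being absorbed by $e^{-t}$. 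Plugged into the subordination integral, the weight $t^{\alpha/m - 1}$ then produces an integrable singularity at $t=0$ precisely when $\alpha p > d$, giving $\|f\|_{L^{\infty}} \lesssim \|h\|_{L^p} \sim \|f\|_{W^{\alpha,p}_L}$.

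The main obstacle is this ultracontractive estimate, since Assumption \ref{ass} only provides off-diagonal $L^{q_0} \to L^{q_1}$ decays for $q_0,q_1 \in (s_-,s_+)$, and $\infty$ need not lie in this range. My plan is to decompose $e^{-tL}=(e^{-tL/N})^N$ for $N$ large and iterate the off-diagonal bounds along a finite chain $p=p_0 < p_1 < \cdots < p_N$ of exponents within $(s_-,s_+)$; at each step, the lower volume bound $(MV_d)$ converts the volume factor $\mu(B(x,t^{1/m}))^{1/p_{k+1}-1/p_k}$ arising from a covering argument into a power of $t$, and a final localization argument on balls of radius $t^{1/m}$, again based on $(MV_d)$, bridges the remaining gap from $L^{p_N}$ (with $p_N$ as large as desired below $s_+$) to $L^{\infty}$. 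Telescoping the exponents yields the global factor $t^{-d/(mp)}$. An alternative route, more in line with the functional machinery developed earlier in the paper, would be to exploit the square-function characterization of $W^{\alpha,p}_L$ to derive a Morrey-type pointwise control on $f$, from which $(MV_d)$ and $\alpha p > d$ directly force $f \in L^{\infty}$.
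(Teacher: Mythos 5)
Your overall strategy matches the paper's exactly: Theorem \ref{Sobalg3} is derived from Theorem \ref{Sobalg2} together with the Sobolev embedding $W^{\alpha,p}_L \hookrightarrow L^\infty$, which the paper proves via the same subordination formula (see the Proposition in Section 3.1 and Corollary \ref{cor:Li}). However, the ``main obstacle'' you identify is not actually present: Assumption \ref{ass} already postulates $L^{s_-}$--$L^\infty$ off-diagonal decay, namely $\|\mathcal{A}_z f\|_{L^\infty(B)} \lesssim \sum_{k\geq 0} 2^{-\delta k}\bigl(\aver{2^kB}|f|^{s_-}\,d\mu\bigr)^{1/s_-}$ for balls $B$ of radius $|z|^{1/m}$, so $\infty$ is squarely in range. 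Since $p > s_-$, H\"older on each annulus upgrades this to an $L^p \to L^\infty$ bound, and $(MV_d)$ turns the volume factor $\mu(B(x,t^{1/m}))^{-1/p}$ into $t^{-d/(mp)}$ for $t \leq 1$, yielding the ultracontractive estimate directly. The iterated-semigroup / telescoping-exponent scheme you propose would work but is superfluous, and the alternative square-function route you mention is also not needed here.
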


These two theorems are a particular case of Theorem \ref{Sobalg1}. Nevertheless, we will prove them using another method than that of the proof of Theorem \ref{Sobalg1}. Note also that the proof of the three Theorems is trivial when $\alpha=0$. So we will prove them for $\alpha>0$.

\bigskip

To finish, we also consider the case when $\alpha =1$:
\begin{thm}\label{Sobalg4} Let $M$ be a complete Riemannian manifold satisfying the doubling property $(D)$. Assume that the Riesz transform $\nabla L^{-1/m}$ is bounded on $ L^p$ for $p\in (s_-,s_+)$ and that we have the reverse Riesz inequalities
$$
\|L^{1/m}f\|_{L^p}\lesssim \|\nabla f\|_{L^p}$$
for all   $p\in (q_-,q_+)$.
Let  $1\leq q_-<r<q_+$. Let $p_1\in(s_-,s_+)$ with $p_1\geq r$, $q_1\in(r, \infty]$ and $q_2\in (s_-,s_+)$ with $q_2\geq r$, $p_2\in(r, \infty]$ verifying 
$$\frac{1}{r}=\frac{1}{p_i}+\frac{1}{q_i}.$$
Then for all $f\in W^{1,q_1}_L\cap L^{p_1}$, $g\in W^{1,q_2}_L\cap L^{p_2}$, $fg\in W^{1,r}_L$ with 
$$ \|fg\|_{W_L^{1,r}} \lesssim \|f\|_{W_L^{1,p_1}} \|g\|_{L^{q_1}} + \|f\|_{L^{p_2}} \|g\|_{W_L^{1,q_2}}.$$
Consequenly, for $p\in (\max(q_-,s_-),\min(q_+,s_+))$,  $W^{1,p}_L\cap L^{\infty}$ is an algebra under the pointwise product. If moreover, $p>d$ and $M$ satisfies $(MV_d)$, then the Sobolev space $W^{1,p}_L$ is also an algebra under the pointwise product.
\end{thm}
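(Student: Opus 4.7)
The strategy I would follow is to exploit the two Riesz-type hypotheses in order to transfer the whole problem from $L^{1/m}$ to the gradient $\nabla$, for which the classical pointwise Leibniz rule is available. Since $r\in(q_-,q_+)$, the reverse Riesz inequality gives
\begin{equation*}
\|L^{1/m}(fg)\|_{L^r} \lesssim \|\nabla(fg)\|_{L^r}.
\end{equation*}
The classical pointwise product rule $\nabla(fg)=f\nabla g+g\nabla f$ combined with Hölder's inequality (using $1/r=1/p_i+1/q_i$) yields
\begin{equation*}
\|\nabla(fg)\|_{L^r} \leq \|g\|_{L^{q_1}}\,\|\nabla f\|_{L^{p_1}} + \|f\|_{L^{p_2}}\,\|\nabla g\|_{L^{q_2}}.
\end{equation*}
Since $p_1,q_2\in(s_-,s_+)$, the $L^p$-boundedness of the Riesz transform $\nabla L^{-1/m}$ converts $\|\nabla f\|_{L^{p_1}}\lesssim \|L^{1/m}f\|_{L^{p_1}}$ and $\|\nabla g\|_{L^{q_2}}\lesssim \|L^{1/m}g\|_{L^{q_2}}$. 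Together with a direct Hölder bound $\|fg\|_{L^r}\leq \|f\|_{L^{p_1}}\|g\|_{L^{q_1}}$ to control the $L^r$ part of the non-homogeneous norm, this chain of inequalities establishes the Leibniz rule.

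Specializing to $p_1=q_2=p$ and $p_2=q_1=\infty$ gives immediately that $W^{1,p}_L\cap L^\infty$ is an algebra, provided $p$ lies simultaneously in $(q_-,q_+)$ (needed for reverse Riesz at $r=p$) and in $(s_-,s_+)$ (needed for Riesz transform boundedness at $p_1=q_2=p$), that is, $p\in(\max(q_-,s_-),\min(q_+,s_+))$. To upgrade this to an algebra property for $W^{1,p}_L$ itself under the additional assumptions $p>d$ and $(MV_d)$, I would establish a Sobolev–Morrey embedding $W^{1,p}_L\hookrightarrow L^\infty$; this makes the intersection with $L^\infty$ in the previous step automatic. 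The embedding itself can be obtained by writing $f=L^{-1/m}(L^{1/m}f)$ through a subordination formula expressing $L^{-1/m}$ as a time integral of the semigroup, decomposing the resulting kernel into annular pieces, and using Assumption \ref{ass} (off-diagonal decays) together with $(MV_d)$ and $p>d$ to produce a pointwise bound of $|f|$ by $\|L^{1/m}f\|_{L^p}$.

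The main obstacle is precisely this Sobolev–Morrey embedding: the Leibniz step is essentially routine once one has the two Riesz-type inequalities at one's disposal, but passing from $L^p$ information on $L^{1/m}f$ (or equivalently $\nabla f$) to a pointwise $L^\infty$ control requires a careful exploitation of the off-diagonal decays, since no pointwise kernel estimate for the semigroup is available. The $(MV_d)$ lower bound is crucial in summing the annular contributions when $p>d$.
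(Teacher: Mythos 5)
Your argument is exactly the paper's proof: reverse Riesz inequality at the exponent $r$, the pointwise Leibniz rule with H\"older, and then the $L^{p_1}$ and $L^{q_2}$ boundedness of the Riesz transform to return to $L^{1/m}$, with the algebra statements following by specializing the exponents. The final embedding $W^{1,p}_L\hookrightarrow L^\infty$ for $p>d$ under $(MV_d)$ is likewise handled in the paper (Corollary \ref{cor:Li}, proved via the semigroup subordination and off-diagonal decays, essentially as you sketch), so your proposal matches the paper's route.
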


These results are new comparing with \cite{CRT} for Riemannian manifolds with bounded geometry (see Example \ref{exlaplace}) and even in the Euclidean case when $L$ is for example an elliptic operator, appearing in Kato conjecture and whose functional spaces were introduced in \cite{hmm} (see Example \ref{exam}).

\begin{rem}
1. Assuming only the boundedness of the local Riesz tranform and its reverse inequalities, the non-homogeneous result  of Theorem \ref{Sobalg4} still holds. 
\\
2. Taking the usual Sobolev space defined by the gradient, when $\alpha=1$, Theorem \ref{Sobalg4} holds without any restriction on the exponents.  It suffices to use the Leibniz rule and  H\"older inequality. 
The assumptions on the Riesz transform and the reverse inequalities reduce the proof of Theorem \ref{Sobalg4}  to the usual Leibniz rules.
\end{rem}

\begin{proof}[Proof of Theorem \ref{Sobalg4}]
We have
\begin{align*}
\|L^{1/m}(fg)\|_{L^r}&\leq \|\nabla (fg)\|_{L^r}
\\
&\leq   \|\nabla f\,g\|_{L^r}+\|f\,\nabla g\|_{L^r}
\\
&\leq \|\nabla f\|_{L^{p_1}} \|g\|_{L^{q_1}}+\|f\|_{L^{p_2}}\|\nabla g\|_{L^{q_2}} 
\\
&
\leq \|L^{1/m} f\|_{L^{p_1}} \|g\|_{L^{q_1}}+\|f\|_{L^{p_2}}\|L^{1/m} g\|_{L^{q_2}}
\end{align*}
where in the first inequality, we used the $L^r$  reverse Riesz inequality  and in the last inequality we used the boundedness of the Riesz transform on $L^p$ for $p=p_1$ and $p=q_2$.
\end{proof}

Unfortunately, we are not able to have such a positive result when $\alpha>1$.   In this case, we need the boundedness of the iterated Riesz transforms which has not been studied until now on a general Riemannian manifold,  even when $L$ is the Laplace-Beltrami operator. We also describe results for such higher order Sobolev spaces in the context of sub-Riemannian structure (where a chain rule holds).

\bigskip

The plan of the paper is a follows. In section  \ref{sec:pre}, we recall the definitions of the hypotheses that we assume on our manifold and the linear operator $L$. We prove Sobolev embeddings for the generalized Sobolev spaces in section 3. Using a new point of view: the paraproducts, we prove Theorem \ref{Sobalg1} in section 4. Section 5 is devoted to the proof of Theorems \ref{Sobalg2} and \ref{Sobalg3} characterizing the Sobolev spaces using a representation formula in terms of first order differences. In Section 6, we will briefly describe extension to higher order Sobolev spaces under a sub-Riemannian structure and we will study how nonlinearities act on the Sobolev spaces. Finally, in section 7, we give applications of our result in PDE. We obtain well posedness result  for Schr\"odinger equations and also for heat equations associated to the operator $L$.

\section{Preliminaries} \label{sec:pre}

 For a ball $B$ in a metric space, $\lambda B$  denotes the ball co-centered with $B$ and with radius $\lambda$ times that of $B$. Finally, $C$ will be a constant that may change from an inequality to another and we will use $u\lesssim
v$ to say that there exists a constant $C$  such that $u\leq Cv$ and $u\simeq v$ to say that $u\lesssim v$ and $v\lesssim u$.

\mb In all this paper, $M$ denotes a complete Riemannian manifold. We write $\mu$ for the Riemannian measure on $M$, $\nabla$ for the
Riemannian gradient, $|\cdot|$ for the length on the tangent space (forgetting the subscript $x$ for simplicity) and
$\|\cdot\|_{L^p}$ for the norm on $ L^p:=L^{p}(M,\mu)$, $1 \leq p\leq +\infty.$  We denote by $B(x, r)$ the open ball of
center $x\in M $ and radius $r>0$.
We deal with the Sobolev spaces of order $1$, $W^{1,p}:=W^{1,p}(M)$, where the norm is defined by:
$$ \| f\|_{W^{1,p}(M)} : = \| f\|_{L^p}+\|\, |\nabla f|\,\|_{L^p}.$$
We write ${\mathcal S}(M)$ for the Schwartz space on the manifold $M$ and ${\mathcal S}'(M)$ for its dual, corresponding to the set of distributions. Moreover in all this work, ${\bf 1}={\bf 1}_M$ will be used for the constant function, equals to one on the whole manifold.

\subsection{The doubling property}
\begin{dfn}[Doubling property] Let $M$ be a Riemannian manifold. One says that $M$ satisfies the doubling property $(D)$ if there exists a constant $C_0>0$, such that for all $x\in M,\, r>0 $ we have
\begin{equation*}\tag{$D$}
\mu(B(x,2r))\leq C_0 \mu(B(x,r)).
\end{equation*}
\end{dfn}

\begin{lem} Let $M$ be a Riemannian manifold satisfying $(D)$ and let $d:=log_{2}C_0$. Then for all $x,\,y\in M$ and $\theta\geq 1$
\begin{equation}\label{eq:d}
\mu(B(x,\theta R))\leq C\theta^{d}\mu(B(x,R)).
\end{equation}
There also exists $c$ and $N\geq 0$, so that for all $x,y\in M$ and $r>0$
\be{eq:N} \mu(B(y,r)) \leq c\left(1+\frac{d(x,y)}{r} \right)^N \mu(B(x,r)). \ee
\end{lem}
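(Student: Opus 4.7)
The plan is to prove the two inequalities in turn, with the second being a quick consequence of the first applied to a cleverly enlarged ball.

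For the first inequality \eqref{eq:d}, I would iterate the doubling property $(D)$. Given $\theta \geq 1$, pick the integer $k$ with $2^{k-1} \leq \theta \leq 2^k$, that is, $k = \lceil \log_2 \theta \rceil$. Since $B(x,\theta R) \subset B(x, 2^k R)$, applying $(D)$ exactly $k$ times yields
$$\mu(B(x,\theta R)) \leq \mu(B(x, 2^k R)) \leq C_0^k \mu(B(x,R)).$$
Using $C_0 = 2^d$ and $2^k \leq 2\theta$, we get $C_0^k = 2^{kd} \leq 2^d \theta^d$, so the constant $C = 2^d$ works.

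For the second inequality \eqref{eq:N}, I would use the trivial inclusion $B(y,r) \subset B(x, r + d(x,y))$, which holds by the triangle inequality. Setting $\theta := 1 + d(x,y)/r \geq 1$, this says $B(y,r) \subset B(x, \theta r)$. Applying \eqref{eq:d} just proved,
$$\mu(B(y,r)) \leq \mu(B(x,\theta r)) \leq C \theta^d \mu(B(x,r)) = C \Bigl(1 + \frac{d(x,y)}{r}\Bigr)^d \mu(B(x,r)),$$
so the statement holds with $N = d$ and $c = C = 2^d$.

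There is no real obstacle here since both statements follow directly from iterating $(D)$; the only minor points to be careful about are the choice of the integer $k$ in terms of $\theta$ and the constants produced by this rounding, but these are entirely routine.
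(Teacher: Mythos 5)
Your proof is correct and is exactly the standard argument (the paper states this lemma without proof, as it is classical): iterating $(D)$ over $k=\lceil \log_2\theta\rceil$ dyadic steps gives \eqref{eq:d} with $C=2^d$, and the inclusion $B(y,r)\subset B(x,r+d(x,y))$ then yields \eqref{eq:N} with $N=d$ and $c=2^d$. Nothing is missing; note only that the statement allows a possibly smaller $N$ in special cases (e.g.\ $N=0$ in $\R^d$), but existence with $N=d$ is all that is claimed.
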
 
\noindent For example, if $M$ is the Euclidean space $M=\R^d$ then $N=0$ and $c=1$. \\
Observe that if $M$ satisfies $(D)$ then
$$ \textrm{diam}(M)<\infty\Leftrightarrow\,\mu(M)<\infty\,\textrm{ (see \cite{ambrosio1})}. $$
Therefore if $M$ is a non-compact complete Riemannian manifold satisfying $(D)$ then $\mu(M)=\infty$.

\begin{thm}[Maximal theorem]\label{MIT} (\cite{coifman2})
Let $M$ be a Riemannian manifold satisfying $(D)$. Denote by $\M$ the uncentered Hardy-Littlewood maximal function
over open balls of $M$ defined by
 $$ \M f(x):=\underset{\genfrac{}{}{0pt}{}{B \ \textrm{ball}}{x\in B}} {\sup} \ \frac{1}{\mu(B)}\int_{B} |f| d\mu.  $$
Then for every  $p\in(1,\infty]$, $\M$ is $L^p$-bounded and moreover of weak type $(1,1)$\footnote{ An operator $T$ is of weak type $(p,p)$ if there is $C>0$ such that for any $\alpha>0$, $\mu(\{x;\,|Tf(x)|>\alpha\})\leq \frac{C}{\alpha^p}\|f\|_p^p$.}.
\\
Consequently for $s\in(0,\infty)$, the operator $\M_{s}$ defined by
$$ \M_{s}f(x):=\left[\M(|f|^s)(x) \right]^{1/s} $$
is of weak type $(s,s)$ and $L^p$ bounded for all $p\in(s,\infty]$.
\end{thm}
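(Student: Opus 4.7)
The plan is to establish the two endpoint estimates for $\M$ (the trivial $L^\infty$ bound and the weak-type $(1,1)$ bound), then invoke Marcinkiewicz interpolation to get strong $L^p$ boundedness for $1<p<\infty$, and finally to deduce the statements about $\M_s$ from the identity $\M_s f = (\M(|f|^s))^{1/s}$, reducing everything to the corresponding bounds for $\M$.

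The $L^\infty$ bound is immediate: every ball-average of $|f|$ is dominated by $\|f\|_{L^\infty}$, so $\|\M f\|_{L^\infty}\leq \|f\|_{L^\infty}$. The core content is the weak-type $(1,1)$ estimate. Fix $\alpha>0$ and $f\in L^1$, and set $E_\alpha=\{x\in M:\M f(x)>\alpha\}$. For each $x\in E_\alpha$ one picks a ball $B_x\ni x$ with $\int_{B_x}|f|\,d\mu > \alpha\mu(B_x)$; in particular $\mu(B_x)<\alpha^{-1}\|f\|_{L^1}$. A Vitali-type selection, valid on any space of homogeneous type thanks to $(D)$, extracts a countable pairwise disjoint subfamily $\{B_i\}\subset\{B_x\}$ and a fixed dilation factor $\lambda>1$ with $E_\alpha\subset\bigcup_i\lambda B_i$. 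Doubling then gives $\mu(\lambda B_i)\lesssim\mu(B_i)$, so
\[
\mu(E_\alpha)\leq \sum_i\mu(\lambda B_i)\lesssim\sum_i\mu(B_i)\leq \alpha^{-1}\sum_i\int_{B_i}|f|\,d\mu\leq \alpha^{-1}\|f\|_{L^1}.
\]
Marcinkiewicz interpolation between this weak $(1,1)$ estimate and the strong $L^\infty$ bound yields strong $L^p$ boundedness for every $p\in(1,\infty)$.

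For $\M_s$ I use $\M_s f=(\M(|f|^s))^{1/s}$ to rewrite $\{\M_s f>\alpha\}=\{\M(|f|^s)>\alpha^s\}$; applying weak-type $(1,1)$ of $\M$ to $|f|^s\in L^1$ gives
\[
\mu(\{\M_s f>\alpha\})\lesssim \alpha^{-s}\||f|^s\|_{L^1}=\alpha^{-s}\|f\|_{L^s}^{s},
\]
which is exactly weak type $(s,s)$. For $p>s$, the $L^{p/s}$-boundedness of $\M$ applied to $|f|^s$ gives
\[
\|\M_s f\|_{L^p}^{p}=\|\M(|f|^s)\|_{L^{p/s}}^{p/s}\lesssim \||f|^s\|_{L^{p/s}}^{p/s}=\|f\|_{L^p}^{p}.
\]

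The main obstacle is justifying the Vitali-type selection for uncentered balls on a general doubling space: one must control the radii of the selected $B_i$ (only their measures are a priori bounded by $\alpha^{-1}\|f\|_{L^1}$, and the measure-to-radius relation via (\ref{eq:N}) must be used with care in the non-doubling-of-radius direction) and identify a dilation factor $\lambda$ depending only on $C_0$. Once the covering lemma is secured, the remainder is routine: interpolation and a change of variable $|f|\mapsto |f|^s$. An alternative would be a Calder\'on--Zygmund-type stopping-time decomposition, which bypasses the covering lemma but replaces it by a comparable maximality argument.
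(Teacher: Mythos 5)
The paper does not prove this theorem at all: it is quoted as a classical result with a reference to Coifman--Weiss \cite{coifman2}, so there is no internal proof to compare with. Your argument is exactly the standard one behind that citation ($L^\infty$ bound, weak $(1,1)$ via a covering lemma plus doubling, Marcinkiewicz interpolation, and the substitution $|f|\mapsto |f|^s$ for $\M_s$), and the reductions for $\M_s$ and the interpolation step are carried out correctly.

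The only place where your write-up is not yet a proof is the point you yourself flag: the Vitali-type selection applied to the \emph{whole} family $\{B_x\}_{x\in E_\alpha}$ is not legitimate as stated, because the basic $5r$-covering lemma for infinite families requires a uniform upper bound on the radii, and the bound $\mu(B_x)<\alpha^{-1}\|f\|_{L^1}$ does not control radii on a general doubling manifold (balls of huge radius can have small measure, e.g.\ when $\mu(M)<\infty$); trying to extract a radius bound from \eqref{eq:N} goes in the wrong direction and is not needed. The standard fixes are either of the following. (i) Truncate: for $R>0$ let $\M^{(R)}$ be the maximal function over balls of radius at most $R$; for this operator the $5r$-covering lemma applies, the constant in the weak $(1,1)$ bound depends only on $C_0$, and monotone convergence as $R\to\infty$ gives the bound for $\M$. (ii) Use that each chosen ball $B_x$ with $\aver{B_x}|f|\,d\mu>\alpha$ is entirely contained in $E_\alpha$ (uncentered maximal function), so $E_\alpha=\bigcup_x B_x$; by inner regularity of the Riemannian measure pick a compact $K\subset E_\alpha$, cover it by finitely many $B_{x}$, run the finite greedy Vitali selection (largest radius first), which needs no radius bound, to get disjoint $B_i$ with $K\subset\bigcup_i 3B_i$, then use $(D)$ and let $K$ exhaust $E_\alpha$. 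With either device inserted, your proof is complete and is the same argument as in \cite{coifman2}.
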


\subsection{Poincar\'e inequality}
\begin{dfn}[Poincar\'{e} inequality on $M$] \label{classP} We say that a complete Riemannian manifold $M$ admits a local Poincar\'{e} inequality $(P_{q})$ for some $q\in[1,\infty)$ if there exists a constant $C>0$ such that, for every function $f\in W^{1,q}_{loc}(M)$ (the set of compactly supported Lipschitz functions on $M$) and every ball $B$ of $M$ of radius $0<r\leq 1$, we have
\begin{equation*}\tag{$P_{qloc}$}
\left(\aver{B}\left|f- \aver{B}f d\mu \right|^{q} d\mu\right)^{1/q} \leq C r \left(\aver{B}|\nabla f|^{q}d\mu\right)^{1/q}.
\end{equation*}
And we say that $M$ admits a global Poincar\'e inequality $(P_q)$ if this inequality holds for all balls $B$ of $M$.
\end{dfn}

Let us recall some known facts about Poincar\'{e} inequalities with varying $q$.
 \\
It is known that $(P_{q})$ implies $(P_{p})$ when $p\geq q$ (see \cite{hajlasz4}). Thus, if the set of $q$ such that
$(P_{q})$ holds is not empty, then it is an interval unbounded on the right. A recent result of S. Keith and X. Zhong
(see \cite{KZ}) asserts that this interval is open in $[1,+\infty[$~:

\begin{thm} \label{thm:kz} Let $(M,d,\mu)$ be a doubling and complete Riemannian manifold, admitting a Poincar\'{e} inequality $(P_{q})$, for  some $1< q<\infty$.
Then there exists $\epsilon >0$ such that $(M,d,\mu)$ admits
$(P_{p})$ for every $p>q-\epsilon$.
\end{thm}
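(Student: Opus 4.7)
The plan is to deduce from $(P_q)$ a weak reverse H\"older inequality for $|\nabla f|$ on balls, and to unfold this higher-integrability gain, via a Haj\l{}asz-type pointwise bound, into a Poincar\'e inequality with a slightly smaller exponent.

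First, I would apply a standard chaining argument: iterating $(P_q)$ along a sequence of dyadic sub-balls joining two arbitrary points $x,y \in B$, and exploiting $(D)$, yields a pointwise bound of Haj\l{}asz type
\[|f(x)-f(y)| \lesssim d(x,y)\left[\M(|\nabla f|^{q}\mathbf{1}_{\lambda B})(x)^{1/q} + \M(|\nabla f|^{q}\mathbf{1}_{\lambda B})(y)^{1/q}\right], \qquad x,y \in B,\]
for some structural dilation $\lambda > 1$, where $\M$ is the Hardy-Littlewood maximal operator of Theorem \ref{MIT}. Since $q>1$, $\M$ is strongly bounded on $L^{q/(q-\epsilon)}$ for small $\epsilon>0$, so any reverse H\"older self-improvement for $|\nabla f|$ will translate into a Poincar\'e inequality at exponent $q-\epsilon$.

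The core of the argument is the weak reverse H\"older estimate: one seeks $\epsilon>0$, depending only on $q$ and the structural constants, such that for every ball $B$ of radius $\leq 1$,
\[\left(\aver{B} |\nabla f|^q\, d\mu\right)^{1/q} \lesssim \left(\aver{\lambda B} |\nabla f|^{q-\epsilon}\, d\mu\right)^{1/(q-\epsilon)}.\]
I would establish this by a stopping-time / good-$\lambda$ argument on $g := |\nabla f|$. Localize to $B$, introduce for large $t$ the level set $E_t := \{x \in B : \M(g^{q-\epsilon}\mathbf{1}_{\lambda B})(x)^{1/(q-\epsilon)} > t\}$, Whitney-decompose $E_t$ into maximal disjoint sub-balls $B_i$ on which the $(q-\epsilon)$-mean of $g$ is of order $t$, and apply $(P_q)$ on each dilate $\lambda B_i$ to estimate the $q$-energy of $g$ restricted to $E_t$. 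Integrating the resulting distributional estimate against the weight $t^{\epsilon-1}\,dt$ yields a reverse inequality whose ``bad'' contribution carries a multiplicative constant that shrinks with $\epsilon$, and can therefore be absorbed into the left-hand side.

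Finally, a local Gehring lemma, valid in doubling metric measure spaces, promotes this weak reverse H\"older bound to genuine higher integrability of $|\nabla f|^{q-\epsilon}$, and reinserting the gain into the Haj\l{}asz-type pointwise bound produces $(P_p)$ for every $p > q-\epsilon'$ with some $\epsilon' > 0$. The principal obstacle is the absorption step in the good-$\lambda$ argument: one has to calibrate $\epsilon$ and $\lambda$ so that the bad-set constant is strictly less than $1$. This calibration hinges on the strong (not merely weak-type) $L^{q/(q-\epsilon)}$-boundedness of $\M$, which degenerates as $q \downarrow 1$; this is why Theorem \ref{thm:kz} excludes the endpoint $q=1$.
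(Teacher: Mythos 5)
The paper does not prove Theorem~\ref{thm:kz}; it simply quotes it from Keith--Zhong \cite{KZ}, whose proof occupies a full-length Annals paper. So the only question is whether your sketch could be completed, and it cannot, because its central step is false.

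The weak reverse H\"older inequality you propose to extract from $(P_q)$, namely
\[
\left(\aver{B} |\nabla f|^q\, d\mu\right)^{1/q} \lesssim \left(\aver{\lambda B} |\nabla f|^{q-\epsilon}\, d\mu\right)^{1/(q-\epsilon)} \quad\text{for all } f\in W^{1,q}_{loc} \text{ and all balls } B,
\]
is not a consequence of $(P_q)$ and is in fact false. The Poincar\'e inequality controls the oscillation of $f$ by an integral of $|\nabla f|$; it imposes no constraint on the local self-improving behaviour of $|\nabla f|$, which, as a function, can be essentially arbitrary in $L^q_{loc}$. Already on $M=\mathbb{R}$ (which satisfies $(P_1)$) with $B=(-1,1)$: take $f_\alpha(x)=|x|^\alpha$, so that $|f_\alpha'(x)|=\alpha|x|^{\alpha-1}$. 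As $\alpha\downarrow 1-\tfrac{1}{q}$ one has $\aver{B}|f_\alpha'|^q\to\infty$, while $\aver{\lambda B}|f_\alpha'|^{q-\epsilon}$ stays bounded for every fixed $\epsilon>0$ because $1-\tfrac{1}{q-\epsilon}<1-\tfrac{1}{q}$. Hence no good-$\lambda$ or stopping-time scheme, however calibrated, can produce this estimate from $(P_q)$ alone. Note also an internal redundancy in your plan: if the displayed reverse H\"older did hold, you would already get $(P_{q-\epsilon})$ by chaining $(P_q)$ with it on the right-hand side, so the subsequent Gehring step would be superfluous --- a further sign that this is not the route.

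The genuine difficulty, and the reason the theorem resisted for so long, is precisely that the self-improvement occurs at the level of the Poincar\'e inequality and not at the level of the gradient: no higher integrability of $|\nabla f|$ is asserted or used, so the argument cannot be cast as a Gehring lemma for $|\nabla f|$. Your first step --- the Haj\l{}asz-type pointwise bound $|f(x)-f(y)|\lesssim d(x,y)\bigl[\M(|\nabla f|^q\mathbf{1}_{\lambda B})(x)^{1/q}+\M(|\nabla f|^q\mathbf{1}_{\lambda B})(y)^{1/q}\bigr]$ obtained by chaining and doubling --- is correct and is indeed an ingredient. But Keith and Zhong then run a substantially more delicate iteration, exchanging scales and exponents in a way that improves the exponent in the Poincar\'e inequality itself, with an absorption constant governed by the doubling constant and the constant in $(P_q)$ rather than by any (nonexistent) reverse H\"older gain for $|\nabla f|$. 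Your heuristic for excluding $q=1$ --- degeneration of the strong $L^{q/(q-\epsilon)}$-bound for $\M$ --- is in the right spirit but is attached to the wrong mechanism.
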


\subsection{Framework for semigroup of operators} \label{subsec:semigroup}

Let us recall the framework of \cite{DY1, DY}. \\
Let $\omega \in[0,\pi/2)$. We define the closed sector in the complex plane ${\mathbb C}$ by
$$ S_\omega:= \{z\in\C,\ |\textrm{arg}(z)|\leq \omega\} \cup\{0\}$$
and denote the interior of $S_\omega$ by $S_\omega^0$.
We set $H_\infty(S^0_\omega)$ for the set of bounded holomorphic functions $b$ on $S_\omega^0$, equipped with the norm
$$ \|b\|_{H_\infty(S_\omega^0)} := \|b\|_{L^\infty(S_\omega^0)}.$$
Then consider a linear operator $L$. It is said of type $\omega$ if its spectrum $\sigma(L)\subset S_\omega$ and for each $\nu>\omega$, there exists a constant $c_\nu$ such that
$$ \left\|(L-\lambda)^{-1} \right\|_{L^2\to L^2} \leq c_\nu |\lambda|^{-1}$$
for all $\lambda\notin S_\nu$.

\mb We refer the reader to \cite{DY1} and \cite{Mc} for more details concerning holomorphic calculus of such operators. In particular, it is well-known that $L$ generates a holomorphic semigroup $(\A_z:=e^{-zL})_{z\in S_{\pi/2-\omega}}$. Let us detail now some assumptions, we make on the semigroup.

\begin{ass} \label{ass}Assume the following conditions: there exist a positive real $m>1$,  exponents $s_-<2<s_+$ and $\delta>1$ with 
\begin{itemize}
 \item For every $z\in S_{\pi/2-\omega}$, the linear operator $\A_z:=e^{-zL}$ satisfies $L^{s_-}-L^\infty$ off-diagonal decay: for all $z$ and ball $B$ of radius $|z|^{1/m}$
 \be{eq:off-diag} \left\|\A_z(f)\right\|_{L^\infty(B)} \lesssim \sum_{k\geq 0} 2^{-\delta k} \left(\aver{2^k B} |f|^{s_-} d\mu \right)^{1/s_-}. \ee
 \item The operator $L$ has a bounded $H_\infty$-calculus on $L^2$. That is, there exists $c_\nu$ such that for $b\in H_\infty(S^0_\nu)$, we can define $b(L)$ as a $L^2$-bounded linear operator and
\be{eq:holocal} \|b(L)\|_{L^2\to L^2} \leq c_\nu \|b\|_{L^\infty}. \ee 
\item The Riesz transform ${\mathcal R}:=\nabla L^{-1/m}$ is bounded on $L^p$ for every $p\in(s_-,s_+)$.
\item For every $t>0$, $e^{-tL}({\bf 1})={\bf 1}$ or equivalently $L({\bf 1})=0$.
\end{itemize}
\end{ass}

\begin{rem} \label{rem:holo} The assumed bounded $H_\infty$-calculus on $L^2$ allows us to deduce some extra properties (see \cite{DY} and \cite{Mc})~:
\begin{itemize}
 \item Due to the Cauchy formula for complex differentiation, pointwise estimate (\ref{eq:off-diag}) still holds for the differentiated semigroup $(tL)^k e^{-tL}$ for every $k\in {\mathbb N}$.
 \item For any holomorphic function $\psi \in H_\infty(S_\nu^0)$ such that for some $s>0$ and for all $z\in S_\nu^0$, $ |\psi(z)|\lesssim \frac{|z|^s}{1+|z|^{2s}},$
the quadratic functional 
\begin{equation} \label{eq:func1} f \rightarrow \left( \int_0^\infty \left|\psi(tL) f \right|^2 \frac{dt}{t} \right)^{1/2} \end{equation}
is $L^2$-bounded.
\item In addition, the Riesz transform is supposed to be bounded in $L^2$ so the following quadratic functionals
\begin{equation} \label{eq:func2} f \rightarrow \left( \int_0^\infty \left| t^{1/m} \nabla \phi(tL) f \right|^2 \frac{dt}{t} \right)^{1/2} \end{equation}
are $L^2$-bounded for any holomorphic function $\phi \in H_{\infty}(S_\nu^0)$ such that for some $s>0$, $ |\phi(z)|\lesssim (1+|z|)^{-2s}.$
\end{itemize}
\end{rem}

\begin{ex} \label{exlaplace} In the case of a doubling Riemannian manifold satisfying Poincar\'e inequality $(P_2)$ and with  $L=-\Delta$ the non-negative Laplacian, then it is well-known (\cite{grigo,saloff-coste}) that heat kernel satisfies pointwise estimates and Assumption (\ref{ass}) also holds with $s_-=1$ (\cite{CD}) and $s_+>2$ (\cite{AC}).
\end{ex}

\begin{ex} \label{exam} Consider a homogeneous elliptic operator $L$ of order $m=2k$ in $\R^d$ defined by
$$ L(f) = (-1)^k \sum_{|\alpha|=|\beta|=k} \partial^\alpha (a_{\alpha\beta} \partial^\beta f),$$
with bounded complex coefficients $a_{\alpha\beta}$.
\begin{itemize}
\item If the coefficients are real-valued  then Gaussian estimates for the heat semigroup hold and Assumption (\ref{ass}) is also satisfied (see Theorem 4 in \cite{Asterisque}).

\item If the coefficients are complex and $d\leq 2k=m$ then the heat kernel satisfies pointwise estimates and so assumption (\ref{ass}) is satisfied for some exponents $s_-,s_+$. We refer the reader to Section 7.2 in \cite{A} for more details. We just point out that, using the interpolation of domains of powers of $L$, the $L^p$ boundedness of $\nabla L^{-1/m}$ for $p\in(s_-,s_+)$ is implied by the $L^p$ boundedness of $\nabla^k L^{-1/2}$ for $p\in (q_-,q_+)$ with $s_-=q_-/m$ and $s_+=(1-1/m)+q_+/m$.

\item Moreover, if the matrix-valued map $A$ is H\"older continuous, then the heat kernel and its gradient admit Gaussian pointwise estimates and so Assumption (\ref{ass}) is satisfied with $s_-=1$ (see \cite{AMT,AMT2}).
\end{itemize}
\end{ex}

\begin{prop} \label{prop:Lp} Under the above assumptions, and since the Riesz transform ${\mathcal R}:=\nabla L^{-1/m}$ is $L^p$ bounded for $p\in (s_-,s_+)$, then the square functionals in (\ref{eq:func1}) and (\ref{eq:func2}) are also bounded in $L^p$ for all $p\in(s_-,s_+)$. Moreover the functionals in (\ref{eq:func1}) are bounded in $L^p$ for every $p\in(s_-,\infty)$.
 \end{prop}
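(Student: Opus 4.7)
The plan is to extend the $L^2$-boundedness of both square functionals (which follows from the bounded $H_\infty$-calculus of $L$, as recalled in Remark \ref{rem:holo}) to the full $L^p$ range by Calder\'on--Zygmund extrapolation methods adapted to operators with only off-diagonal decays: Blunck--Kunstmann-type for $p<2$ and an Auscher-type criterion for $p>2$. The gradient functional (\ref{eq:func2}) will then be reduced to the scalar one (\ref{eq:func1}) via the $L^p$-boundedness of the Riesz transform $\mathcal{R}:=\nabla L^{-1/m}$.

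First, set $T_\psi f := (\int_0^\infty |\psi(tL)f|^2\,dt/t)^{1/2}$. For $p\in(s_-,2)$, I would apply the Blunck--Kunstmann extrapolation using the approximation $A_r := I-(I-e^{-rL})^N$ for $N$ large enough; this reduces matters to proving an annular estimate: for every ball $B$ of radius $r^{1/m}$ and every $f$ supported in $C_j := 2^{j+1}B\setminus 2^j B$, $j\geq 1$,
\[
\Bigl(\aver{B}|T_\psi(I-A_r)f|^2\Bigr)^{1/2} \lesssim 2^{-\delta' j}\Bigl(\aver{C_j}|f|^{s_-}\Bigr)^{1/s_-}
\]
with $\delta'$ arbitrarily large. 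Expanding $(I-e^{-rL})^N=\sum_{k=1}^N c_k e^{-krL}$ and using the integral representation of $\psi$ via $e^{-zL}$, this is obtained by transferring the $L^{s_-}$--$L^\infty$ (and, by interpolation, $L^{s_-}$--$L^2$) off-diagonal decay of (\ref{eq:off-diag}) (valid also for $(zL)^k e^{-zL}$, cf.\ Remark \ref{rem:holo}) to $\psi(tL)e^{-krL}$, exploiting the cancellation $(tL)^s$ carried by $\psi$ when $t\leq r$ and the factor $(rL)^N$ effectively carried by $(I-e^{-rL})^N$ when $t\geq r$, followed by integration in $t$. For $p\in[2,\infty)$, I would apply the dual Auscher-type criterion, which reduces the $L^p$-boundedness of $T_\psi$ to an analogous $L^2$--$L^q$ off-diagonal bound on $T_\psi A_r$ with $q$ large, obtained in exactly the same manner.

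For the gradient functional, pick $s'\in(0,\min(1/m,\,2s-1/m))$ and set $\psi(z):=z^{1/m}\phi(z)$, so that $|\psi(z)|\lesssim |z|^{s'}/(1+|z|^{2s'})$ and $t^{1/m}\nabla\phi(tL)f=\mathcal{R}\psi(tL)f$. A Rademacher/Khintchine randomization, combined with the linearity of $\mathcal{R}$ and its $L^p$-boundedness for $p\in(s_-,s_+)$ from Assumption \ref{ass}, yields
\[
\Bigl\|\Bigl(\int_0^\infty|\mathcal{R}\psi(tL)f|^2\,\tfrac{dt}{t}\Bigr)^{1/2}\Bigr\|_{L^p}\lesssim\|\mathcal{R}\|_{L^p\to L^p}\Bigl\|\Bigl(\int_0^\infty|\psi(tL)f|^2\,\tfrac{dt}{t}\Bigr)^{1/2}\Bigr\|_{L^p},
\]
and combining with the scalar estimate closes the proof.

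The main obstacle is the verification of the annular off-diagonal bound on $T_\psi(I-A_r)$: the operator $\psi(tL)(I-e^{-rL})^N$ must carry \emph{both} cancellations simultaneously, $(rL)^N$ at large $t$ and $(tL)^s$ at small $t$, so that after integration over $dt/t$ the resulting bound is a summable power $2^{-\delta' j}$ with the exponent required by the extrapolation. Once this is in place, the rest of the argument is routine bookkeeping.
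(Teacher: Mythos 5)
Your argument is essentially the paper's: it too starts from the $L^2$ bound given by the $H_\infty$-calculus, extrapolates to $p\in(s_-,2)$ by splitting with the semigroup approximation (the paper's $B_t=I-e^{-tL}$ is your $I-A_r$ with $N=1$) together with the off-diagonal decay and $\mathcal{M}_{s_-}$ control, invoking the interpolation machinery of \cite{BZ}, and reduces (\ref{eq:func2}) to (\ref{eq:func1}) by writing $t^{1/m}\nabla\phi(tL)=\mathcal{R}\psi(tL)$ with $\psi(z)=z^{1/m}\phi(z)$ and using the $\ell^2$-valued (Khintchine) extension of the $L^p$-bounded Riesz transform. The only cosmetic difference is for $p>2$, where the paper argues by duality while you use the direct Auscher-type criterion; both are standard and rest on the same off-diagonal estimates.
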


\begin{proof} Let $T$ be one of the square functions in (\ref{eq:func1}). We already know that it is $L^2$ bounded, by holomorphic functional calculus.
Then consider the ``oscillation operator'' at the scale $t$: 
$$B_t:=1-{\mathcal A}_t=1-e^{-tL}=-\int_0^tL e^{-sL} ds. $$
Then, by using differentiation of the semigroup, it is classical that $T B_t$ satisfies $L^2-L^2$ off-diagonal decay at the scale $t^{1/m}$, since the semigroup $e^{-tL}$ is bounded by Hardy Littlewood maximal function ${\mathcal M}_{s_-}$. So we can apply interpolation theory (see \cite{BZ} for a very general exposition of such arguments) and prove that $T$ is bounded on $L^p$ for every $p\in(s_-,2]$ (and then for $p\in[2,\infty)$ by applying a similar reasoning with the dual operators). \\
Then consider a square function $U$ of type (\ref{eq:func2}). Then by using the Riesz transform, it yields
$$ U(f) =\left( \int_0^\infty \left| {\mathcal R} \psi(tL) f \right|^2 \frac{dt}{t} \right)^{1/2}$$
with $\psi(z)=z^{1/m}\phi(z)$. Since ${\mathcal R}$ is supposed to be $L^p$-bounded, it verifies $\ell^2$-valued inequalities and so the $L^p$-boundedness of $U$ is reduced to the one of a square functional of type (\ref{eq:func1}), which was before proved.
\end{proof}

\section{Generalized Sobolev spaces}
For the definition, we refer the reader to the introduction.

\begin{prop} \label{prop:equivalence} For all $p\in (s_-,\infty)$ and $s\in(0,1)$, we have the following equivalence
$$ \|f\|_{L^p} + \| L^{s/m}(f)\|_{L^p} \simeq \|(1+L)^{s/m} f \|_{L^p}.$$
\end{prop}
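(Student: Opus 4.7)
The plan is to deduce both inequalities from the $L^p$-boundedness of two explicit bounded holomorphic functions of $L$. Setting $\alpha:=s/m\in(0,1)$, I would introduce the two ``ratio multipliers''
\[
\phi(z):=\frac{(1+z)^\alpha}{1+z^\alpha},\qquad \psi(z):=\frac{1+z^\alpha}{(1+z)^\alpha}
\]
(principal branches of $(1+z)^\alpha$ and $z^\alpha$), and first check that $\phi,\psi\in H_\infty(S_\nu^0)$ for some $\nu\in(\omega,\pi)$. Indeed the only possible zeros of $1+z^\alpha$ lie at $|\arg z|=\pi/\alpha>\pi$, hence outside $S_\nu^0$, and both $\phi$ and $\psi$ tend to $1$ at $0$ and at $\infty$. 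On the operator level, the bounded $H_\infty$-calculus from Assumption~\ref{ass} furnishes the two identities
\[
(1+L)^{s/m}=\phi(L)\,(I+L^{s/m}),\qquad I+L^{s/m}=\psi(L)\,(1+L)^{s/m}
\]
on a dense subspace of $L^2$, so once $\phi(L)$ and $\psi(L)$ are shown to be bounded on $L^p$ for $p\in(s_-,\infty)$, the equivalence follows immediately.

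To establish the $L^p$-boundedness, I would write $\phi=1+\widetilde\phi$, $\psi=1+\widetilde\psi$ and check, by Taylor expansions at $0$ and at $\infty$, the key decay
\[
|\widetilde\phi(z)|+|\widetilde\psi(z)|\ \lesssim\ \frac{|z|^\alpha}{1+|z|^{2\alpha}},\qquad z\in S_\nu^0,
\]
which is exactly the decay appearing in \eqref{eq:func1}. Via a Mellin-type inversion of $\widetilde\phi$ along $\partial S_\nu^0$, one can then represent
\[
\widetilde\phi(L)=\int_0^\infty K(t)\,(tL)e^{-tL}\,\frac{dt}{t}
\]
with a kernel $K\in L^1(\mathbb R_+,dt/t)$. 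Combining this representation with the uniform $L^p$-boundedness of $(tL)e^{-tL}$, which follows from the off-diagonal estimate \eqref{eq:off-diag} together with the maximal theorem (exactly as in the proof of Proposition~\ref{prop:Lp}), gives that $\widetilde\phi(L)$, and hence $\phi(L)$, is bounded on $L^p$ for every $p\in(s_-,\infty)$; the argument for $\widetilde\psi(L)$ is identical.

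The main obstacle I anticipate is this $L^p$ extension: Assumption~\ref{ass} only provides bounded $H_\infty$-calculus on $L^2$, and passing to $L^p$ relies on the off-diagonal decay of the semigroup. What makes the argument work is that $\widetilde\phi$ and $\widetilde\psi$ decay both at $0$ \emph{and} at infinity, which is exactly what allows the Mellin representation above and the reuse of the machinery already developed for Proposition~\ref{prop:Lp}; if either decay failed, the $L^p$ extension would not be accessible from the assumed hypotheses alone.
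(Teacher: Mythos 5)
Your strategy is genuinely different from the paper's. The paper expresses $(1+L)^{\alpha}f$ (resp.\ $L^u(1+L)^{-\alpha}$) directly as a semigroup integral, splits it into a piece controlled by the uniform $L^p$-boundedness of $e^{-tL}$ and a piece handled by duality against two vertical square functions (invoking Proposition~\ref{prop:Lp}); you instead package everything into the explicit multipliers $\phi$ and $\psi$ and try to show they act on $L^p$.

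The argument has a real gap in the second half. Your identity $I+L^{s/m}=\psi(L)(1+L)^{s/m}$ together with $L^p$-boundedness of $\psi(L)$ yields only
\[
\bigl\|(I+L^{s/m})f\bigr\|_{L^p}\lesssim \bigl\|(1+L)^{s/m}f\bigr\|_{L^p},
\]
and the triangle inequality goes the wrong way here: $\|(I+L^{s/m})f\|_{L^p}\le \|f\|_{L^p}+\|L^{s/m}f\|_{L^p}$, not the other way around. So boundedness of $\phi(L)$ and $\psi(L)$ gives $\|(1+L)^{s/m}f\|_{L^p}\simeq\|(I+L^{s/m})f\|_{L^p}$, which is \emph{not} the claimed statement; outside of $L^2$ (where accretivity of $I+L^{s/m}$ saves you) there is no automatic passage from a bound on the sum $f+L^{s/m}f$ to separate bounds on $f$ and $L^{s/m}f$. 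To fix this you would need to bound two further multipliers applied to $(1+L)^{s/m}f$: namely $(1+z)^{-s/m}$ to recover $\|f\|_{L^p}$, and $z^{s/m}(1+z)^{-s/m}$ to recover $\|L^{s/m}f\|_{L^p}$. Neither of these vanishes at \emph{both} $0$ and $\infty$ (the first tends to $1$ at $0$, the second tends to $1$ at $\infty$), so your ``decay at both ends plus Mellin representation against $(tL)e^{-tL}$'' machinery does not apply to them directly; you would have to peel off a constant and then treat the remainder, or revert to a direct semigroup representation as the paper does (e.g.\ $(1+L)^{-\alpha}=\Gamma(\alpha)^{-1}\int_0^\infty t^\alpha e^{-t}e^{-tL}\,dt/t$ uses only uniform $L^p$-boundedness of $e^{-tL}$).

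A more minor point: the Mellin-type representation $\widetilde\phi(L)=\int_0^\infty K(t)(tL)e^{-tL}\,dt/t$ with $K\in L^1(dt/t)$ is asserted, not proved. It is believable here because $\widetilde\phi$ extends holomorphically to $S_\nu^0$ with $\nu$ arbitrarily close to $\pi$, which makes its Mellin transform decay fast enough to beat the $e^{\pi|\xi|/2}$ growth of $1/\Gamma(1-i\xi)$; but this needs to be checked, and the argument fails for functions holomorphic only on a thin sector. In this paper's setting it is cleaner to invoke the square-function mechanism of Proposition~\ref{prop:Lp} directly (as the paper does) rather than route through a kernel representation.
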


\begin{proof} Set $\alpha=s/m$. We decompose $(1+L)^{\alpha}$ with the semigroup as following
\begin{align*}
 (1+L)^{\alpha} f & = \int_0^\infty e^{-t} e^{-tL} (1+L)(f) t^{1-\alpha} \frac{dt}{t} \\
 & = \int_0^\infty e^{-t} e^{-tL}(f) \frac{dt}{t^\alpha} + \int_0^\infty e^{-t} e^{-tL} (tL)^{1-\alpha}( L^\alpha f) \frac{dt}{t}.
\end{align*}
Since $e^{-tL}$  is uniformly bounded on $L^p$ (due to the off-diagonal decay), the $L^p$-norm of the first term is easily bounded by $\|f\|_{L^p}$. The second term is bounded by duality : indeed
\begin{align*}
 & \langle \int_0^\infty e^{-t} e^{-tL} (tL)^{1-\alpha}( L^\alpha f) \frac{dt}{t},g \rangle  =  \int_0^\infty e^{-t} \langle e^{-tL/2} (tL)^{\frac{1-\alpha}{2}}( L^\alpha f), e^{-tL^*/2} (tL^*)^{\frac{1-\alpha}{2}} g \rangle \frac{dt}{t}  \\
 & \hspace{1cm} \leq \int \left(\int_0^\infty \left|e^{-tL/2} (tL)^{\frac{1-\alpha}{2}}( L^\alpha f)\right|^2 \frac{dt}{t}\right)^{1/2} \left(\int_0^\infty \left|e^{-tL^*/2} (tL^*)^{\frac{1-\alpha}{2}}(g)\right|^2 \frac{dt}{t}\right)^{1/2} d\mu.
\end{align*}
Since $(1-\alpha)/2>0$, then the two square functionals are bounded in $L^p$ and $L^{p'}$ (by Proposition \ref{prop:Lp}) and that concludes the proof of
$$ \|(1+L)^{\alpha} f \|_{L^p} \lesssim \|f\|_{L^p} + \| L^{\alpha}(f)\|_{L^p}.$$
Let us now check the reverse inequality. As previously, for $u=0$ or $u=\alpha$ we write
$$ L^u f = \int_0^\infty e^{-t(1+L)} (1+L) L^u t^{1+\alpha} \frac{dt}{t} (1+L)^\alpha f.$$
By producing similar arguments as above, we conclude
$$ \|L^u(f)\|_{L^p} \lesssim \|(1+L)^\alpha f\|_{L^p},$$
which ends the proof.
\end{proof}

\begin{rem} The previous result legitimates the  designation ``{\it non-homogeneous Sobolev spaces}" for $W^{s,p}_L$, since its norm is equivalent to 
$$ \| \cdot \|_{W^{s,p}_L} \simeq \|(1+L)^{s/m} \cdot \|_{L^p}$$
which gives $W^{s,p}_L = (1+L)^{-s/m}(L^p)$.
\end{rem}

\subsection{Sobolev embeddings}

Here, we aim to prove Sobolev embeddings, with these new and general Sobolev spaces. To do that, we require an extra assumption : there exists a constant $c>0$ such that for all $x\in M$
\begin{equation} \mu(B(x,1)) \geq c. \label{eq:minoration} \end{equation}
Due to the homogeneous type of the manifold $M$, this is equivalent to a below control of the volume $(MV_d)$
\begin{equation} \tag{$MV_d$} \mu(B(x,r)) \gtrsim r^d \end{equation}
for all $0<r\leq 1$.

\begin{prop} Let $s>0$ be fixed and take $p\leq q$ such that
$$ \frac{1}{q}> \frac{1}{p}-\frac{s}{d} \quad \textrm{and} \quad p\geq
s_-.$$
Then under (\ref{eq:minoration}), we have the continuous embedding
$$ W^{s,p}_L \hookrightarrow L^q.$$
\end{prop}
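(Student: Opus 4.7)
The strategy is to use the subordination formula
$$(1+L)^{-s/m} = \frac{1}{\Gamma(s/m)}\int_0^\infty e^{-t}\, e^{-tL}\, t^{s/m-1}\, dt$$
combined with off-diagonal decays (\ref{eq:off-diag}) and the volume lower bound $(MV_d)$ to derive an $L^p\to L^q$ bound on $(1+L)^{-s/m}$. Since (by Proposition \ref{prop:equivalence} and its extension to larger $s$) $W^{s,p}_L$ coincides with $(1+L)^{-s/m}(L^p)$ with equivalent norms, the embedding will follow at once.

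\textbf{Step 1: $L^p \to L^q$ bound for $e^{-tL}$.} I would first establish that for $s_-\leq p\leq q\leq\infty$ and $t\in(0,1]$,
$$\|e^{-tL}\|_{L^p\to L^q}\lesssim t^{-\frac{d}{m}\left(\frac{1}{p}-\frac{1}{q}\right)}.$$
Applying (\ref{eq:off-diag}) to a ball $B=B(x,t^{1/m})$, Hölder's inequality upgrades each $L^{s_-}$-average on $2^kB$ into an $L^p$-average, and $(MV_d)$ gives $\mu(2^kB)\gtrsim (2^k t^{1/m})^d$ whenever $2^k t^{1/m}\leq 1$. The dyadic sum then converges thanks to the factor $2^{-\delta k}$, producing the endpoint estimate $\|e^{-tL}f\|_{L^\infty}\lesssim t^{-d/(mp)}\|f\|_{L^p}$ (the tail regime $2^k t^{1/m}>1$, where only $\mu(2^kB)\gtrsim 1$ is known, contributes a lower-order term absorbed into the same estimate). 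Interpolation with the uniform $L^p\to L^p$ bound, itself a consequence of the pointwise majorization $|e^{-tL}f|\lesssim \mathcal M_{s_-}f$ together with Theorem \ref{MIT}, yields the full $L^p\to L^q$ estimate.

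\textbf{Step 2: Subordination.} Inserting Step 1 into the integral representation and splitting at $t=1$ gives
$$\|(1+L)^{-s/m}f\|_{L^q}\lesssim \int_0^1 e^{-t}\, t^{\frac{s}{m}-1-\frac{d}{m}\left(\frac{1}{p}-\frac{1}{q}\right)}\, dt \cdot \|f\|_{L^p} + \int_1^\infty e^{-t}\, t^{\frac{s}{m}-1}\, dt \cdot \|f\|_{L^p}.$$
The integral on $[1,\infty)$ converges by exponential decay, while the integral on $(0,1]$ converges precisely when the exponent satisfies $\frac{s}{m}-1-\frac{d}{m}\left(\frac{1}{p}-\frac{1}{q}\right)>-1$, which rewrites as $\frac{1}{q}>\frac{1}{p}-\frac{s}{d}$, i.e., the hypothesis.

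\textbf{Main obstacle.} The heart of the argument is Step 1: one must carefully handle the off-diagonal annular decomposition (\ref{eq:off-diag}) in conjunction with $(MV_d)$, separating the small-scale regime $2^kt^{1/m}\leq 1$ (where $(MV_d)$ supplies the $r^d$ lower bound) from the large-scale tail, and ensuring that the decay exponent $\delta>1$ beats the geometric growth $(2^k t^{1/m})^{d/p}$ so that the sum closes. Once the semigroup estimate is established, the remainder is a routine Gamma-function computation.
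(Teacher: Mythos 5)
Your proposal is correct and follows essentially the same route as the paper: reduce the embedding (via the norm equivalence of Proposition \ref{prop:equivalence}) to the bound $\|(1+L)^{-s/m}f\|_{L^q}\lesssim \|f\|_{L^p}$, derive an $L^p\to L^q$ mapping estimate for $e^{-tL}$ from the off-diagonal decay (\ref{eq:off-diag}) combined with the volume lower bound (\ref{eq:minoration}), and conclude with the subordination integral, whose convergence near $t=0$ is exactly the hypothesis $\frac{1}{q}>\frac{1}{p}-\frac{s}{d}$. The only divergence is in Step 1: the paper obtains $\|e^{-tL}\|_{L^p\to L^q}$ directly, by integrating the localized estimate $\|e^{-tL}f\|_{L^q(B)}\lesssim \mu(B)^{1/q}\sum_{j}2^{-j\delta}\bigl(\mu(2^jB)^{-1}\int_{2^jB}|f|^p d\mu\bigr)^{1/p}$ over all balls $B$ of radius $t^{1/m}$ (Minkowski plus doubling), whereas you prove the $L^p\to L^\infty$ endpoint and interpolate with the uniform $L^p\to L^p$ bound; both work. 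Two small caveats: your justification of the uniform $L^p$ bound through ${\mathcal M}_{s_-}$ and Theorem \ref{MIT} requires $p>s_-$, so at the admissible endpoint $p=s_-$ you should instead get it directly from (\ref{eq:off-diag}) (the paper's computation with $q=p$ does exactly this), and the integral over $t\geq 1$ in Step 2 uses $\|e^{-tL}\|_{L^p\to L^q}\lesssim 1$, which follows from your Step 1 argument once one notes that for $t\geq 1$ every ball $2^kB$ has radius at least $1$, hence measure $\gtrsim 1$. Finally, in your closing remark there is no real competition between $2^{-\delta k}$ and a growing factor: the volume lower bound enters with a negative power, so the dyadic sum converges for any $\delta>0$.
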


\begin{proof} The desired embedding is equivalent to the following
inequality $$ \| f\|_{L^q} \lesssim \|(1+L)^{s/m} f\|_{L^p},$$
which is equivalent to
\begin{equation} \| (1+L)^{-s/m} f\|_{L^q} \lesssim \|f\|_{L^p}.
\label{eq:sobbis} \end{equation}
Let us prove this one. We first decompose the resolvent with the
semigroup as follows
$$ (1+L)^{-s/m} f = \int_0^\infty t^{s/m} e^{-t(1+L)}f \frac{dt}{t}.$$
Since we know that $e^{-tL}$ satisfies some $L^{p}-L^{q}$ off-diagonal
estimates (since $p>s_-$), it follows that it satisfies global estimates
$$ \|e^{-tL}\|_{L^p \rightarrow L^q} \lesssim
\min(1,t^{d(\frac{1}{q}-\frac{1}{p})/m}).$$
Indeed, from the off-diagonal decays, we know that for all balls $B$ of radius $t^{1/m}$, 
\begin{align*}
 \|e^{-tL}f\|_{L^q(B)} & \lesssim \mu(B)^{\frac{1}{q}} \sum_{j\geq 0} 2^{-j\delta} 
 \left(\aver{2^j B} |f|^{p} d\mu \right)^{1/p}. 
\end{align*}
So using the doubling property and Minkowski inequality, we have
\begin{align*}
 \|e^{-tL}f\|_{L^q} &  \simeq \left\| \left(\aver{(B(x,t^{1/m}))} |e^{-tL}(f)|^q d\mu \right)^{1/q}  \right\|_{L^q} \\
  & \lesssim \sum_{j\geq 0} 2^{-j\delta} \left\| \left(\aver{(B(x,2^j t^{1/m}))} |f|^p d\mu \right)^{1/p}  \right\|_{L^q} \\
  & \lesssim \sum_{j\geq 0} 2^{-j\delta}  \left( \int |f(y)|^p \mu(B(y,2^j t^{1/m}))^{p/q-1} d\mu(y) \right)^{1/p} \\
  & \lesssim \min(1,t^{d(\frac{1}{q}-\frac{1}{p})/m}) \|f\|_{L^p},
  \end{align*}
  where we used (\ref{eq:minoration}) with $q\geq p$ at the last equation.
Hence,
\begin{align*}
\| (1+L)^{-s/m} f\|_{L^q}  & \lesssim \left(\int_0^1 t^{s/m} e^{-t}
t^{d(\frac{1}{q}-\frac{1}{p})/m } \frac{dt}{t}+\int_1^\infty t^{s/m} e^{-t}
 \frac{dt}{t}\right) \|f\|_{L^p} \\
 & \lesssim \|f\|_{L^p},
 \end{align*}
since $s+d(\frac{1}{q}-\frac{1}{p})>0$. Finally, we have proved
(\ref{eq:sobbis}) which is equivalent to the desired result.
\end{proof}

In particular, we deduce :

\begin{cor} \label{cor:Li} Under the previous assumption, $ W^{s,p}_L \hookrightarrow L^\infty$ as soon as 
$$ s>\frac{d}{p} \textrm{ and } \quad p\geq s_-.$$
\end{cor}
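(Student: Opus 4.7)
The plan is to extend the proof of the preceding proposition to the endpoint $q=\infty$. By Proposition \ref{prop:equivalence}, the embedding $W^{s,p}_L \hookrightarrow L^\infty$ is equivalent to the estimate
$$\|(1+L)^{-s/m} f\|_{L^\infty} \lesssim \|f\|_{L^p},$$
so I would try to carry out the same semigroup representation argument used for finite $q$.

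The first step is to derive an $L^p \to L^\infty$ bound for $e^{-tL}$ of the form
$$\|e^{-tL}\|_{L^p \to L^\infty} \lesssim \max\!\left(1, t^{-d/(mp)}\right).$$
Starting from the $L^{s_-}$-$L^\infty$ off-diagonal decay (\ref{eq:off-diag}), for any ball $B$ of radius $t^{1/m}$,
$$\|e^{-tL} f\|_{L^\infty(B)} \lesssim \sum_{k\geq 0} 2^{-k\delta} \left(\aver{2^k B} |f|^{s_-} d\mu\right)^{1/s_-}.$$
Applying Hölder's inequality on $2^k B$ (using $p \geq s_-$) and the lower volume estimate $\mu(2^k B) \gtrsim \min(1,(2^k t^{1/m})^d)$ -- obtained by combining $(MV_d)$ for small balls with $\mu(B(x,r))\gtrsim 1$ for $r\geq 1$ (which follows from doubling) -- one bounds each term by $\mu(2^k B)^{-1/p} \|f\|_{L^p}$. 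Splitting the sum at the scale $k_0$ where $2^{k_0} t^{1/m} \simeq 1$ and summing the resulting geometric series gives the asserted bound.

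Next, I would use the subordination formula
$$(1+L)^{-s/m} f = \frac{1}{\Gamma(s/m)} \int_0^\infty t^{s/m-1} e^{-t} e^{-tL} f \, dt$$
and the semigroup estimate to get
$$\|(1+L)^{-s/m} f\|_{L^\infty} \lesssim \|f\|_{L^p} \left( \int_0^1 t^{s/m - d/(mp) - 1}\, dt + \int_1^\infty t^{s/m-1} e^{-t}\, dt \right).$$
The second integral is trivially finite, and the first converges exactly when $s/m - d/(mp) > 0$, i.e.\ when $s > d/p$, which is the hypothesis.

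The main obstacle is carefully handling the two regimes of the volume bound: $(MV_d)$ only applies for balls of radius $\leq 1$, so the sum over $k$ must be split according to whether $2^k t^{1/m}$ is smaller or larger than $1$, and one must check that the worst contribution (coming from the borderline scale) indeed produces the $t^{-d/(mp)}$ factor for small $t$ and no blow-up for large $t$. Once this endpoint semigroup bound is in hand, the rest of the argument is a direct specialization of the proposition with $q=\infty$.
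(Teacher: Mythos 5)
Your proposal is correct and follows essentially the same route as the paper: the paper obtains the corollary as the $q=\infty$ instance of the preceding Sobolev-embedding proposition, whose proof is precisely your argument (off-diagonal decay plus H\"older and the volume lower bound to get the $L^p\to L^\infty$ semigroup estimate, then the subordination formula, with convergence at $t\to 0$ exactly when $s>d/p$). Your version merely writes out explicitly the endpoint $q=\infty$ and the splitting of the sum at the scale $2^{k}t^{1/m}\simeq 1$, and your $\max\bigl(1,t^{-d/(mp)}\bigr)$ form of the bound is the correct one (the paper's displayed ``$\min$'' is a slip, as its subsequent integral estimate shows).
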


\subsection{Limit Sobolev embedding into $L^\infty$}

We are interested to control the growth of the previous estimate with respect to the Sobolev norm, especially when $s$ tends to $d/p$. We refer the reader to \cite{KT} where a logarithmic Sobolev inequality by means of the BMO norm is proved. The goal is to reduce the behavior of the Sobolev norm in the previous Sobolev embeddings and to replace it by a BMO norm.
As described in \cite{KT}, this is crucial and very important to get a sharp estimate of the existence-time for solutions of Euler equations.

Moreover, such inequalities are interesting by themselves since they describe the rate of regularity to impose at a BMO function to prove its uniform boundedness. 

We propose a simpler proof than in \cite{KT} and extend it to our current framework. 

\begin{thm} Let $p\in(s_-,\infty)$ and $s>d/p$. We have the following Sobolev embedding~:
$$ \|f\|_{L^\infty} \lesssim 1+ \|f\|_{BMO} \left(1+\log(2+\|f\|_{W^{s,p}})\right)$$
as soon as $s>d/p$.
\end{thm}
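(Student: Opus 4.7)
The plan is to decompose $f$ at two semigroup scales $0<\tau<T$ via
$$ f = (I - e^{-\tau L})f + \int_\tau^T L e^{-tL} f\, dt + e^{-T L}f, $$
and to estimate the three pieces by the Sobolev, BMO, and $L^p$ norms of $f$ respectively, then optimize over $\tau$ and $T$. This is the semigroup analogue of the Littlewood--Paley frequency cut-off used by Kozono--Taniuchi.

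For the high-frequency piece, I would write $(I - e^{-\tau L})f = \int_0^\tau L e^{-tL}f\, dt$ and factor
$$L e^{-tL} = t^{s/m-1}(tL)^{1-s/m}e^{-tL}\cdot L^{s/m}$$
(taking $s\le m$ without loss of generality, modulo an interpolation argument for larger $s$). Since $(tL)^{1-s/m}e^{-tL}$ inherits the $L^{s_-}\!-L^\infty$ off-diagonal decay of the semigroup by Remark \ref{rem:holo}, the same computation as in the proof of Corollary \ref{cor:Li} gives $\|(tL)^{1-s/m}e^{-tL}\|_{L^p\to L^\infty}\lesssim t^{-d/(mp)}$, uniformly in $t>0$ thanks to $(MV_d)$. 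Integrating in $t\in(0,\tau)$ (convergent because $s>d/p$) yields
$$ \|(I - e^{-\tau L})f\|_{L^\infty} \lesssim \tau^{(s-d/p)/m}\,\|f\|_{W^{s,p}_L}.$$
The low-frequency piece is handled by the same off-diagonal argument at the scale $T$: $\|e^{-TL}f\|_{L^\infty} \lesssim T^{-d/(mp)}\|f\|_{L^p}$.

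The decisive step is the BMO control of the middle piece. The conservation property $L\mathbf{1}=0$ gives, for every $x$ and every constant $c$, $Le^{-tL}f(x) = Le^{-tL}(f - c)(x)$; choosing $c = (f)_{B(x, t^{1/m})}$ and combining the off-diagonal decay for $tLe^{-tL}$ with the John--Nirenberg inequality (which provides $(\aver{2^k B}|f - (f)_B|^{s_-}\,d\mu)^{1/s_-} \lesssim (1+k)\|f\|_{BMO}$) should give
$$ \|Le^{-tL}f\|_{L^\infty} \lesssim t^{-1} \sum_{k\geq 0} 2^{-\delta k}(1+k)\|f\|_{BMO} \lesssim t^{-1}\|f\|_{BMO},$$
and therefore $\bigl\|\int_\tau^T Le^{-tL}f\,dt\bigr\|_{L^\infty} \lesssim \log(T/\tau)\,\|f\|_{BMO}$.

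To conclude, I would set $\tau = (2+\|f\|_{W^{s,p}_L})^{-m/(s-d/p)}$ and $T = (2+\|f\|_{L^p})^{mp/d}$, so that the Sobolev and $L^p$ contributions are each $O(1)$; since $\|f\|_{L^p}\leq \|f\|_{W^{s,p}_L}$ one has $\tau\leq 1\leq T$, and $\log(T/\tau)\lesssim 1+\log(2+\|f\|_{W^{s,p}_L})$, which yields the claim. The hard part will be the BMO-to-$L^\infty$ bound for $Le^{-tL}$: it rests on the cancellation $L\mathbf{1}=0$ together with off-diagonal decay strong enough (namely $\delta>1$) to absorb the logarithmic growth coming from John--Nirenberg, and one must verify that the classical BMO norm really is what appears here rather than some BMO adapted to $L$.
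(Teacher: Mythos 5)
Your proposal is correct and follows essentially the same route as the paper: a three-scale semigroup decomposition of $f$, with the small scales controlled by $\tau^{(s-d/p)/m}\|f\|_{W^{s,p}_L}$ via $L^p$--$L^\infty$ off-diagonal bounds, the intermediate scales by $\log(T/\tau)\,\|f\|_{BMO}$ using the cancellation $L\mathbf{1}=0$ together with the $(1+k)\|f\|_{BMO}$ growth of averages over dilated balls, the large scales by $T^{-d/(mp)}\|f\|_{L^p}$, and a final optimization in the two scales. The only difference is cosmetic: the paper uses $\psi(z)=z^N e^{-z}$ with $N\gg s/m$, which avoids your restriction $s\le m$ (your factor $(tL)^{1-s/m}e^{-tL}$ is singular at $z=0$ when $s>m$, so the ``interpolation for larger $s$'' caveat, or a higher-order kernel, is genuinely needed there), but this does not change the argument.
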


\begin{proof}
Let us choose a small parameter $\epsilon<1$ and a large one $R>1$. We also have the following decomposition~:
\begin{equation} f = \phi(\epsilon L) f + \int_{\epsilon}^{R} \psi(tL) f \frac{dt}{t} + \zeta(R L) f  \label{eq:decompp} \end{equation}
where for a large enough integer $N>>s/m$, we define $\psi(z):=z^N e^{-z}$, $\phi(z)=\int_0^1 \psi(uz) \frac{du}{u}$ and $\zeta(z)=\int_1^\infty \psi(uz) \frac{du}{u}$.
Then, let us examine the three terms. \\
We first claim that
\begin{equation}
 \left\| \phi(\epsilon L) f \right\|_{L^\infty} \lesssim \epsilon^{s-d/p} \|f\|_{W^{s,p}}. \label{eq:sob1}
\end{equation}
Indeed, we have
\begin{align*}
 \phi(\epsilon L) f & = \int_0^1 \psi(\epsilon uL) f \frac{du}{u} = \int_0^\epsilon \psi(uL) f \frac{du}{u} \\
  & = \int_0^\epsilon u^{s/m} \tilde{\psi}(uL) L^{s/m} f \frac{du}{u},
\end{align*}
with $\tilde{\psi}(z)=z^{N-s/m} e^{-z}$. Then using the $L^{p}-L^\infty$ estimates of $\tilde{\psi}(uL)$ (implied by the off-diagonal decays), we conclude to (\ref{eq:sob1})~:
\begin{align*}
 \| \phi(\epsilon L) f\|_{L^\infty} & \lesssim \int_0^\epsilon u^{s/m} \left\|\tilde{\psi}(uL) L^{s/m} f\right\|_{L^\infty} \frac{du}{u} \\
 & \lesssim \int_0^\epsilon u^{s/m} u^{-d/(mp)} \left\| L^{s/m} f\right\|_{L^p} \frac{du}{u} \\
 & \lesssim \epsilon^{(s-d/p)/m} \|f\|_{W^{s,p}_L},
 \end{align*}
where we used $s>d/p$. Then, concerning the second term in (\ref{eq:decompp}), we claim
 \begin{equation}
 \left\|  \int_{\epsilon}^{R} \psi(tL) f \frac{dt}{t}  \right\|_{L^\infty} \lesssim \log(\epsilon^{-1} R) \|f\|_{BMO}. \label{eq:sob2}
\end{equation}
We introduce the averages (since $L({\bf 1})=0$) as follows and then use the off-diagonal decays~:
\begin{align*}
 \left\|  \int_{\epsilon}^{R} \psi(tL) f \frac{dt}{t}  \right\|_{L^\infty} & = \left\|  \int_{\epsilon}^{R} \psi(tL) [f-\aver{B(x,t^{1/m})}f] \frac{dt}{t}  \right\|_{L^\infty} \\
 & \leq \int_\epsilon ^R \left\| \psi(tL) [f-\aver{B(x,t^{1/m})}f] \right\|_{L^\infty} \frac{dt}{t} \\
 & \lesssim \sum_{k\geq 0} \int_\epsilon ^ R 2^{-k\delta} \left(\aver{B(x,2^k t^{1/m})} \left| f-\aver{B(x,t^{1/m})}f \right|^{s_-} d\mu \right)^{1/s_-} \frac{dt}{t}.
 \end{align*}
 It is well-known that
 \begin{align*}
 \left(\aver{B(x,2^k t^{1/m})} \left| f-\aver{B(x,t^{1/m})}f \right|^{s_-} d\mu \right)^{1/s_-} & \lesssim \left(\aver{B(x,2^k t^{1/m})}  \left| f-\aver{B(x,2^k t^{1/m})}f \right|^{s_-} d\mu \right)^{1/s_-} + k \|f\|_{BMO} \\
 & \lesssim (1+k) \|f\|_{BMO}.
 \end{align*}
 Therefore 
 \begin{align*}
 \left\|  \int_{\epsilon}^{R} \psi(tL) f \frac{dt}{t}  \right\|_{L^\infty}  & \lesssim \sum_{k\geq 0} \int_\epsilon ^ R 2^{-k\delta} (1+k) \|f\|_{BMO} \frac{dt}{t}\\
 & \lesssim \log(R/\epsilon) \|f\|_{BMO}
 \end{align*}
which yields (\ref{eq:sob2}).

For the third term in (\ref{eq:decompp}), we claim that 
\begin{equation}
 \left\|  \zeta(RL) f  \right\|_{L^\infty} \lesssim R^{-d/p} \|f\|_{BMO}. \label{eq:sob3}
\end{equation}
 We use similar arguments as we did for the first term~:
 \begin{align*}
   \| \zeta(R L) f\|_{L^\infty} & \lesssim \int_R^\infty \left\|\psi(uL)  f\right\|_{L^\infty} \frac{du}{u} \\
 & \lesssim \int_R^\infty  u^{-d/(mp)} \left\| f\right\|_{L^p} \frac{du}{u} \\
 & \lesssim R^{-d/(mp)} \|f\|_{L^p} \lesssim R^{-d/(mp)} \|f\|_{W^{s,p}}.
\end{align*}
Finally, we obtain the following estimate
$$ \|f\|_{L^\infty} \lesssim \left(\epsilon^{(s-d/p)/m}+R^{-d/(mp)}\right) \|f\|_{W^{s,p}} + \log(R/\epsilon) \|f\|_{BMO}.$$
We conclude as in \cite{KT} choosing $\epsilon$ and $R$ such that
$$ \epsilon^{(s-d/p)/m} = R^{-d/(mp)} = \min(1, \|f\|_{W^{s,p}}^{-1}).$$
\end{proof}

\mb We can slighty improve this result using the BMO space related with the semigroup $e^{-tL}$ as follows. Let us recall its definition~:

\begin{dfn} A function $f\in L^{s_-}_{loc}$ belongs to the space $BMO_L$ if for some exponent $p\in(s_-,\infty)$
$$ \|f\|_{BMO_L}:=\sup_{t>0,\ x\in M} \left(\aver{B(x,t^{1/m})} \left| f-e^{-tL}f\right|^{p} d\mu \right)^{1/p}.$$
\end{dfn}

We refer the reader to \cite{BZ2} (paragraph 3.4 for the special case given by a semigroup) for precise study of John-Nirenberg inequalities, showing that the norm in $BMO_L$ does not depend on the considered exponent $p$ as soon as $p\in(s_-,\infty)$. More generally, recent works of Jimenez del Toro, Martell and the two first authors \cite{BJM,BM} are devoted to the study of such self-improving inequalities.
Then it is well known that such $BMO_L$ spaces are bigger than the classical $BMO$ space (see Proposition 6.7 \cite{DY1} and Remark 7.6 of \cite{BZ} for a more general study of this question).

\begin{cor}  Let consider $p\in(s_-,\infty)$ and $s>0$. We have the following Sobolev embedding~:
$$ \|f\|_{L^\infty} \lesssim 1+ \|f\|_{BMO_L} \left(1+\log(2+\|f\|_{W^{s,p}})\right)$$
as soon as $s>d/p$.
\end{cor}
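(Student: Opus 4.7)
The plan is to follow the proof of the preceding theorem almost verbatim, modifying only the treatment of the middle term in the Calder\'on-type decomposition~(\ref{eq:decompp}). The bounds (\ref{eq:sob1}) for $\phi(\epsilon L)f$ and (\ref{eq:sob3}) for $\zeta(RL)f$ involve only $L^p$ and $W^{s,p}_L$ norms of $f$, so they carry over unchanged. It therefore suffices to establish the $BMO_L$-analogue of (\ref{eq:sob2}),
$$\left\|\int_\epsilon^R\psi(tL)f\,\frac{dt}{t}\right\|_{L^\infty}\lesssim \log(R/\epsilon)\,\|f\|_{BMO_L},$$
and then to rerun verbatim the optimization of $\epsilon,R$ from the previous proof.

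The crux is the uniform pointwise estimate $\|\psi(tL)f\|_{L^\infty}\lesssim \|f\|_{BMO_L}$. I would exploit that $L({\bf 1})=0$ and $\psi(0)=0$ imply $\psi(tL)({\bf 1})=0$, so that for each $x\in M$ and every scalar $c$ one still has $\psi(tL)f(x)=\psi(tL)(f-c)(x)$. Rather than subtracting the ball average, as in the $BMO$ proof, I subtract the scalar $c=e^{-tL}f(x)$ (tailored to the $BMO_L$ norm). The $L^{s_-}$--$L^\infty$ off-diagonal decay of $\psi(tL)$ at scale $t^{1/m}$ then yields
$$|\psi(tL)f(x)|\lesssim \sum_{k\geq 0}2^{-k\delta}\left(\aver{B(x,2^kt^{1/m})}|f(y)-e^{-tL}f(x)|^{s_-}\,d\mu(y)\right)^{1/s_-},$$
and everything reduces to bounding each inner average by $(1+k)\|f\|_{BMO_L}$. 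For this I would split
$$f(y)-e^{-tL}f(x)=[f(y)-e^{-2^{km}tL}f(y)]+[e^{-2^{km}tL}f(y)-e^{-2^{km}tL}f(x)]+[e^{-2^{km}tL}f(x)-e^{-tL}f(x)].$$
Term $(\mathrm{I})$, averaged on $B(x,2^kt^{1/m})$, is dominated by $\|f\|_{BMO_L}$ directly from the definition since ball radius and semigroup time are matched ($s=2^{km}t$). Term $(\mathrm{II})$ is the local oscillation of $e^{-sL}f$ at its natural scale $s^{1/m}$ and is controlled by $\|f\|_{BMO_L}$ via a standard argument: write the kernel difference acting on $f$, subtract a scalar using $e^{-sL}({\bf 1})={\bf 1}$, and apply the off-diagonal decay of $e^{-sL}$. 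Term $(\mathrm{III})$ is a scalar; I would telescope
$$e^{-tL}f(x)-e^{-2^{km}tL}f(x)=\sum_{j=0}^{km-1}e^{-2^jtL}(f-e^{-2^jtL}f)(x)$$
and estimate each piece pointwise at its matched scale by $\|f\|_{BMO_L}$, producing a factor linear in $k$. Since $\delta>1$, summation in $k$ gives the claimed uniform bound.

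The main obstacle is the scale-mismatch inherent to the off-diagonal sum: $\|f\|_{BMO_L}$ only controls matched couples $(B(x,r),e^{-r^mL})$, whereas the $k$-th term naturally pairs a ball of radius $2^kt^{1/m}$ with the \emph{fixed} time $t$. The three-term splitting above is precisely what bypasses this, and its rigorous implementation rests on the self-improvement and John--Nirenberg theory for $BMO_L$ developed in \cite{BZ2,BJM,BM} (in particular, the $p$-independence of the $BMO_L$ norm for $p\in(s_-,\infty)$). Once the uniform pointwise bound on $\psi(tL)f$ is in hand, integration in $t$ on $[\epsilon,R]$ with density $\frac{dt}{t}$ produces the $\log(R/\epsilon)$ factor, and the same choice $\epsilon^{(s-d/p)/m}=R^{-d/(mp)}=\min(1,\|f\|_{W^{s,p}}^{-1})$ as in the previous theorem concludes the proof.
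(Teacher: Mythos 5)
Your overall skeleton (reuse the decomposition (\ref{eq:decompp}), keep the bounds (\ref{eq:sob1}) and (\ref{eq:sob3}), and only rework the middle term) is indeed the paper's strategy, but your treatment of the middle term has a genuine gap at term $(\mathrm{II})$. To bound the averaged oscillation of $e^{-sL}f$, $s=2^{km}t$, over the matched ball by $\|f\|_{BMO_L}$, you invoke ``a standard argument: write the kernel difference acting on $f$, subtract a scalar, apply off-diagonal decay''. Under the paper's hypotheses no such argument is available: only the $L^{s_-}$--$L^\infty$ off-diagonal decay (\ref{eq:off-diag}) is assumed, with no pointwise or H\"older regularity of the kernel, so a kernel-difference estimate cannot be used; and if instead you apply the off-diagonal decay to $f-c$ at $x$ and at $y$ separately, you need averages of $|f-c|$ over balls to be controlled by $\|f\|_{BMO_L}$, i.e.\ a classical $BMO$ bound, which does not follow since $BMO_L$ is strictly larger than $BMO$ (the results of \cite{BZ2,BJM,BM} give the $p$-independence of the $BMO_L$ norm, not comparability with $BMO$). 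In fact your key inner estimate, as a bound purely in terms of $\|f\|_{BMO_L}$, is false: for $L=-\Delta$ on $\R^d$ and $f(x)=x_1$ one has $e^{-tL}f=f$, hence $\|f\|_{BMO_L}=0$, while $\bigl(\aver{B(x,2^kt^{1/m})}|f(y)-e^{-tL}f(x)|^{s_-}d\mu(y)\bigr)^{1/s_-}\simeq 2^kt^{1/m}>0$; the same example shows the oscillation of $e^{-sL}f$ on the ball of radius $s^{1/m}$ cannot be dominated by $\|f\|_{BMO_L}$. Terms $(\mathrm{I})$ and $(\mathrm{III})$ are fine, but the uniform pointwise bound $\|\psi(tL)f\|_{L^\infty}\lesssim\|f\|_{BMO_L}$ cannot be reached this way: any correct argument must use more than the $BMO_L$ seminorm (here, the membership of $f$ in $L^p$), which is precisely why the paper's final estimate (\ref{eq:sob2bis}) carries the extra term $(\epsilon^{(s-d/p)/m}+R^{-d/(mp)})\|f\|_{W^{s,p}}$ rather than the pure $\log(R/\epsilon)\|f\|_{BMO_L}$ bound you assert.

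The paper bypasses the scale mismatch algebraically instead of by your three-term splitting: since $\psi(z)=z^Ne^{-z}$ satisfies $\psi(tL)e^{-tL}=2^{-N}\psi(2tL)$, one has $(1-2^{-N})\int_\epsilon^R\psi(tL)f\,\frac{dt}{t}=\int_{\epsilon/2}^R\psi(tL)(1-e^{-tL})f\,\frac{dt}{t}$ up to two boundary integrals over $[\epsilon/2,\epsilon]$ and $[R,2R]$, which are estimated exactly like the first and third terms of (\ref{eq:decompp}) and produce the extra Sobolev terms. In the main integral the off-diagonal decay of $\psi(tL)$ is applied to the \emph{function} $f-e^{-tL}f$, whose $L^{s_-}$-averages over $B(x,2^kt^{1/m})$ are $\lesssim\|f\|_{BMO_L}$ simply by covering that ball with boundedly overlapping balls of radius $t^{1/m}$: the semigroup time and the covering radius are matched by construction, so no pointwise value at $x$, no constant subtraction, and no comparison of different semigroup times is ever needed. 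If you want to salvage your route, replace the scalar $e^{-tL}f(x)$ by the function $e^{-tL}f$, i.e.\ write $\psi(tL)f=\psi(tL)(f-e^{-tL}f)+2^{-N}\psi(2tL)f$ and iterate (the remainder tends to $0$ because $f\in L^p$); this is the paper's identity in disguise.
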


\begin{proof} We only mention the modifications and let the details to the reader. We keep the notations of the previous proof. Arguing as in the previous proof, we show that the first and third terms in (\ref{eq:decompp}) are still bounded. Concerning the second term, we claim that (instead of (\ref{eq:sob2})), we have
 \begin{equation}
 \left\|  \int_{\epsilon}^{R} \psi(tL) f \frac{dt}{t}  \right\|_{L^\infty} \lesssim \log(\epsilon^{-1} R) \|f\|_{BMO_L} + \left(\epsilon^{s-d/p}+R^{-d/p}\right) \|f\|_{W^{s,p}} . \label{eq:sob2bis}
\end{equation}
 This is based on the following identity
 $$ (1-2^{-N}) \int_{\epsilon}^{R} \psi(tL) f \frac{dt}{t} =  \int_{\epsilon/2}^{R} \psi(tL) (1-e^{-tL})f \frac{dt}{t} - \int_{\epsilon/2}^\epsilon \psi(tL) f \frac{dt}{t} + 2^{-N} \int_{R}^{2R} \psi(tL) f \frac{dt}{t},$$
this comes from $\psi(tL)e^{-tL}=2^{-N} \psi(2tL)$.
Then the second term can be bounded as for the first one in (\ref{eq:decompp}) and the third term as for the third one in (\ref{eq:decompp}). We also deduce that
$$ \left\|  \int_{\epsilon}^{R} \psi(tL) f \frac{dt}{t}  \right\|_{L^\infty} \lesssim \left\| \int_{\epsilon}^{2R} \psi(tL) (1-e^{-tL})f \frac{dt}{t} \right\|_{L^\infty} + \left(\epsilon^{(s-d/p)/m}+R^{-d/(mp)}\right) \|f\|_{W^{s,p}}. $$ 
Then it remains us to study the main term which is based (as previously) on
\begin{align*}  
\left(\aver{B(x,2^k t^{1/m})} \left|f-e^{-tL}f \right|^p d\mu \right)^{1/p} \lesssim \|f\|_{BMO_L},
 \end{align*}
since $B(x,2^k t^{1/m})$ admits a bounded covering of balls with radius $t^{1/m}$.
\end{proof}

\section{Proof of Theorem \ref{Sobalg1} using Paraproducts point of view} \label{sec:paraproduit}

In this section, we will prove Theorem \ref{Sobalg1} using a new tool in this topic which consists in paraproducts, associated to a semigroup (see \cite{B2} where they were recently introduced and in \cite{Phd}).

\begin{dfn} For $N$ a large enough integer, we set $\psi(x)=x^N e^{-x}(1-e^{-x})$, $ \phi(x):=-\int_x^\infty \psi(y) dy/y$ and
$$ \psi_t(L):=\psi(tL)=(tL)^N e^{-tL}(1-e^{-tL}) \quad \textrm{and} \quad \phi_t(L):=\phi(tL).$$
We also consider the two following kind of ``paraproducts''~:
$$ \Pi(f,g):= \int_{0}^\infty \psi(tL) \left[\phi(tL)f \, \phi(tL)g \right] \frac{dt}{t}$$
and
$$ \Pi_g(f):= \int_{0}^\infty \phi(tL) \left[\psi(tL)f \, \phi(tL)g \right] \frac{dt}{t}.$$
\end{dfn}

We get a ``spectral decomposition'' of the pointwise product as follows~: up to some numerical constant $c$, we have
$$ f  = c \int_0^\infty \phi'(tL) f \frac{dt}{t}.$$
So for two functions, we have
$$ fg := c^3 \int_{s,u,v>0} sL \phi'(sL) \left[sL \phi'(uL)f \, sL \phi'(vL)g \right] \frac{dsdudv}{suv}.$$
Since $\phi'(x)=\psi(x)/x$, one obtains (by splitting the integral into three parts according to $t:=\min\{s,u,v\}$)
\begin{align}
 fg := & c^3 \int_{0}^\infty \psi(tL) \left[\phi(tL)f \, \phi(tL)g \right] \frac{dt}{t} + c^3 \int_{0}^\infty \phi(tL) \left[\psi(tL)f \, \phi(tL)g \right] \frac{dt}{t} \nonumber\\
&+ c^3\int_{0}^\infty \phi(tL) \left[\phi(tL)f \, \psi(tL)g \right] \frac{dt}{t} \nonumber\\
& = c^3 \left[\Pi(f,g) + \Pi_g(f) + \Pi_f(g)\right] \label{paraproduct}. 
\end{align}

So the study of $fg$ is reduced to the study of the three paraproducts, appearing in this decomposition.

\begin{prop} \label{prop:para1}  Let $\beta>0$. For $p\in[r',\infty)$ and $q\in(s_-, \infty]$ with $p,r'\in(s_-,s_+)$ and $\frac{1}{r'}=\frac{1}{p}+\frac{1}{q}$
$$ \| L^{\beta}\Pi_g(f)\|_{L^{r'}} \lesssim \|L^\beta(f)\|_{L^p} \|g\|_{L^q}.$$
By symmetry, for $q\in[r',\infty)$ and $p\in(s_-, \infty]$ with $q,r'\in(s_-,s_+)$ and $\frac{1}{r'}=\frac{1}{p}+\frac{1}{q}$, we have
$$ \| L^{\beta}\Pi_f(g)\|_{L^{r'}} \lesssim \|f\|_{L^p} \|L^\beta (g)\|_{L^q}.$$
\end{prop}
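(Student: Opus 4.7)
The strategy is to dualize $L^{\beta}\Pi_{g}(f)$ against a test function $h\in L^{r}$ (with $1/r+1/r'=1$), thereby converting the claim into an estimate for a trilinear form involving a maximal operator on $g$ and two vertical square functions on $L^{\beta}f$ and $h$. The key algebraic preparation is to absorb the operator $L^{\beta}$ into the integrand and to factor it off of $f$. Using $L^{\beta}\phi(tL)=t^{-\beta}\tilde\phi(tL)$ with $\tilde\phi(z):=z^{\beta}\phi(z)$, and $\psi(tL)f=t^{\beta}\tilde\psi(tL)L^{\beta}f$ with $\tilde\psi(z):=z^{-\beta}\psi(z)=z^{N-\beta}e^{-z}(1-e^{-z})$, the $t^{\pm\beta}$ factors cancel and one obtains
$$L^{\beta}\Pi_{g}(f)=\int_{0}^{\infty}\tilde\phi(tL)\bigl[\tilde\psi(tL)L^{\beta}f\cdot \phi(tL)g\bigr]\,\frac{dt}{t}.$$
Provided $N$ was chosen strictly larger than $\beta$, the modified symbols $\tilde\psi$ and $\tilde\phi$ still lie in $H_{\infty}(S^{0}_{\nu})$ with polynomial vanishing at $0$ and fast decay at infinity, so the associated quadratic functionals fall within the scope of Proposition~\ref{prop:Lp}.

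Pairing with $h\in L^{r}$ and moving $\tilde\phi(tL)$ to the adjoint side yields
$$\langle L^{\beta}\Pi_{g}(f),h\rangle=\int_{0}^{\infty}\!\int_{M}\tilde\psi(tL)L^{\beta}f\cdot\phi(tL)g\cdot\tilde\phi(tL^{*})h\,d\mu\,\frac{dt}{t}.$$
I would bound $\phi(tL)g$ by its pointwise maximal control $\Phi^{*}g(x):=\sup_{t>0}|\phi(tL)g(x)|$ and apply the Cauchy--Schwarz inequality in $t$ to the remaining two factors. This gives the pointwise estimate
$$|\langle L^{\beta}\Pi_{g}(f),h\rangle|\leq \int_{M}\Phi^{*}(g)(x)\,S_{\tilde\psi}(L^{\beta}f)(x)\,S^{*}_{\tilde\phi}(h)(x)\,d\mu,$$
where $S_{\tilde\psi}$ and $S^{*}_{\tilde\phi}$ are the vertical square functions of the form (\ref{eq:func1}) associated to $L$ and $L^{*}$ respectively. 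A Hölder inequality with exponents $q,p,r$ — which is admissible since $1/q+1/p+1/r=1/r'+1/r=1$ — reduces the matter to estimating each of the three factors in its own Lebesgue space.

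Finally, I verify the three pieces separately. The operator $\Phi^{*}$ is pointwise dominated by the Hardy--Littlewood maximal function $\mathcal{M}_{s_{-}}$: indeed $\phi(tL)$ inherits the $L^{s_{-}}$--$L^{\infty}$ off-diagonal decay from the semigroup through Cauchy's formula (see Remark~\ref{rem:holo}), so $|\phi(tL)g(x)|\lesssim \mathcal{M}_{s_{-}}g(x)$ and therefore $\Phi^{*}$ is bounded on $L^{q}$ for every $q\in(s_{-},\infty]$. The two square functionals are bounded on $L^{p}$ and $L^{r}$ respectively by Proposition~\ref{prop:Lp}, whose range covers the whole of $(s_{-},\infty)$. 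The hypothesis $r'\in(s_{-},s_{+})$ forces $r\in(s_{-},\infty)$ via $r>s_{+}'\geq s_{-}$, and the hypothesis $p\in(s_{-},s_{+})$ gives the corresponding bound for $L^{\beta}f$. Assembling the three estimates yields the desired control of $\|L^{\beta}\Pi_{g}(f)\|_{L^{r'}}$; the symmetric bound for $\Pi_{f}(g)$ is obtained by repeating the argument with the roles of $f$ and $g$ exchanged.

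The only delicate point is the initial algebraic rewriting: one must be careful that the rescaled symbols $\tilde\psi(z)=z^{N-\beta}e^{-z}(1-e^{-z})$ and $\tilde\phi(z)=z^{\beta}\phi(z)$ still vanish fast enough at $0$ and decay at $\infty$ to trigger Proposition~\ref{prop:Lp}. This is automatic once $N$ is chosen larger than $\beta$ (which is harmless since $N$ was an arbitrary large integer), and \emph{after} this fix the proof is a clean Cauchy--Schwarz/Hölder bootstrap without further technical subtleties.
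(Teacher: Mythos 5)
Your proposal is essentially the paper's own proof: the same rescaling $L^\beta\phi(tL)=t^{-\beta}\tilde\phi(tL)$, $\psi(tL)=t^{\beta}\tilde\psi(tL)L^{\beta}$ (so that $L^{\beta}\Pi_g(f)=\widetilde\Pi_g(L^\beta f)$), the same dualization against $h\in L^r$, the same Cauchy--Schwarz in $t$, the same pointwise domination $\sup_t|\phi(tL)g|\lesssim\mathcal M_{s_-}g$ from the off-diagonal decay, the same trilinear H\"older in $L^q\times L^p\times L^r$, and the same appeal to Proposition~\ref{prop:Lp} for the two vertical square functions. One small aside in your write-up is not quite right: the inequality $s_+'\geq s_-$ you invoke to place $r$ in $(s_-,\infty)$ is not a hypothesis of the paper and does not follow from $s_-<2<s_+$ alone; but this is immaterial to the argument, since $r>2>s_-$ already because $r'<s_+$ forces $r'<\infty$ and in fact $r'\leq p<s_+$ with $r'\leq 2$ is not needed --- one simply notes $r=(r')'>1$ and, more to the point, the dual square function is controlled exactly as in Proposition~\ref{prop:Lp}, which the paper also leaves implicit.
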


Combining this result with H\"older inequality and using Proposition \ref{prop:equivalence}, we can also prove the following non-homogeneous version.

\begin{cor} \label{cor:para1i} 
Let $\alpha>0$ and set $\beta=\alpha/m>0$. For $p\in[r',\infty)$ and $q\in(s_-, \infty]$ with $p,r'\in(s_-,s_+)$ and $\frac{1}{r'}=\frac{1}{p}+\frac{1}{q}$
$$ \|\Pi_g(f)\|_{W^{\alpha,r'}_L} \lesssim \|f\|_{W_L^{\alpha,p}}  \|g\|_{L^q}.$$
By symmetry, for $q\in[r',\infty)$ and $p\in(s_-, \infty]$ with $q,r'\in(s_-,s_+)$ and $\frac{1}{r'}=\frac{1}{p}+\frac{1}{q}$, then
$$ \|\Pi_f(g)\|_{W_L^{\alpha,r'}} \lesssim \|f\|_{L^p} \|g\|_{W_L^{\alpha,q}}.$$
\end{cor}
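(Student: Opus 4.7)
The plan is to exploit the norm equivalence $\|u\|_{W^{\alpha,r'}_L}\simeq \|u\|_{L^{r'}}+\|L^{\alpha/m}u\|_{L^{r'}}$ supplied by Proposition \ref{prop:equivalence} and bound the two resulting pieces of $\Pi_g(f)$ separately. The homogeneous piece is handled immediately by Proposition \ref{prop:para1} with $\beta=\alpha/m$:
$$\|L^{\alpha/m}\Pi_g(f)\|_{L^{r'}}\lesssim \|L^{\alpha/m}f\|_{L^p}\|g\|_{L^q}\leq \|f\|_{W^{\alpha,p}_L}\|g\|_{L^q}.$$
For the remaining piece I need the base case
$$\|\Pi_g(f)\|_{L^{r'}}\lesssim \|f\|_{L^p}\|g\|_{L^q},$$
which is \emph{not} covered by Proposition \ref{prop:para1} (whose statement requires $\beta>0$) but follows from the same square-function/maximal-function machinery.

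To prove this base case I would dualize: for $h\in L^r$ with $\|h\|_{L^r}\leq 1$ (so $1/p+1/q+1/r=1$), write
$$\langle \Pi_g(f),h\rangle =\int_0^\infty\int_M \psi(tL)f\cdot \phi(tL)g\cdot \phi(tL^*)h\,d\mu\,\frac{dt}{t}.$$
The off-diagonal decays of $\phi(tL)$ (which follow from Assumption \ref{ass} together with the $H_\infty$-calculus described in Remark \ref{rem:holo}) yield the pointwise uniform bound $\sup_{t>0}|\phi(tL)g(x)|\lesssim \M_{s_-}(g)(x)$ (when $q=\infty$, the cruder estimate $|\phi(tL)g|\lesssim \|g\|_{L^\infty}$ suffices). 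Pull this factor out of the $t$-integral and apply Cauchy--Schwarz in $t$ to the remaining two factors, producing the vertical square functions $S_\psi(f)$ and $S_{\phi^*}(h)$. Then H\"older with exponents $p,q,r$, Theorem \ref{MIT} for the maximal function, and Proposition \ref{prop:Lp} for the two square functions give
$$|\langle \Pi_g(f),h\rangle|\lesssim \|S_\psi(f)\|_{L^p}\|\M_{s_-}(g)\|_{L^q}\|S_{\phi^*}(h)\|_{L^r}\lesssim \|f\|_{L^p}\|g\|_{L^q}.$$
Combining the two pieces and using $\|f\|_{L^p}\leq \|f\|_{W^{\alpha,p}_L}$ completes the estimate on $\Pi_g(f)$; the estimate on $\Pi_f(g)$ follows by exchanging the roles of $f$ and $g$.

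The main technical hurdle is the $L^r$-boundedness of the adjoint square function $S_{\phi^*}$: Proposition \ref{prop:Lp} as stated concerns $L$, but the hypotheses of Assumption \ref{ass} are symmetric under duality in the sense that the adjoint $L^*$ enjoys analogous off-diagonal estimates, $H_\infty$-calculus, and Riesz transform boundedness on dual exponent ranges. Since $r'\in(s_-,s_+)$ forces $r\in((s_+)',(s_-)')$, the corresponding square-function bound for $L^*$ then applies. A secondary point is justifying the pointwise uniform bound on $\phi(tL)g$, which rests on off-diagonal estimates for $\phi(tL)$ itself; these are standard consequences of the $H_\infty$-calculus combined with the off-diagonal decay of the semigroup, and the sufficient decay of $\phi$ at infinity ensures that the transfer from the semigroup to $\phi(tL)$ goes through.
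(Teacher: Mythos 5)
Your reduction via Proposition \ref{prop:equivalence} and your treatment of the homogeneous piece $\|L^{\alpha/m}\Pi_g(f)\|_{L^{r'}}$ by Proposition \ref{prop:para1} are fine, and this is essentially the route the paper intends (it invokes Proposition \ref{prop:para1}, H\"older and Proposition \ref{prop:equivalence}, leaving the inhomogeneous details to the reader). The problem is your ``base case''. After pulling out $\sup_t|\phi(tL)g|\lesssim \M_{s_-}(g)$ and applying Cauchy--Schwarz in $t$, you are left with the square function $S_{\phi^*}(h)=\bigl(\int_0^\infty|\phi(tL^*)h|^2\frac{dt}{t}\bigr)^{1/2}$, whose $L^r$-boundedness you attribute to Proposition \ref{prop:Lp}. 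But Proposition \ref{prop:Lp} (via Remark \ref{rem:holo}) only covers square functions whose symbol satisfies $|\psi(z)|\lesssim |z|^{s}/(1+|z|^{2s})$, i.e.\ vanishes at $0$. Here $\phi(0)=-\int_0^\infty\psi(y)\,dy/y\neq 0$, and since $\phi(tL^*)h\to\phi(0)h$ as $t\to 0$, the integral $\int_0^\infty|\phi(tL^*)h|^2\frac{dt}{t}$ diverges at $t=0$ wherever $h\neq 0$: $S_{\phi^*}$ is not $L^r$-bounded, indeed not even finite a.e. This is exactly why Proposition \ref{prop:para1} is stated only for $\beta>0$: applying $L^\beta$ replaces the outer symbol by $\widetilde\phi(z)=z^\beta\phi(z)$, which does vanish at $0$, and the paper's duality proof hinges on that. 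Your proof of the $\beta=0$ estimate reuses the very argument that needs $\beta>0$, so it breaks at this step; duality with respect to $L^*$ (your ``main technical hurdle'') is not the issue.

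Consequently the bound $\|\Pi_g(f)\|_{L^{r'}}\lesssim\|f\|_{L^p}\|g\|_{L^q}$ --- or even the weaker $\lesssim\|f\|_{W^{\alpha,p}_L}\|g\|_{L^q}$, which is all the corollary requires --- is not established by your argument, and it cannot be obtained by a direct Cauchy--Schwarz in $t$, because $\psi(tL)f$ is the only factor in the trilinear pairing carrying a symbol vanishing at $0$, and a single bounded square function cannot absorb the $\frac{dt}{t}$ integration alone. A repair must inject the regularity of $f$: for instance, for the small-$t$ part write $\psi(tL)f=t^{\alpha/m}\widetilde\psi(tL)L^{\alpha/m}f$, bound $|\phi(tL^*)h|\lesssim\M_{s_-}(h)$ pointwise, and use the spare factor $t^{\alpha/m}$ to make the $t$-integral convergent before applying the square-function estimate to $\widetilde\psi(tL)L^{\alpha/m}f$; the large-$t$ part then requires an additional cancellation or Carleson-type argument rather than brute force. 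None of this appears in your proposal, so as written there is a genuine gap in the $L^{r'}$ (zeroth-order) piece.
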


\begin{proof}[Proof of Proposition \ref{prop:para1}]
We only show the homogeneous result (Proposition \ref{prop:para1}) and let the reader to check that the same argument still holds for the inhomogeneous framework. \\
Indeed, applying $L^\beta$ to $\Pi_g(f)$ yields
\begin{align*} 
L^\beta \Pi_g(f) & =  \int_{0}^\infty L^\beta \phi(tL) \left[\psi(tL)f \, \phi(tL)g \right] \frac{dt}{t} \\
 & =  \int_{0}^\infty \widetilde{\phi}(tL) \left[t^{-\beta} \psi(tL)f \, \phi(tL)g \right] \frac{dt}{t} \\
 & = \int_{0}^\infty \widetilde{\phi}(tL) \left[ \widetilde{\psi}(tL) L^\beta f \, \phi(tL)g \right] \frac{dt}{t},
\end{align*}
where we set $\widetilde{\phi}(z)=z^\beta \phi(z)$ and $\widetilde{\psi}(z)=z^{-\beta} \psi(z)$. So if the integer $N$ in $\phi$ and $\psi$ is taken sufficiently large, then $\widetilde{\phi}$ and $\widetilde{\psi}$ are still holomorphic functions with vanishing properties at $0$ and at infinity. As a consequence, we get
$$ L^\beta \Pi_g(f)  = \widetilde{\Pi}_g(L^\beta f)$$
with the new paraproduct $\widetilde{\Pi}$ built with $\widetilde{\phi}$ and $\widetilde{\psi}$.  Let us estimate this paraproduct.
By duality, for any smooth function $h\in L^r$ we have
\begin{align*}
  \langle L^\beta \Pi_g(f), h\rangle   & = \int \int_{0}^\infty \widetilde{\phi}(tL^*)h  \, \widetilde{\psi}(tL)(L^\beta f) \, \phi(tL)g \frac{dt}{t} d\mu \\
  & \leq \int \left(\int_0^\infty | \widetilde{\phi}(tL^*)h|^2 \frac{dt}{t} \right)^ {1/2} \left(\int_0^\infty |\widetilde{\psi}(tL)(L^\beta f)|^2 \frac{dt}{t} \right)^{1/2} \sup_t |\phi(tL)g| d\mu. 
\end{align*}
From the off-diagonal decay on the semigroup (\ref{eq:off-diag}), we know that
$$ \sup_t |\phi(tL)g(x)| \leq \M_{s_-}(g)(x)$$ 
and so by H\"older inequality
\begin{align*}
  \left| \langle L^\beta \Pi_g(f), h\rangle \right| \lesssim \left\| \left(\int_0^\infty |\widetilde{\phi}(tL^*)h|^2 \frac{dt}{t} \right)^ {1/2} \right\|_{L^{r}} \left\|\left(\int_0^\infty |\widetilde{\psi}(tL)(L^\beta f)|^2 \frac{dt}{t} \right)^{1/2}\right\|_{L^p}  \left\| {\M}_{s_-}g\right\|_{L^q}. 
\end{align*}
Since $\widetilde{\psi}$ and $\widetilde{\phi}$ are holomorphic functions vanishing at $0$ and having fast decays at infinity, we know from Proposition \ref{prop:Lp} that the two square functions are bounded on Lebesgue spaces. We also conclude the proof by duality, since it follows
\begin{align*}
  \left| \langle L^\beta \Pi_g(f), h\rangle \right| \lesssim \left\| h \right\|_{L^{r}} \left\|L^\beta f \right\|_{L^p}  \left\| g\right\|_{L^q}. 
\end{align*}
 \end{proof}

It remains to estimate the symmetric term $\Pi(f,g)$. For this term, the previous argument does not hold and we have to apply different arguments.

\begin{prop} \label{prop:para2} Let $\alpha\in(0,1)$, $\beta=\alpha/m\in(0,1/m)$ and assume Poincar\'e inequality $(P_s)$ for some $s<2$ and $ \delta>1+\frac{d}{s_-}$ (appearing in Assumption \ref{ass}). 
For $r'\in(1,\infty)$, $p_1\in[r',\infty)$, $q_1\in(r', \infty]$ and $q_2\in[r',\infty)$, $p_2\in(r', \infty]$ with $s\leq r'$, $r,r',p_1,q_2\in(s_-,s_+)$, $q_1,p_2\in(s_-,\infty]$ and
$$\frac{1}{r'}=\frac{1}{p_i}+\frac{1}{q_i}$$ we have
$$ \|L^\beta(\Pi(f,g))\|_{L^{r'}} \lesssim \|L^{\beta}(f)\|_{L^{p_1}} \|g\|_{L^{q_1}} + \|f\|_{L^{p_2}} \|L^{\beta}(g)\|_{L^{q_2}}.$$
\end{prop}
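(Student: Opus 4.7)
The plan is to parallel the proof of Proposition \ref{prop:para1} but with a different distribution of derivatives, since the symmetric structure of $\Pi(f,g)$ prevents us from directly commuting $L^{\beta}$ past one of the two inner factors. Setting $\chi(z):=z^{\beta}\psi(z)$, which still vanishes at $0$ and at $\infty$, we have $L^{\beta}\psi(tL)=t^{-\beta}\chi(tL)$ so that
\[
L^{\beta}\Pi(f,g)=\int_0^\infty t^{-\beta}\chi(tL)\bigl[\phi(tL)f\cdot\phi(tL)g\bigr]\,\frac{dt}{t}.
\]
The whole subtlety lies in the fact that, unlike for $\Pi_g(f)$, neither of the two inner factors $\phi(tL)f$, $\phi(tL)g$ vanishes as $t\to 0$: the $t^{-\beta}$ singularity and the absence of self-cancellation must be compensated by the mean-zero property $\chi(tL)({\bf 1})=0$ (a consequence of $L({\bf 1})=0$ and $\chi(0)=0$).

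Fix $x\in M$, $t>0$, and write $B:=B(x,t^{1/m})$. Because $\chi(tL)$ annihilates constants we may subtract, for each annulus $2^kB$, the local average $c_k:=[\phi(tL)f\cdot\phi(tL)g]_{2^kB}$. Applying the $L^{s_-}-L^\infty$ off-diagonal estimate \eqref{eq:off-diag} (valid for $\chi(tL)$ by Remark \ref{rem:holo}), we obtain the pointwise bound
\[
\bigl|\chi(tL)[\phi(tL)f\cdot\phi(tL)g](x)\bigr|\lesssim \sum_{k\geq 0} 2^{-k\delta}\Bigl(\aver{2^k B}\bigl|\phi(tL)f\,\phi(tL)g-c_k\bigr|^{s_-}d\mu\Bigr)^{1/s_-}.
\]
The Sobolev-Poincar\'e inequality (deduced from $(P_s)$ via Keith-Zhong, Theorem \ref{thm:kz}, and doubling) bounds the oscillation on the right by $2^k t^{1/m}(\aver{\sigma 2^k B}|\nabla(\phi(tL)f\cdot\phi(tL)g)|^s)^{1/s}$. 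The classical Leibniz rule then splits this into $|\nabla\phi(tL)f||\phi(tL)g|$ plus the symmetric factor, and a H\"older splitting inside the average -- with exponents compatible with $(p_1,q_1)$ on one side and $(p_2,q_2)$ on the other -- bounds each summand pointwise by products of Hardy-Littlewood maximal functions of $\nabla\phi(tL)f$ and of $\phi(tL)g$.

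The Riesz transform identity $\nabla\phi(tL)=t^{-(1/m-\beta)}\mathcal{R}\widehat\phi(tL)L^{\beta}$, with $\widehat\phi(z):=z^{1/m-\beta}\phi(z)$ holomorphic and vanishing at $0$ (since $\alpha<1$ forces $1/m-\beta>0$) and at $\infty$, recasts the factor $t^{1/m-\beta}\nabla\phi(tL)f$ as $\mathcal{R}\widehat\phi(tL)L^{\beta}f$. After integration in $t$ and a Cauchy-Schwarz / Fefferman-Stein type argument (using the pointwise bound $|\phi(tL)g|\leq \M_{s_-}g$ to compensate the non-vanishing of $\phi$ at $0$), the $L^\beta f$-side becomes a vertical square function of type \eqref{eq:func2} applied to $L^{\beta}f$, which is $L^{p_1}$-bounded by Proposition \ref{prop:Lp}, while the $g$-side is controlled on $L^{q_1}$ by an iterated Hardy-Littlewood maximal function of $g$. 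A final H\"older in $x$ with $\frac{1}{r'}=\frac{1}{p_1}+\frac{1}{q_1}$ then yields $\|L^{\beta}f\|_{L^{p_1}}\|g\|_{L^{q_1}}$, and the symmetric Leibniz choice produces $\|f\|_{L^{p_2}}\|L^{\beta}g\|_{L^{q_2}}$.

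The main technical obstacle is twofold. First, the dyadic series in $k$ converges only under the hypothesis $\delta>1+d/s_-$ stated in the proposition: the off-diagonal gain $2^{-k\delta}$ must defeat both the Poincar\'e gain $2^k$ and the volume inflation $2^{kd/s_-}$ incurred when one replaces an average over $\sigma 2^k B$ by a maximal function at the centre $x$. Second, the non-vanishing of $\phi$ at $0$ obstructs any naive Cauchy-Schwarz-in-$t$ on the $g$-side; one must carefully reorganize the $t$-integral (splitting dyadically or invoking a tent/Carleson-space device) to make the square function bounds converge while preserving the range restrictions on $r',p_i,q_i$ that guarantee boundedness of the relevant square and maximal functions on the target Lebesgue spaces.
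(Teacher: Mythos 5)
Your proposal follows essentially the same route as the paper's proof: commute $L^{\beta}$ so that the outer operator becomes $(tL)^{\beta}\psi(tL)$, exploit $L({\bf 1})=0$ to subtract a local average of $h_t:=\phi(tL)f\,\phi(tL)g$, invoke the $L^{s_-}$--$L^{\infty}$ off-diagonal decay together with $(P_s)$ to bound the oscillation by $t^{1/m}\M_{\bar s}(\nabla h_t)$, apply the Leibniz rule to $\nabla h_t$, and convert $\nabla$ to $\mathcal{R}L^{1/m}$ so that each summand becomes a vertical square function of type \eqref{eq:func2} applied to $L^{\beta}f$ (or $L^{\beta}g$) paired, via H\"older, with $\M_{s_-}g$ (or $\M_{s_-}f$). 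The hypothesis $\delta>1+d/s_-$, the use of $\widehat\phi(z)=z^{1/m-\beta}\phi(z)$ with $1/m-\beta>0$, and the reliance on Proposition \ref{prop:Lp} are all correctly identified.

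Two small corrections are in order. First, the claim that one ``may subtract, for each annulus $2^kB$, the local average $c_k$'' is not literally valid: the cancellation $\chi(tL)({\bf 1})=0$ lets you subtract one fixed constant (the paper uses $\aver{B(x,t^{1/m})}h_t$), and the replacement by the $k$-dependent average is then done inside the dyadic sum by the triangle inequality, which is exactly where the extra factor $2^{kd/s_-}$ comes from. Your attribution of this factor to ``replacing an average over $\sigma 2^kB$ by a maximal function at $x$'' is misplaced -- that step costs nothing because $x$ is the centre of every $2^kB$ -- but the resulting bound and the requirement $\delta>1+d/s_-$ you write down are the correct ones. Second, the final reduction in $t$ needs no tent-space or dyadic device: the paper simply tests against $h\in L^{r}$, factors $(tL)^{\beta}\psi(tL)$ into two copies of $\widetilde\psi(tL)$, applies Cauchy--Schwarz in $t$ to pair a square function of $t^{1/m-\beta}\nabla h_t$ with the dual square function $\bigl(\int_0^\infty|\widetilde\psi(tL^*)h|^2\,dt/t\bigr)^{1/2}$ in $L^{r}$, pulls $\M_{\bar s}$ out by Fefferman--Stein and $|\phi(tL)g|\lesssim\M_{s_-}g$ out of the $t$-integral, and finishes by H\"older in $x$.
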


\begin{proof} First due to the self-improving property of Poincar\'e inequality (Theorem \ref{thm:kz}), we know that  without loss of generality, we can assume $s<r'$. Let us first recall the main quantity
$$ L^\beta \Pi(f,g):= \int_{0}^\infty L^\beta \psi(tL) \left[\phi(tL)f \, \phi(tL)g \right] \frac{dt}{t}.$$
Using the cancellation property $L^\beta({\bf 1})=0$, it follows that for all $x$
$$ L^\beta \Pi(f,g)(x):= \int_{0}^\infty L^\beta \psi(tL) \left[\phi(tL)f \, \phi(tL)g - \aver{B(x,t^{1/m})}(\phi(tL)f \, \phi(tL)g) \right](x) \frac{dt}{t}.$$
Fix $t>0$ and consider $h_t:=\phi(tL)f \, \phi(tL)g$. Using the off-diagonal decay of $(tL)^\beta \psi(tL)$, we deduce 
\begin{align*}
 \left| L^\beta \psi(tL) \left[h_t - \aver{B(x,t^{1/m})}h_t \right](x) \right| &
 \\
 &\hspace{-1.5cm} \lesssim  t^{-\beta} \sum_{j\geq 0} 2^{-j \delta} \left(\aver{B(x,2^j t^{1/m})} \left|h_t(y) - \aver{B(x,t^{1/m})}h_t \right|^{s_-} d\mu(y)\right)^{1/s_-}.
\end{align*}
And, using Poincar\'e inequality $(P_{s})$, which implies $(P_{\bar{s}})$ with $\bar{s}=\max(s,s_-)$, it follows that
\begin{align*}
 \left(\aver{B(x,2^j t^{1/m})}  \left|h_t - \aver{B(x,t^{1/m})}h_t \right|^{s_-} d\mu\right)^{1/s_-} & \leq  \left( \aver{B(x,2^j t^{1/m})}  \left|h_t - \aver{B(x,2^j t^{1/m})}h_t \right|^{\bar{s}} d\mu\right)^{1/\bar{s}} \\
 & \hspace{0.5cm} + \left(\aver{B(x,t^{1/m})}  \left|h_t - \aver{B(x,2^j t^{1/m})}h_t \right|^{\bar{s}} d\mu\right)^{1/\bar{s}} \\
 & \lesssim \left(\aver{B(x,2^j t^{1/m})}  \left|h_t - \aver{B(x,2^j t^{1/m})}h_t \right|^{\bar{s}} d\mu \right)^{1/\bar{s}} \\
 & \lesssim 2^{j(1+\frac{d}{s_-})} t^{1/m} \left(\aver{B(x,2^j t^{1/m})}  \left| \nabla h_t \right|^{\bar{s}} d\mu(y)\right)^{1/\bar{s}} \\
 & \lesssim 2^{j(1+\frac{d}{s_-})} t^{1/m}  {\M}_{\bar{s}}[\nabla h_t](x).
\end{align*}
Finally due to the doubling property and $\delta>1+\frac{d}{s_-}$, it comes
\begin{align*}
 \left| L^\beta \psi(tL) \left[h_t - \aver{B(x,t^{1/m})}h_t \right](x) \right| & \lesssim \sum_{j\geq 0} t^{\frac{1}{m}-\beta} 2^{-(\delta-1-\frac{d}{s_-}) j} {\M}_{\bar{s}}[\nabla h_t](x) \\
 & \lesssim t^{\frac{1}{m}-\beta} {\M}_{\bar{s}}[\nabla h_t](x).
\end{align*}
Hence, for all smooth function $h\in L^{r}$, we have (with $\widetilde{\psi}(z)=z^{\beta/2}\psi(z)^{1/2}$)
\begin{align*}
\left|\langle L^{\beta}\Pi(f,g), h \rangle \right| & \leq \int \int_{0}^\infty  \left|\widetilde{\psi}(tL)\left[t^{-\beta} h_t \right]  \widetilde{\psi}(tL^*)(h)\right| \frac{dt}{t}d\mu \\
 & \lesssim \int \left(\int_{0}^\infty \left|\widetilde{\psi}(tL) \left[t^{-\beta} h_t \right]\right|^2 \frac{dt}{t}\right)^ {1/2}  \left( \int_0^\infty \left|\widetilde{\psi}(tL^*)(h)\right|^2\frac{dt}{t}\right)^{1/2}d\mu \\
 & \lesssim \left\| \left(\int_{0}^\infty \left|\M_{\bar{s}} \left[t^{\frac{1}{m}-\beta} \nabla h_t \right]\right|^2 \frac{dt}{t}\right)^ {1/2}  \right\|_{L^{r'}} \|h\|_{L^r},
\end{align*}
where we use boundedness of the square function (Proposition \ref{prop:Lp}). 
Using Fefferman-Stein inequality for $\M_{\bar{s}}$ (with $\bar{s}=\max(s,s_-)<2,r'$) and duality, we obtain
$$ \left\|\langle L^{\beta}\Pi(f,g) \right\|_{L^{r'}}  \lesssim \left\| \left(\int_{0}^\infty \left|\left[t^{\frac{1}{m}-\beta} \nabla h_t \right]\right|^2 \frac{dt}{t}\right)^ {1/2}  \right\|_{L^{r'}}.$$
Since $\nabla h_t = \nabla \phi(tL)f \, \phi(tL)g + \phi(tL)f \, \nabla \phi(tL)g$, we get two terms. The operator $\phi(tL)$ is still bounded by the maximal function and consequently, we deduce
\begin{align*}
\| L^{\beta}(\Pi(f,g))\|_{L^{r'}}  & \lesssim \left\| \left(\int_{0}^\infty \left| t^{1/m-\beta} \nabla \phi(tL)f  \right|^2 \frac{dt}{t}\right)^ {1/2} \M_{s_-}(g) \right\|_{L^{r'}} \\
& + \left\| \left(\int_{0}^\infty \left| t^{1/m-\beta} \nabla \phi(tL)g \right|^2 \frac{dt}{t}\right)^ {1/2} \M_{s_-}(f) \right\|_{L^{r'}}.
\end{align*}
Using H\"older inequality, we finally get 
\begin{align*}
\| L^{\beta}(\Pi(f,g))\|_{L^{r'}}  & \lesssim \left\| \left(\int_{0}^\infty \left| t^{\frac{1}{m}-\beta} \nabla \phi(tL)f\right|^2 \frac{dt}{t}\right)^ {1/2} \right\|_{L^{p_1}}\|g\|_{L^{q_1}} \\
& + \left\| \left(\int_{0}^\infty \left| t^{\frac{1}{m}-\beta}\nabla \phi(tL)g\right|^2 \frac{dt}{t}\right)^ {1/2}\right\|_{L^{q_2}}\|f\|_{L^{p_2}}.
\end{align*}
Since the Riesz transform ${\mathcal R}:= \nabla L^{-1/m}$ is bounded on $L^p$ for $p\in (s_-,s_+)$, hence it satisfies $\ell^2$-valued inequality and
 \begin{align*}
\| L^{\beta}(\Pi(f,g))\|_{L^{r'}}  & \lesssim \left\| \left(\int_{0}^\infty \left| t^{\frac{1}{m}-\beta} {\mathcal R} L^{1/m} \phi(tL)f\right|^2 \frac{dt}{t}\right)^ {1/2} \right\|_{L^{p_1}}\|g\|_{L^{q_1}} \\
& \hspace{1cm} + \left\| \left(\int_{0}^\infty \left| t^{\frac{1}{m}-\beta}{\mathcal R} L^{1/m} \phi(tL)g\right|^2 \frac{dt}{t}\right)^ {1/2}\right\|_{L^{q_2}}\|f\|_{L^{p_2}} \\
& \lesssim \left\| \left(\int_{0}^\infty \left| t^{\frac{1}{m}-\beta} L^{1/m} \phi(tL)f\right|^2 \frac{dt}{t}\right)^ {1/2} \right\|_{L^{p_1}}\|g\|_{L^{q_1}} \\
& \hspace{1cm} + \left\| \left(\int_{0}^\infty \left| t^{\frac{1}{m}-\beta}  L^{1/m}\phi(tL)g\right|^2 \frac{dt}{t}\right)^ {1/2}\right\|_{L^{q_2}}\|f\|_{L^{p_2}}.
\end{align*}
With $\widehat{\psi}(z)=\phi(z) z^{1/m-\beta}$, we obtain
 \begin{align*}
\| L^{\beta}(\Pi(f,g))\|_{L^{r'}}  & \lesssim \left\| \left(\int_{0}^\infty \left| \widehat{\psi}(tL) L^\beta f\right|^2 \frac{dt}{t}\right)^ {1/2} \right\|_{L^{p_1}}\|g\|_{L^{q_1}} \\
& \hspace{1cm} + \left\| \left(\int_{0}^\infty \left| \widehat{\psi}(tL) L^\beta g\right|^2 \frac{dt}{t}\right)^ {1/2}\right\|_{L^{q_2}}\|f\|_{L^{p_2}}
\\
& \hspace{1cm} \leq \| L^\beta f\|_{L^{p_1}}\|g\|_{L^{q_1}}+\|L^{\beta}g\|_{L^{q_2}}\|f\|_{L^{p_2}}.
\end{align*}
We used that the square functions are bounded on Lebesgue spaces, since $\beta<1/m$, and $\widetilde{\psi}$ is holomorphic and vanishes at $0$ and at infinity, see Proposition \ref{prop:Lp}.  
\end{proof}

We can obtain a non-homogeneous version that we do not detail. Since we consider non-homogeneous regularity, the previous argument is necessary only for low scale ($t \leq 1$) so only a local Poincar\'e inequality is required.

\begin{cor} Let $\alpha\in(0,1)$, $\beta=\alpha/m$ and assume local Poincar\'e inequality $(P_{s,loc})$ and $\delta>1+\frac{d}{s_-}$. For $r'\in(1,\infty)$, $p_1\in[r',\infty)$, $q_1\in(r', \infty]$ and $q_2\in[r',\infty)$, $p_2\in(r', \infty]$ with $s\leq r'$, $r,r',p_1,q_2\in(s_-,s_+)$, $q_1,p_2\in(s_-,\infty]$ and
$$\frac{1}{r'}=\frac{1}{p_i}+\frac{1}{q_i}$$ we have
$$ \|\Pi(f,g)\|_{W_L^{\alpha,r'}} \lesssim \|f\|_{W_L^{\alpha,p_1}} \|g\|_{L^{q_1}} + \|f\|_{L^{p_2}} \|g\|_{W_L^{\alpha,q_2}}.$$
\end{cor}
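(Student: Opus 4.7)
I would mirror the proof of Proposition \ref{prop:para2}, replacing every use of a global Poincar\'e inequality by a combination of $(P_{s,loc})$ and the off-diagonal decay parameter $\delta>1+\frac{d}{s_-}$. By Proposition \ref{prop:equivalence}, it suffices to control $\|\Pi(f,g)\|_{L^{r'}}$ and $\|L^\beta\Pi(f,g)\|_{L^{r'}}$ separately. The $L^{r'}$-bound is immediate from (\ref{paraproduct}): write $\Pi(f,g)=c^{-3}fg-\Pi_f(g)-\Pi_g(f)$, dominate $\|fg\|_{L^{r'}}$ by H\"older, and use Corollary \ref{cor:para1i} (whose conclusion already contains the $L^{r'}$-part of the non-homogeneous Sobolev norm) for the two asymmetric paraproducts.

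The bulk of the work is the bound on $\|L^\beta\Pi(f,g)\|_{L^{r'}}$. I would split the defining $t$-integral at $t=1$ and set $h_t:=\phi(tL)f\cdot\phi(tL)g$. On $t\geq 1$ no cancellation is needed: writing $L^\beta\psi(tL)=t^{-\beta}(tL)^\beta\psi(tL)$ and invoking the off-diagonal decay (\ref{eq:off-diag}) for $(tL)^\beta\psi(tL)$ gives the pointwise control $|L^\beta\psi(tL)h_t(x)|\lesssim t^{-\beta}\M_{s_-}(h_t)(x)$; since $\beta>0$ the $t$-integral converges, and the pointwise estimate $|h_t|\leq \M_{s_-}(f)\,\M_{s_-}(g)$ combined with the maximal theorem and H\"older's inequality produces a bilinear $L^{r'}$-bound dominated by $\|f\|_{L^{p_i}}\|g\|_{L^{q_i}}$.

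On $t\leq 1$ I would reproduce the computation of Proposition \ref{prop:para2}, but split the $k$-sum coming from the off-diagonal decay of $(tL)^\beta\psi(tL)$ according to whether $2^kt^{1/m}\leq 1$ or not. When $2^k t^{1/m}\leq 1$ the ball $B(x,2^kt^{1/m})$ has radius at most $1$, so $(P_{s,loc})$ applies and yields the same bound $\lesssim 2^{-k(\delta-1-d/s_-)}t^{1/m-\beta}\M_{\bar s}[\nabla h_t](x)$ as in Proposition \ref{prop:para2}, summable in $k$ thanks to $\delta>1+\frac{d}{s_-}$; the $t$-integral of this cancellation term is then handled exactly as in Proposition \ref{prop:para2}, via the Riesz transform and the square function estimate of Proposition \ref{prop:Lp}. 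On the complementary range $2^k t^{1/m}>1$ I drop the cancellation, bound the oscillation of $h_t$ by $\M_{s_-}(h_t)$, and sum to obtain $\lesssim t^{\delta/m-\beta}\M_{s_-}(h_t)(x)$, integrable in $t\in(0,1]$ because $\delta>1>m\beta$; this residue is absorbed as in the $t\geq 1$ case.

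The principal obstacle is precisely the range $2^kt^{1/m}>1$ with $t\leq 1$, where $(P_{s,loc})$ is silent. The hypothesis $\delta>1+\frac{d}{s_-}$ is used twice here: to sum the cancellation tail, and to produce a strictly positive power of $t$ in the non-cancellation residue so that both contributions remain integrable at $t=0$.
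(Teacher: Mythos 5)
Your plan is correct and follows exactly the route the paper intends: the paper gives no detailed proof of this corollary, only the remark preceding it that the argument of Proposition \ref{prop:para2} is needed solely at scales $t\le 1$ so that $(P_{s,loc})$ suffices, and your splitting at $t=1$, the treatment of $t\ge 1$ by off-diagonal decay and maximal functions, the $L^{r'}$-bound via \eqref{paraproduct} together with Corollary \ref{cor:para1i}, and the further splitting of the $k$-sum at $2^k t^{1/m}=1$ (the one genuine point the paper's remark glosses over, since the balls $2^kB(x,t^{1/m})$ can be large even for $t\le1$) supply precisely the missing details, with the exponent bookkeeping checking out. One small reorganization is needed in the write-up: since the small-$k$/large-$k$ splitting is a splitting of a pointwise upper bound rather than of the function $L^\beta\psi(tL)h_t$ itself, the residue term $t^{(\delta-\alpha)/m}\M_{s_-}(h_t)$ cannot be disposed of by a separate Minkowski argument but must be carried inside the duality/Cauchy--Schwarz-in-$t$ step of Proposition \ref{prop:para2}, where it merely contributes the harmless extra square function $\bigl(\int_0^1 t^{2(\delta-\alpha)/m}\,\M_{s_-}(h_t)^2\,\tfrac{dt}{t}\bigr)^{1/2}\lesssim \M_{s_-}\bigl(\M_{s_-}f\,\M_{s_-}g\bigr)$, which is controlled in $L^{r'}$ exactly as in your $t\ge1$ case.
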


Combining the decomposition \eqref{paraproduct} and Propositions \eqref{prop:para1} and \ref{prop:para2}, we get the following result.

\begin{thm}\label{thm:PSN} Assume (\ref{ass}) with $\delta>1+\frac{d}{s_-}$ and Poincar\'e inequality $(P_s)$.  Let $\alpha\in(0,1)$ and $r'>1$ with $s\leq r'<\infty$. For $p_1\in[r',\infty)$, $q_1\in(r', \infty]$ and $q_2\in[r',\infty)$, $p_2\in(r', \infty]$ verifying 
$$\frac{1}{r'}=\frac{1}{p_i}+\frac{1}{q_i}$$ and
$r,r',p_1,q_2\in(s_-,s_+)$, $q_1,p_2\in(s_-,\infty]$, we have
$$ \|L^{\alpha/m}(fg)\|_{L^{r'}} \lesssim \|L^{\alpha/m}(f)\|_{L^{p_1}} \|g\|_{L^{q_1}} + \|f\|_{L^{p_2}} \|L^{\alpha/m}(g)\|_{L^{q_2}}.$$
\end{thm}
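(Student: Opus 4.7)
The plan is a short assembly of the pieces already built. Starting from the spectral decomposition (\ref{paraproduct}),
$$ fg = c^3\bigl[\Pi(f,g) + \Pi_g(f) + \Pi_f(g)\bigr],$$
I would apply $L^{\alpha/m}$ and use the triangle inequality in $L^{r'}$, so that it suffices to bound each of $L^{\alpha/m}\Pi_g(f)$, $L^{\alpha/m}\Pi_f(g)$, and $L^{\alpha/m}\Pi(f,g)$ separately by the right-hand side of the claim.

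Next, I would dispatch the two asymmetric paraproducts by invoking Proposition \ref{prop:para1} with $\beta = \alpha/m$. For $\Pi_g(f)$ take $(p,q) = (p_1, q_1)$: the hypotheses $p_1 \in [r',\infty)$, $q_1 \in (s_-,\infty]$ (which follows from $q_1 \in (r',\infty]$ together with $r' > s_-$), and $p_1, r' \in (s_-,s_+)$ all hold by assumption, giving
$$ \|L^{\alpha/m}\Pi_g(f)\|_{L^{r'}} \lesssim \|L^{\alpha/m}f\|_{L^{p_1}}\|g\|_{L^{q_1}}. $$
By the symmetric statement of the same proposition, applied with $(p,q) = (q_2, p_2)$, one obtains the analogous bound
$$ \|L^{\alpha/m}\Pi_f(g)\|_{L^{r'}} \lesssim \|f\|_{L^{p_2}}\|L^{\alpha/m}g\|_{L^{q_2}}. $$

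For the symmetric term $\Pi(f,g)$, I would appeal directly to Proposition \ref{prop:para2}. Since $\alpha \in (0,1)$ one has $\beta = \alpha/m \in (0,1/m)$, and the index conditions on $(r',p_1,q_1,p_2,q_2)$, together with the global assumptions $(P_s)$ and $\delta > 1 + d/s_-$, are carried over verbatim from the theorem statement. The proposition then produces both cross-terms of the right-hand side in a single estimate. Adding the three bounds closes the proof.

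The main obstacle is not really mathematical, since the substantive work (square function estimates, off-diagonal decays, the use of Poincar\'e to compensate for the fact that $\phi(tL)f \cdot \phi(tL)g$ is not annihilated by $L^\beta$) is entirely contained in Propositions \ref{prop:para1} and \ref{prop:para2}; the remaining task is bookkeeping, namely checking that the asymmetric role of $p_1 \geq r'$ in the bound for $\Pi_g(f)$ versus $q_2 \geq r'$ in the bound for $\Pi_f(g)$ matches the form of the right-hand side, and that the technical range conditions $s \leq r'$ and $p_i, q_i, r' \in (s_-, s_+)$ required inside Proposition \ref{prop:para2} are consequences of the theorem's hypotheses.
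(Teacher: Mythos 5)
Your argument is exactly the paper's proof: Theorem \ref{thm:PSN} is obtained there precisely by combining the decomposition \eqref{paraproduct} with Proposition \ref{prop:para1} (applied to the two asymmetric paraproducts) and Proposition \ref{prop:para2} (for the symmetric term), with the exponent conditions carried over as you note. The only blemish is the harmless transposition in your second application of Proposition \ref{prop:para1}, where the choice should read $(p,q)=(p_2,q_2)$ so that the conclusion $\|L^{\alpha/m}\Pi_f(g)\|_{L^{r'}}\lesssim \|f\|_{L^{p_2}}\|L^{\alpha/m}g\|_{L^{q_2}}$ comes out as you state it.
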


\begin{proof}[Proof of Theorem \ref{Sobalg1}]
The proof follows now immediately from Theorem \ref{thm:PSN}.
\end{proof}

This point of view related to paraproducts is very suitable for studying the pointwise product of two functions. For more general nonlinearities, we would have to require a kind of ``paralinearization results" (as in the Euclidean case). This seems difficult and not really possible in such an abstract setting. However, we move the reader to a forthcoming work of Bernicot and Sire in this direction \cite{BeS}.

In order to get around this technical problem, we want to compare this approach with the one of \cite{CRT}, where the authors obtained characterizations of Sobolev norms involving square functionals (which are convenient to study the action of a nonlinearity). This is the aim of the following section.

\section{Characterization of Sobolev spaces via functionals}

As usual, we can expect to obtain a characterization of Sobolev norms by integrating the variations of the function. This will be the key tool for an alternative approach of Theorem  \ref{Sobalg2} and  \ref{Sobalg3}.

\begin{dfn} \label{def:S} Let $\rho>0$ be an exponent.
For a measurable function $f$ defined on $M$, $\alpha>0$ and $x\in M$, we define
$$ S_{\alpha}^\rho f(x)=\left(\int_{0}^{\infty} \left[\frac{1}{r^{\alpha}} \left(\frac{1}{\mu(B(x,r))}\int_{B(x,r)}|f(y)-f(x)|^\rho d\mu(y)\right)^{1/\rho}\right]^2\frac{dr}{r}\right)^{\frac{1}{2}}
$$
and
$$ S_{\alpha}^{\rho,loc} f(x)=\left(\int_{0}^{1} \left[\frac{1}{r^{\alpha}} \left(\frac{1}{\mu(B(x,r))}\int_{B(x,r)}|f(y)-f(x)|^\rho d\mu(y)\right)^{1/\rho}\right]^2\frac{dr}{r}\right)^{\frac{1}{2}}
$$
\end{dfn}
When $\rho=2$, these functionals are natural generalizations of those introduced by Strichartz in \cite{St}. \par
\noindent We will prove the following:

\begin{thm} \label{thm:caracterisation} Under Assumption (\ref{ass}) with $\delta$ sufficently large : $\delta>\alpha +d/s_-$  and  a local Poincar\'e inequality $(P_{s,loc})$ for some $s<2$, let $\alpha\in(0,1)$, $\beta=\alpha/m$. Then for all $p\in(s_-,s_+)$ with $s_-\leq \rho$ and $\max(\rho,s) <\min(2,p)$, there exist constants $c_1,c_2$ such that for all $f\in W^{\alpha,p}_L$  
$$ c_1 \left(\|L^{\beta}f\|_{L^p}+\|f\|_{L^p}\right) \leq  \|S_{\alpha}^{\rho,loc} f\|_{L^p}+\left\Vert f\right\Vert_{L^p} \leq c_2 \left(\|L^{\beta}f\|_{L^p}+\|f\|_{L^p}\right).$$
Moreover, if $M$ admits a global Poincar\'e inequality $(P_s)$, then  for all $f\in \dot W^{\alpha,p}_L$  
$$ c_1 \|L^{\beta}f\|_{L^p} \leq  \|S_{\alpha}^\rho f\|_{L^p} \leq c_2 \|L^{\beta}f\|_{L^p}.$$
\end{thm}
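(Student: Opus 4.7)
The plan is to prove the homogeneous equivalence $\|L^\beta f\|_{L^p}\simeq\|S_\alpha^\rho f\|_{L^p}$ under $(P_s)$ and to deduce the non-homogeneous version by localizing in $r$ and controlling large scales softly. Set $V_r^\sigma f(x):=\bigl(\aver{B(x,r)}|f(y)-f(x)|^\sigma d\mu(y)\bigr)^{1/\sigma}$, so that $S_\alpha^\rho f(x)=\bigl(\int_0^\infty r^{-2\alpha}(V_r^\rho f(x))^2\tfrac{dr}{r}\bigr)^{1/2}$, and note $V_r^{s_-}f\le V_r^\rho f$ by H\"older since $s_-\le\rho$. The central device is the three-term splitting
$$f(y)-f(x)=\bigl[f(y)-e^{-r^mL}f(y)\bigr]+\bigl[e^{-r^mL}f(y)-e^{-r^mL}f(x)\bigr]+\bigl[e^{-r^mL}f(x)-f(x)\bigr],$$
combined with the conservation $e^{-tL}{\bf 1}={\bf 1}$, the off-diagonal decays of Assumption \ref{ass}, and the square-function bounds of Proposition \ref{prop:Lp}.

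For the lower bound $\|L^\beta f\|_{L^p}\lesssim\|S_\alpha^\rho f\|_{L^p}$, I start from the Calder\'on reproducing formula
$$L^\beta f=c_\beta^{-1}\int_0^\infty t^{-\beta}\tilde\psi(tL)(I-e^{-tL})f\,\frac{dt}{t},\qquad\tilde\psi(z):=z^{-\beta}(1-e^{-z}),$$
which follows from the spectral identity $\int_0^\infty t^{-2\beta}(1-e^{-tz})^2\tfrac{dt}{t}=c_\beta z^{2\beta}$. Since $\tilde\psi$ is holomorphic on a sector, vanishes at $0$ and decays at infinity, Proposition \ref{prop:Lp} yields the $L^{p'}$-boundedness of its square function, and a duality argument produces
$$\|L^\beta f\|_{L^p}\lesssim\left\|\biggl(\int_0^\infty t^{-2\beta}|f-e^{-tL}f|^2\,\frac{dt}{t}\biggr)^{1/2}\right\|_{L^p}.$$
Combined with the conservation property, the off-diagonal decay gives $|f(x)-e^{-r^mL}f(x)|=|e^{-r^mL}(f-f(x))(x)|\lesssim\sum_{k\ge 0}2^{-\delta k}V_{2^kr}^{s_-}f(x)$; an uneven Cauchy-Schwarz in $k$ followed by the change of variable $r'=2^kr$ yields, provided $\delta>\alpha$, the pointwise bound
$$\biggl(\int_0^\infty r^{-2\alpha}|f-e^{-r^mL}f|^2\,\frac{dr}{r}\biggr)^{1/2}\!(x)\lesssim S_\alpha^{s_-}f(x)\le S_\alpha^\rho f(x),$$
and taking $L^p$-norms closes the direction.

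For the upper bound $\|S_\alpha^\rho f\|_{L^p}\lesssim\|L^\beta f\|_{L^p}$, the three pieces of the decomposition are treated separately. The third piece $c_r(x)=|e^{-r^mL}f(x)-f(x)|$ is controlled directly by Proposition \ref{prop:Lp} after rewriting $t^{-\beta}(1-e^{-tL})=\tilde\psi(tL)L^\beta$. The first piece has $L^\rho$-average over $B(x,r)$ dominated by $\mathcal{M}_\rho(f-e^{-r^mL}f)(x)$, and the $\ell^2(dr/r)$-valued Fefferman-Stein inequality (admissible since $\rho<\min(2,p)$) reduces it to the third piece. The delicate middle piece $b_r(x,y)=e^{-r^mL}f(y)-e^{-r^mL}f(x)$ is handled via the Haj\l{}asz pointwise inequality, a consequence of $(P_s)$:
$$|b_r(x,y)|\lesssim r\bigl[\mathcal{M}_s|\nabla e^{-r^mL}f|(x)+\mathcal{M}_s|\nabla e^{-r^mL}f|(y)\bigr],$$
whose $L^\rho$-average over $B(x,r)$ is $\lesssim r\cdot\mathcal{M}_{\max(\rho,s)}|\nabla e^{-r^mL}f|(x)$. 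Another Fefferman-Stein (valid since $\max(\rho,s)<\min(2,p)$) and the identity $t^{1/m-\beta}\nabla e^{-tL}f=\mathcal{R}\,\phi(tL)L^\beta f$, with $\phi(z)=z^{1/m-\beta}e^{-z}$ admissible in Proposition \ref{prop:Lp} because $\beta<1/m$, together with the $L^p$-boundedness of the Riesz transform $\mathcal{R}=\nabla L^{-1/m}$ and its $\ell^2$-extension, reduce the estimate to a square function on $L^\beta f$ covered by Proposition \ref{prop:Lp}.

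The main obstacle will be the middle term $b_r$, where Poincar\'e inequality, Riesz transform, and square function bounds must be chained carefully while keeping every exponent in the admissible range $\max(\rho,s)<\min(2,p)$. The hypothesis $\delta>\alpha+d/s_-$ supplies enough off-diagonal decay to absorb the doubling factor $\mu(2^kB)\lesssim 2^{kd}\mu(B)$ in the $k$-tail manipulations. The non-homogeneous version then follows by running the same arguments only for $r\in(0,1]$, so that only $(P_{s,\mathrm{loc}})$ is invoked, and absorbing the scales $r\ge 1$ into $\|f\|_{L^p}$ via the crude bounds $V_r^\rho f\lesssim\mathcal{M}_\rho f+|f|$ and $|f-e^{-r^mL}f|\lesssim\mathcal{M}_{s_-}f+|f|$, together with $\int_1^\infty r^{-2\alpha}\tfrac{dr}{r}<\infty$ (which uses $\alpha>0$).
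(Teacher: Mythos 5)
Your proposal is correct in substance, but it takes a genuinely different route from the paper for the harder direction $\|S_\alpha^\rho f\|_{L^p}\lesssim\|L^\beta f\|_{L^p}$. The paper does not use your three-term splitting against $e^{-r^m L}f$ at the matching scale; instead it expands $f=\sum_n f_n$ with $f_n=\int_{2^n}^{2^{n+1}}(tL)e^{-tL}f\,\frac{dt}{t}$, controls $|f_n|$ by a dyadic square function $g_n$ and $|\nabla f_n|$ by a gradient square function $h_n$, applies Poincar\'e (via a ball average plus a telescoping chain of shrinking balls) to each piece, and then balances the two bounds according to whether $n\le m(j+1)$ or not, closing with weighted Cauchy--Schwarz in the dyadic indices, Fefferman--Stein, and Proposition \ref{prop:Lp}. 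Your Strichartz-style comparison with $e^{-r^mL}f$ avoids the double $(j,n)$ summation and is arguably cleaner: the two ``error'' terms reduce directly to the square function of $t^{-\beta}(I-e^{-tL})f=\tilde\psi(tL)L^\beta f$, and the ``smooth'' term to $t^{1/m-\beta}\nabla e^{-tL}f=\mathcal{R}\,\phi(tL)L^\beta f$, exactly the objects covered by Proposition \ref{prop:Lp} and the $\ell^2$-valued Riesz bound; for the converse inequality your reproducing formula with $(I-e^{-tL})^2$ parallels the paper's use of $(tL)^{1-\beta}e^{-tL}$ together with conservation and off-diagonal decay, and in fact your normalization over the enlarged balls only costs $\delta>\alpha$ where the paper's bookkeeping costs $\delta>\alpha+d/s_-$. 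One small inaccuracy: from the symmetric Haj\l{}asz inequality, the $L^\rho$-average of $b_r(x,\cdot)$ over $B(x,r)$ is controlled by $r\bigl[\mathcal{M}_s|\nabla e^{-r^mL}f|(x)+\mathcal{M}_\rho\bigl(\mathcal{M}_s|\nabla e^{-r^mL}f|\bigr)(x)\bigr]$, an iterated maximal function, not directly $r\,\mathcal{M}_{\max(\rho,s)}|\nabla e^{-r^mL}f|(x)$; this is harmless since $\max(\rho,s)<\min(2,p)$ lets you apply the vector-valued Fefferman--Stein inequality twice, or you can recover your cleaner bound by splitting through the ball average as the paper does (Poincar\'e at exponent $\max(\rho,s)$ for the averaged part, telescoping for the pointwise part). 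With that adjustment, and running the scales $r\le 1$ only (so that $(P_{s,loc})$ suffices) with the crude large-scale bounds you indicate for the non-homogeneous statement, your argument delivers the theorem under the stated hypotheses.
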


\begin{cor} As a consequence, under the above global assumptions, we obtain that
$$ \| \cdot \|_{\dot{W}^{\alpha,p}_L} \simeq \left\| S_{\alpha}^\rho (\cdot)\right\|_{L^p}$$
for all $p \in (\max(s,s_-),s_+)$.

In particular, the Sobolev space $W^{\alpha,p}_L$ depends neither on $L$ nor on $p\in (\max(s,s_-),s_+)$. 
\end{cor}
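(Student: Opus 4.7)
My plan is to derive the corollary directly from the global part of Theorem \ref{thm:caracterisation}. First, I would fix a single admissible exponent $\rho$ that works uniformly across the whole range of $p$: the choice $\rho:=s_-$ seems natural, since then $s_-\leq\rho$ is automatic, and for every $p\in(\max(s,s_-),s_+)$ the strict inequalities $s_-<p$ (from the lower bound on $p$), $s_-<2$ and $s<2$ (built into the standing assumptions), together with $s<p$ (again from the lower bound on $p$), give $\max(\rho,s)<\min(2,p)$. With this choice, the hypotheses of Theorem \ref{thm:caracterisation} are met for every $p$ in the range, and the global statement of that theorem delivers the equivalence
$$ c_1\|L^{\alpha/m}f\|_{L^p}\leq \|S_\alpha^{\rho}f\|_{L^p}\leq c_2\|L^{\alpha/m}f\|_{L^p},$$
which is exactly the claimed $\|\cdot\|_{\dot W^{\alpha,p}_L}\simeq \|S_\alpha^{\rho}(\cdot)\|_{L^p}$.

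Next, I would deduce the independence from $L$ as a structural consequence. The functional $S_\alpha^\rho$ in Definition \ref{def:S} depends only on the Riemannian metric–measure structure of $M$ and on the two scalar parameters $\alpha$ and $\rho$; the operator $L$ is absent from its definition. Consequently, if $L_1$ and $L_2$ are two operators on $M$ that both satisfy Assumption \ref{ass} (with the same value of $m$ and the same $s_-$), applying the equivalence above to each yields
$$ \|L_1^{\alpha/m}f\|_{L^p}\simeq \|S_\alpha^{\rho}f\|_{L^p}\simeq \|L_2^{\alpha/m}f\|_{L^p},$$
so $\dot W^{\alpha,p}_{L_1}=\dot W^{\alpha,p}_{L_2}$ as sets of functions with equivalent semi-norms. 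In particular, the scale depends only on $M,\alpha,p$ and not on which admissible $L$ one picks. Furthermore, since the single functional $S_\alpha^{s_-}$ realizes the equivalence for every $p$ in $(\max(s,s_-),s_+)$, no $p$-dependent adjustment of the characterizing functional is required, which accounts for the parenthetical independence in $p$ mentioned in the statement.

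I do not expect any real obstacle: the substantive work is entirely carried by Theorem \ref{thm:caracterisation}, and the only point requiring a moment of care is verifying that the choice $\rho=s_-$ remains admissible uniformly in $p$ over the whole range, which amounts to the elementary inequality chain above. Everything else is a formal application of the equivalence and the observation that $S_\alpha^\rho$ is an $L$-free functional of $f$.
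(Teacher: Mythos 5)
Your argument is correct and matches the proof the paper intends (the paper offers no explicit proof beyond the phrase ``as a consequence''). Fixing $\rho=s_-$ is the right move: it automatically satisfies $s_-\leq\rho$, and for $p\in(\max(s,s_-),s_+)$ the chain $s_-<2$, $s<2$, $s_-<p$, $s<p$ gives $\max(\rho,s)<\min(2,p)$, so Theorem \ref{thm:caracterisation} applies uniformly in $p$ with this single $\rho$. Reading off the global part of that theorem gives the displayed seminorm equivalence, and the $L$-independence then follows exactly as you say, because $S_\alpha^\rho$ is built only from the metric-measure structure. Two small refinements you could add: (i) your restriction ``with the same value of $m$ and the same $s_-$'' is unnecessary --- even if $L_1,L_2$ come with different $m$'s and $s_-$'s, each is separately equivalent to $\|S_\alpha^{\rho_i}(\cdot)\|_{L^p}$ for its own admissible $\rho_i$, and applying the theorem twice to the same $L$ with two admissible $\rho$'s shows $\|S_\alpha^{\rho_1}(\cdot)\|_{L^p}\simeq\|S_\alpha^{\rho_2}(\cdot)\|_{L^p}$, which glues the two chains; and (ii) the final sentence of the corollary speaks of the non-homogeneous space $W^{\alpha,p}_L$, so one should also note that $\|f\|_{W^{\alpha,p}_L}\simeq\|f\|_{L^p}+\|S_\alpha^\rho f\|_{L^p}$ (by combining the homogeneous equivalence with the definition of the non-homogeneous norm), which is again $L$-free. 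As you observe, the ``independence on $p$'' claim in the corollary cannot mean the spaces coincide as $p$ varies; your reading --- that one and the same $L$-free functional $S_\alpha^{s_-}$ characterizes the seminorm across the whole range of $p$ --- is the sensible interpretation.
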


\begin{rem}[Self-improving property of the square functionals $S_{m\alpha}^\rho$]
This theorem shows that for $\beta\in(0,1/m)$, $p\in(s_-,s_+)$ and Poincar\'e inequality $(P_s)$ with $s\leq \min(2,p)$, we have
$$ \|L^{\alpha}f\|_{L^p} \simeq  \|S_{m\beta}^\rho f\|_{L^p} $$
as soon as
$$s_-\leq \rho <\min(2,p).$$
Since the map $\rho \rightarrow S_{m\beta}^\rho f(x)$ is non-decreasing, we deduce the following self-improving property~:

\mb
{\bf Property :} Under the above assumptions, if $S_{m\beta}^{s^-} \in L^p $ then $S_{m\beta}^{\rho} \in L^p$ for every $s_-\leq \rho <\min(2,p)$.
\end{rem}

The proof of Theorem \ref{thm:caracterisation} follows the ideas of \cite{CRT}. We  give the proof under global Poincar\'e inequality $(P_s)$. We refer to  \cite{CRT} for the local case, involving the local functional $S^{\rho,loc}_{\alpha}$.

\subsection{Proof of $\|S_{\alpha}^\rho f\|_{L^p} \lesssim \|L^{\beta}f\|_{L^p}$}
Assume that  $M$ admits the global Poincar\'e inequality $(P_s)$.
This paragraph is devoted to the proof of
$$ \|S_{\alpha}^\rho f\|_{L^p} \lesssim \|L^{\beta} f\|_{L^p} $$
requiring the assumption $\max(\rho,s) <\min(2,p)$.

\begin{proof} We first decompose the identity with the semigroup as  
\begin{align*}
 f& =-\int_0^{\infty}\frac{ \partial}{\partial{t}}(e^{-tL}f) dt \\
  & =\int_0^{\infty}(tL) e^{-tL}f \frac{dt}{t} \\
  & =\sum_{n=-\infty}^{\infty} \int_{2^n}^{2^{n+1} } (tL)e^{-tL}f \frac{dt}{t}.
\end{align*}
We set 
$$f_n:=\int_{2^n}^{2^{n+1} } (tL)e^{-tL}f \frac{dt}{t} $$
the piece at the scale $2^{n}$. Then, let us define
$$g_n:= \left(\int_{2^{n-1}}^{2^n } | (tL) e^{-tL}f)|^2  \frac{dt}{t^2}\right)^{1/2} $$
and
$$ h_n := \left(\int_{2^{n-1}}^{2^n}  |t^{1/m} \nabla (tL)e^{-tL} f)|^2 \frac{dt}{t^2}\right)^{1/2}.
$$ 
Observe that for all $x\in M$ and all integer $n$
\begin{equation}
 |f_n(x)| \leq {2^{n/2}} g_{n+1}(x). \label{eq:fngn}
\end{equation}
We claim that
\begin{equation}
|\nabla f_n| \lesssim 2^{n(1/2-1/m)} h_{n+1}. \label{eq:fnhn}
\end{equation}
Indeed, we have
$$
|\nabla f_n|\leq  \int_{2^n}^{2^{n+1} } \left|\nabla (tL)e^{-tL}f \right| \frac{dt}{t}$$
and then Cauchy Schwarz inequality with $t\simeq 2^n$ concludes also the proof of (\ref{eq:fnhn}). Using these elements, we will now estimate $S_{\alpha}f$:
\begin{align*}
S_{\alpha}^\rho f(x)^2 &= \int_0^{\infty} \left[\frac{1}{r^{\alpha}} \left(\frac{1}{\mu(B(x,r))} \int_{B(x,r)}|f(x)-f(y)|^\rho d\mu(y)\right)^{1/\rho} \right]^2\frac{dr}{r}
\\
& = \sum_{j=-\infty}^{+\infty}\int_{2^j}^{2^{j+1}} \left[\frac{1}{r^{\alpha}}\left(\frac{1}{\mu(B(x,r))} \int_{B(x,r)}|f(x)-f(y)|^\rho d\mu(y)\right)^{1/\rho} \right]^2\frac{dr}{r}
\\
&\lesssim \sum_{j=-\infty}^{+\infty}\left[\frac{1}{2^{j\alpha}} \left(\frac{1}{\mu(B(x,2^j))} \int_{B(x,2^{j+1})}|f(x)-f(y)|^\rho d\mu(y)\right)^{1/\rho} \right]^2.
\end{align*}
And the decomposition of $f$ by means of $f_n$ yields
\begin{align*}
\left(\aver{B(x,2^{j+1})} |f(x)-f(y)|^\rho d\mu(y)\right)^{1/\rho} & \leq  \sum_{n=-\infty}^{+\infty} \left(\aver{B(x,2^{j+1})} |f_n(x)-f_n(y)|^\rho d\mu(y)\right)^{1/\rho}.
\end{align*}
 Then for a fixed integer $n$, we split
  \begin{align*}
 \left(\aver{B(x,2^{j+1})} |f_n(x)-f_n(y)|^\rho d\mu(y)\right)^{1/\rho}
 &\lesssim  |f_n(x)-f_{n,B(x,2^{j+1})}| \\
&+ \left(\aver{B(x,2^{j+1})} |f_n(y)-f_{n,B(x,2^{j+1})}|^\rho d\mu(y)\right)^{1/\rho}
\\
&:=I+II.
 \end{align*}
Using doubling and Poincar\'e inequality $(P_s)$, we easily estimate $II$:
 \begin{align}
  II=\left(\aver{B(x,2^{j+1})} |f_n(y)-f_{n,B(x,2^{j+1})}|^\rho d\mu(y)\right)^{1/\rho}& \lesssim 2^{j+1}  {\mathcal M}_{\max(\rho,s)}(|\nabla f_n|)(x) 
  \\
  & \lesssim 2^{j} 2^{n(1/2-1/m)} \mathcal{M}_{\max(\rho,s)} h_{n+1}(x). \label{eq:fnhn2b}
\end{align}
It remains to estimate $I$. Take $B_0= B(x,2^{j+1})$. We construct for $i\geq 1$, the balls $B_{i} \subset B_0$ containing $x$ such that $B_{i}\subset B_{i-1}$ and $r(B_{i})=\frac{1}{2} r(B_{i-1})$. Since $f_{B_{i}}\underset{i\rightarrow\infty}{\longrightarrow}f(x)\;\mu-a.e.$,  using Poincar\'e inequality $(P_s)$, we get $\mu-a.e.$
\begin{align}
I=|f_n(x)-f_{n,B_0}|&\leq \sum_{i=1}^{\infty}|f_{n,B_{i}}-f_{n,B_{i-1}}| \nonumber
\\
&\lesssim \sum_{i=0}^{\infty}r(B_{i}) \left(\frac{1}{\mu(B_i)}\int_{B_i} |\nabla f_n|^s d\mu \right)^{1/s} \nonumber
\\
&\lesssim \sum_{i=0}^{\infty} r(B_{i}) {\mathcal M}_{s}(|\nabla f_n|)(x) \nonumber
\\
&\lesssim {\mathcal M}_{s}(|\nabla f_n|)(x)  \sum_{i=0}^{\infty} 2^{j+2-i} \nonumber
\\
&\lesssim 2^{j} 2^{n(1/2-1/m)} \mathcal{M}_s h_{n+1}(x).
 \label{eq:fnhn2}
\end{align}
Finally, using (\ref{eq:fngn}) for $j+1> (n-1)/m$ and (\ref{eq:fnhn2b}) with (\ref{eq:fnhn2}) for $j+1\leq (n-1)/m$, we obtain
\begin{align*}
 \left(\aver{B(x,2^{j+1})} |f(x)-f(y)|^\rho d\mu(y)\right)^{1/\rho} &\lesssim \sum_{n=-\infty}^{m(j+1)} 2^{n/2}{\mathcal M}_\rho (g_{n+1})(x)
 \\
 &+\sum_{n=m(j+1)+1}^{+\infty} 2^{j} 2^{n(1/2-1/m)} {\mathcal M}_{\max(\rho,s)} h_{n+1}(x).
\end{align*}
Let $c_n:=c_n(x):={\mathcal M}_\rho (g_{n})(x) + {\mathcal M}_{\max(\rho,s)} (h_{n})(x)$. Therefore 
\begin{align*}
 \sum_{j=-\infty}^{+\infty}\left[\frac{1}{2^{j\alpha}} \left(\aver{B(x,2^{j+1})}|f(x)-f(y)|^\rho d\mu(y)\right)^{1/\rho} \right]^2& \lesssim  \sum_{j=-\infty}^{+\infty} 2^{-2j\alpha} \left[\sum_{n=-\infty}^{m(j+1)} 2^{n/2}c_{n+1}\right]^2
 \\
 &+ \sum_{j=-\infty}^{+\infty}2^{-2j\alpha}\left[\sum_{n=m(j+1)+1}^{\infty} 2^j 2^{n(1/2-1/m)} c_{n+1}\right]^2.
\end{align*}
Let $$A:= \sum_{j=-\infty}^{+\infty}2^{-2j\alpha}\left[\sum_{n=-\infty}^{m(j+1)}2^{n/2}c_{n+1}\right]^2.$$
Choosing $\epsilon \in (0,\beta)$ and using Cauchy-Schwartz inequality yield
\begin{align*}
A &=  \sum_{j=-\infty}^{+\infty}2^{-2j\alpha}\left[\sum_{n=-\infty}^{m(j+1)}2^{n \epsilon}c_{n+1}2^{n/2}2^{-n\epsilon}\right]^2
\\ 
&\leq \sum_{j=-\infty}^{+\infty}2^{-2j\alpha}\left[\sum_{n=-\infty}^{m(j+1)}2^{2n\epsilon}\right]\left[\sum_{n=-\infty}^{m(j+1)} c_{n+1}^2 2^{n(1-2\epsilon)} \right]
\\
&\lesssim  \sum_{j=-\infty}^{+\infty}2^{2j(m\epsilon-\alpha)}  \sum_{n=-\infty}^{m(j+1)} c_{n+1}^2 2^{n(1-2\epsilon)}
\\
& \lesssim  \sum_{n=-\infty}^{+\infty} c_{n+1}^2 2^{n(1-2\epsilon)} \sum_{j=n/m-1}^{+\infty}2^{2j(m\epsilon-\alpha)}
\\
&\lesssim \sum_{n=-\infty}^{+\infty} c_{n+1}^2 2^{n(1-2\epsilon)}2^{2n(\epsilon - \beta)}
\\
&\lesssim \sum_{n=-\infty}^{+\infty} c_{n}^2 2^{n(1- 2\beta)}. 
\end{align*}
Let  $$B:= \sum_{j=-\infty}^{+\infty}2^{-2j\alpha}\left[\sum_{n=m(j+1)+1}^{\infty} 2^j 2^{n(1/2-1/m)} c_{n+1}\right]^2= \sum_{j=-\infty}^{+\infty} 2^{2j(1-\alpha)}\left[\sum_{n=m(j+1)+1}^{\infty} 2^{n(1/2-1/m)} c_{n+1}\right]^2.$$
Then, like  we did for $A$, considering $\epsilon \in (0, 1-\alpha)$, we obtain
\begin{align*}
B &=  \sum_{j=-\infty}^{+\infty}2^{2j(1-\alpha)}\left[\sum_{n=m(j+1)+1}^{\infty}2^{-n/m(1-\alpha-\epsilon)}c_{n+1}2^{n(1/2-1/m)} 2^{n/m(1-\alpha-\epsilon)}\right]^2
\\ 
&\lesssim \sum_{j=-\infty}^{+\infty}2^{2j\epsilon }\sum_{n=m(j+1)+1}^{\infty}c_{n+1}^2 2^{2n(1-\alpha-\epsilon)/m+n(1-2/m)}
\\
& \lesssim \sum_{n=-\infty}^{+\infty} 2^{n(1-2\alpha/m-2\epsilon/m)} c_{n+1}^2 \sum_{j=-\infty}^{(n-1)/m-1} 2^{2j\epsilon}
\\
&\lesssim \sum_{n=-\infty}^{+\infty} c_{n+1}^2 2^{n(1-2\beta)}.
\end{align*}
We deduce that
$$ S_{\alpha}^\rho f(x) \lesssim \left(\sum_{n=-\infty}^{+\infty} 2^{n(1-2\beta)} \left[{\mathcal M}_\rho (g_{n+1})+{\mathcal M}_{\max(\rho,s)} (h_{n+1}) \right]^{2}\right)^{1/2}.$$
Then for all $p>s$, using Fefferman-Stein inequality for ${\mathcal M}_s$ and ${\mathcal M}_\rho$ which are bounded on $L^p$ due to $s,\rho<p$ and $s,\rho <2$, we finally obtain 
\begin{align*}
\|S_{\alpha }^\rho f\|_{L^p} & \lesssim \left\| \left(\sum_{n=-\infty}^{+\infty} 2^{n(1-2\beta)} \left[{\mathcal M}_\rho g_n+{\mathcal M}_{\max(\rho,s)} h_n \right]^2 \right)^{1/2} \right\|_{L^p}
\\
&\lesssim \left\| \left(\int_{0}^{\infty } t^{1-2\beta} | (tL) e^{-tL}f)|^2 \frac{dt}{t^2}\right)^{1/2}\right\|_{L^p}+\left\| \left( \int_{0}^{\infty } t^{1-2\beta} |t^{1/m} \nabla (tL)e^{-tL} f)|^2 \frac{dt}{t^2}\right)^{1/2} \right\|_{L^p} \\
& \lesssim  \left\| \left(\int_{0}^{\infty } | (tL)^{1-\beta} e^{-tL} L^{\beta}f)|^2 \frac{dt}{t}\right)^{1/2}\right\|_{L^p}+\left\| \left( \int_{0}^{\infty } |t^{1/m} \nabla (tL)^{1-\beta} e^{-tL} L^\beta f)|^2 \frac{dt}{t^2}\right)^{1/2} \right\|_{L^p}.
\end{align*}
Since the two quadratic functionals are bounded (see Proposition \ref{prop:Lp}), we also conclude the proof of
$$ \|S_{\alpha }^\rho f\|_{L^p} \lesssim \|L^\beta(f)\|_{L^p}.$$
\end{proof}

\subsection{Proof of $\|L^{\beta}f\|_{L^p}\lesssim \|S_{\alpha}^\rho f\|_{L^p} $ for $p\in(s_-,s_+)$}
 
 This paragraph is devoted to the proof of
$$ \|L^{\beta} f\|_{L^p}\lesssim \|S_{\alpha}^\rho f\|_{L^p} $$
requiring the assumption $s_- \leq \rho$.
 
 \begin{proof} We begin by noting that from Proposition \ref{prop:Lp} with duality, we have  
for all $p\in(s-_,\infty)$,
 $$\|L^{\beta }f\|_{L^p}\leq  C_{\beta,p} \left\| \left(\int_0^{\infty}|(tL)^{1-\beta}e^{-tL}L^{\beta}f|^2\frac{dt}{t}\right)^{1/2}\right\|_{L^p}.$$
 So it suffices to prove that pointwise
 $$ \left(\int_{0}^{\infty}t^{1-2\beta}|Le^{-tL}f(x)|^2dt \right)^{1/2} \lesssim S_{\alpha}^\rho f(x).$$
 The $L^2$ analyticity of  the semigroup (see subsection \ref{subsec:semigroup}), the first point of assumption 1.8 and $e^{-tL}(1)=1$, yield with $B=B(x,t^{1/m})$
 \begin{align*}
 |Le^{-tL}f(x)|&= \left| t^{-1} (tL){\mathcal A}_t(f-f(x))(x)\right|
 \\
 &\lesssim t^{-1} \sum_{k\geq 0} 2^{- k \delta} \left(\aver{2^k B} |f(y)-f(x)|^{s_-} d\mu(y) \right)^{1/s_-} \\
& := \sum_{k\geq 0} I_t(k).
\end{align*}
Using this estimate and Minkowski inequality, one  obtains
\begin{align*}
\left(\int_0^\infty t^{1-2\beta} |\sum_{k\geq 0} I_t(k) |^2 dt\right)^{1/2} &
 \\
 &\hspace{-3cm}  \lesssim \sum_{k=1}^{\infty} 2^{-k\delta} \left(\int_0^{\infty} \frac{t^{-1-2\beta}}{\mu(B(x,t^{1/m}))^{2/s_-}}\left(\int_{B(x,2^{k+1}t^{1/m})} |f(x)-f(y)|^{s_-} d\mu(y)\right)^{2/s_-} dt\right)^{1/2}
\\
&\hspace{-3cm} \lesssim \sum_{k=0}^{\infty} 2^{-k\delta} \left(\int_0^\infty 2^{2k\alpha} \frac{r^{-1-2\alpha}}{\mu(B(x, 2^{-k-1}r)^{2/s_-}}\left(\int_{B(x,r)} |f(x)-f(y)|^{s_-} d\mu(y)\right)^{2/s_-} dr \right)^{1/2}
\\
&\hspace{-3cm}\lesssim \sum_{k=0}^{\infty} 2^{-k\delta} 2^{k\alpha}2^{kd/s_-} \left(\int_0^{\infty}\frac{r^{-1-2\alpha}}{\mu(B(x,r))^{2/s_-}}\left(\int_{B(x,r)} |f(x)-f(y)|^{s_-} d\mu(y)\right)^{2/s_-} dr\right)^{1/2}
\\
&\hspace{-3cm}\lesssim \left(\sum_{k=1}^{\infty} 2^{-k\delta} 2^{k\alpha} 2^{kd/s_-}\right) S_{\alpha}^{s_-} f(x) \\
&\hspace{-3cm}\lesssim S_{\alpha}^{s_-} f(x) \lesssim S_{\alpha}^{\rho} f(x)
\end{align*}
where in the last inequality, we use that the sum is finite since $ \delta>\alpha +d/s_-$ and then $s_-\leq \rho$.
\end{proof}

Now we are able to give an alternative proof of Theorem \ref{Sobalg2}:
\subsection{Second proof of  Theorem \ref{Sobalg2}}

\begin{rem} As remarqued in the introduction, Theorem \ref{Sobalg2} is a direct consequence of Theorem \ref{Sobalg1}, which was already proved in Section \ref{sec:paraproduit}, using the `paraproducts" point of view. Here, we obtain another proof via the previous characterization, involving the functionals $S^\rho_\alpha$. 
\end{rem} 

We will give the proof in the homogeneous case. The proof of the non-homogeneous case is analogous using $S_{\alpha}^{\rho,loc}$ instead of $S_\alpha^\rho$.
Let $f,\,g \, \in \dot W_L^{\alpha,p}$. Let $x\in M$. We take $\rho=s_-$ here. Then
\begin{align*}
S_{\alpha}^\rho (fg)(x)&=\left(\int_{0}^{\infty} \left[\frac{1}{r^{\alpha}} \left(\frac{1}{\mu(B(x,r))}\int_{B(x,r)}|f(y)g(y)-f(x)g(x)|^\rho d\mu(y)\right)^{1/\rho}\right]^2\frac{dr}{r}\right)^{\frac{1}{2}}
\\
&\leq\left(\int_{0}^{\infty} \left[\frac{1}{r^{\alpha}} \left(\frac{1}{\mu(B(x,r))}\int_{B(x,r)}|(f(y)-f(x))g(y)|^\rho d\mu(y)\right)^{1/\rho}\right]^2\frac{dr}{r}\right)^{\frac{1}{2}}
\\
&+\left(\int_{0}^{\infty} \left[\frac{1}{r^{\alpha}} \left(\frac{1}{\mu(B(x,r))}\int_{B(x,r)}|f(x)(g(y)-g(x))|^\rho d\mu(y)\right)^{1/\rho}\right]^2\frac{dr}{r}\right)^{\frac{1}{2}}
\\
&\leq \|g\|_{L^\infty}S_{\alpha}^\rho (f)(x)+ \|f\|_{L^\infty}S_{\alpha}^\rho (g)(x).
\end{align*}
Now using the second assertion of Theorem \ref{thm:caracterisation}, we deduce that
\begin{align*}
\|L^{\alpha/m}(fg)\|_{L^p} &\lesssim\|S_{\alpha}^\rho (fg)\|_{L ^p}
\\
&\lesssim \|g\|_{L^\infty}\|S_{\alpha}^\rho (f)\|_{L^p}+ \|f\|_{L^\infty}\|S_{\alpha}^\rho (g)\|_{L^p} 
\\
&\lesssim \|L^{\alpha/m}(f)\|_{L^p} \|g\|_{L^\infty}+\|L^{\alpha/m}(g)\|_{L^p}\|f\|_{L^\infty}
\end{align*}
which ends the proof of Theorem \ref{Sobalg2}.

\begin{proof}[Proof of Theorem \ref{Sobalg3}]
The proof of this theorem follows immediatly from Theorem \ref{Sobalg2} and the Sobolev embedding
$$
W^{\alpha,p}_L\subset L^{\infty}$$
when $\alpha p>d$ (see Corollary \ref{cor:Li}).
\end{proof}

\section{Higher order Sobolev spaces and nonlinearities preserving Sobolev spaces }

\subsection{Chain rule and Higher-order Sobolev spaces with a sub-Riemannian structure} \label{subsec:sub}

Since previous results cannot be extended (in such a general context) for higher order Sobolev spaces, we present how it is possible to do it in the context of a sub-Riemannian structure. Indeed such properties allow us to get a ``chain rule".

\subsubsection{Sub-Riemannian structure}

We assume that there exists $X:=\{X_k\}_{k=1,...,\K}$ a finite family of real-valued vector fields (so $X_k$ is defined on $M$ and $X_k(x)\in TM_x$) such that
\be{L} L=-\sum_{k=1}^\K X_k^2.\ee
 We identify the ${X_k}$'s with the first order
differential operators acting on Lipschitz functions  defined on
$M$ by the formula
$$  X_kf(x)=X_k(x)\cdot \nabla f(x), $$
and we set $Xf=(X_1 f, X_2f,\cdots, X_\K f)$ and
$$  |Xf(x)|=\left(\sum_{k=1}^\K |X_k f(x)|^2\right)^{1/2}, \quad
x\in M.$$
We define also the higher-order differential operators as follows : for $I\subset \{1,...,\K\}^k$, we set
$$  X_I := \prod_{i\in I}  X_{i}.$$
We assume the following and extra hypothesis:
\begin{ass} \label{ass:}
For every subset $I$, the $I$th-local Riesz transform ${\mathcal R}_I:=X_I (1+L)^{-|I|/2}$ and its adjoint ${\mathcal R}_I^*:=(1+L)^{-|I|/2} X_I $ are bounded on $L^p$ for every $p\in(s_-,s_+)$ (which is the range of boundedness for the Riesz transform $\nabla L^{-1/2}$).
\end{ass}

\subsubsection{Chain rule}

We refer the reader to \cite{CRT} and to \cite{BeS} for precise proofs of these results. 

\begin{lem} For every integer $k\geq 1$ and $p\in(s_-,s_+)$, 
$$ \|f\|_{W^{k,p}} \simeq \sum_{I\subset \{1,...,\K\}^k} \|X_I(f)\|_{L^p}.$$
\end{lem}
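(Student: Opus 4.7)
Since $L=-\sum_{k=1}^{\K} X_k^2$ has order $m=2$, the $W^{k,p}_L$-norm is equivalent to $\|(1+L)^{k/2} f\|_{L^p}$ (this extends Proposition~\ref{prop:equivalence} to integer order $k$, with the same semigroup representation $(1+L)^{-s}=\Gamma(s)^{-1}\int_0^\infty t^{s-1}e^{-t}e^{-tL}\,dt$). The plan is to match the two sides using the boundedness of $\mathcal{R}_I$ and $\mathcal{R}_I^*$ from Assumption~\ref{ass:}, combined with a noncommutative polynomial expansion of $(1+L)^{\ell}$ in the $X_i$'s.

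For the direction $\sum_{|I|\le k}\|X_I f\|_{L^p}\lesssim \|f\|_{W^{k,p}_L}$, I would factor $X_I=\mathcal{R}_I\circ(1+L)^{|I|/2}$. The $L^p$-boundedness of $\mathcal{R}_I$ together with that of the negative power $(1+L)^{-(k-|I|)/2}$ (again via the semigroup formula and off-diagonal decays) yields
$$\|X_I f\|_{L^p}\lesssim \|(1+L)^{|I|/2}f\|_{L^p}=\|(1+L)^{-(k-|I|)/2}(1+L)^{k/2}f\|_{L^p}\lesssim \|(1+L)^{k/2} f\|_{L^p}.$$

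For the reverse direction, I would proceed by induction on $k$, splitting on parity. When $k=2\ell$ is even, $(1+L)^{\ell}$ is a polynomial of degree $\ell$ in $L$, hence a noncommutative polynomial of degree $2\ell=k$ in the $X_i$'s after expanding $L=-\sum_i X_i^2$; this writes $(1+L)^{\ell}f$ as an explicit finite linear combination of $X_J f$ with $|J|\le k$, whence the bound by the triangle inequality. When $k=2\ell+1$ is odd, factor $(1+L)^{k/2}=(1+L)^{1/2}(1+L)^{\ell}$ and reduce to the base case $k=1$ applied to $h:=(1+L)^{\ell}f$, estimating $\|h\|_{L^p}$ and $\|X_i h\|_{L^p}$ by the even case.

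The crux is thus the base case $k=1$, the reverse Riesz inequality $\|(1+L)^{1/2}h\|_{L^p}\lesssim \|h\|_{L^p}+\sum_i\|X_i h\|_{L^p}$. By duality it suffices to bound $\langle (1+L)^{1/2}h,g\rangle$ uniformly for $\|g\|_{L^{p'}}=1$; setting $G:=(1+L)^{-1/2}g$, the identity $L=-\sum_i X_i^2$ and integration by parts give
$$\langle (1+L)^{1/2} h,g\rangle=\langle(1+L)h,G\rangle=\langle h,G\rangle+\sum_i \langle X_i h,X_i G\rangle,$$
and the two pieces are controlled by the $L^{p'}$-boundedness of $(1+L)^{-1/2}$ and of the Riesz transform $\mathcal{R}_i=X_i(1+L)^{-1/2}$, both supplied by Assumption~\ref{ass:}. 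The main obstacle is precisely this $k=1$ reverse Riesz step (and carefully treating lower-order terms from $X_i^*\ne -X_i$ on a general manifold); once it is secured, the passage to general $k$ is purely algebraic via the expansion of $(1+L)^{\ell}$ as a noncommutative polynomial in the $X_i$'s.
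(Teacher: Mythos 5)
Your overall scheme --- one direction via $\mathcal{R}_I$-boundedness, the reverse by a parity-split induction reducing to a $k=1$ base case, the even case by a noncommutative polynomial expansion of $(1+L)^{\ell}$ --- is sound, and I note the paper itself does not prove this lemma (it refers to \cite{CRT} and \cite{BeS}), so there is no in-paper argument to compare against. There is, however, a concrete weakness in your $k=1$ step as written. In the pairing $\langle X_i h, X_i G\rangle$ with $G=(1+L)^{-1/2}g$, the factor $X_iG=\mathcal{R}_i g$ requires $\mathcal{R}_i$ to be bounded on $L^{p'}$; Assumption~\ref{ass:} supplies this only for $p'\in(s_-,s_+)$, and since the interval $(s_-,s_+)$ is not self-dual in general (no relation $s_+=s_-'$ is assumed), $p\in(s_-,s_+)$ does not guarantee $p'\in(s_-,s_+)$. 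You also need $X_i^*=-X_i$ to pass from $\langle X_i^2h,G\rangle$ to $-\langle X_ih,X_iG\rangle$, which you flag but do not resolve. Both issues vanish if you prove $k=1$ directly without duality: since $\mathcal{R}_i^*:=(1+L)^{-1/2}X_i$ is assumed $L^p$-bounded for all $p\in(s_-,s_+)$, write
$$(1+L)^{1/2}h=(1+L)^{-1/2}(1+L)h=(1+L)^{-1/2}h-\sum_i\mathcal{R}_i^*\bigl(X_i h\bigr),$$
so that $\|(1+L)^{1/2}h\|_{L^p}\lesssim\|h\|_{L^p}+\sum_i\|X_ih\|_{L^p}$ at once, with no passage to the dual exponent and no manipulation of $X_i^*$. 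With this replacement your even case (expansion of $(1+L)^\ell$ into $X_J$, $|J|\le 2\ell$) and odd case (apply $k=1$ to $h=(1+L)^\ell f$, then note $X_i h$ expands into $X_J f$ with $|J|\le 2\ell+1$) close the induction cleanly.
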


As a consequence of Theorem \ref{thm:caracterisation}

\begin{prop}[Proposition 19 \cite{CRT}] Let $\alpha:=k+t>1$ (with $k$ an integer and $t \in(0,1)$) and $p\in(s_-,s_+)$, then
\begin{equation} \label{recursive}
 f\in W^{\alpha,p}  \Longleftrightarrow f\in L^p \textrm{  and  } \forall I \subset \{1,...,\K \}^k,\  X_I(f) \in W^{t,p} 
 \end{equation}
 Moreover, under Assumption (\ref{ass}) with $\delta$ sufficently large : $\delta>t +d/s_-$  and  a local Poincar\'e inequality $(P_{s,loc})$ for some $s<2$. Then for all $p\in(s_-,s_+)$ with $s_-\leq \rho$ and $\max(\rho,s) <\min(2,p)$,
 \begin{equation}
 f\in W^{\alpha,p}  \Longleftrightarrow f\in L^p \textrm{ and } \forall I \subset \{1,...,\K \}^k, \  S_{t}^\rho(X_I(f)) \in L^p.  \label{eq:car}
\end{equation}
\end{prop}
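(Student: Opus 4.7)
The second characterization (\ref{eq:car}) follows from (\ref{recursive}) by applying Theorem~\ref{thm:caracterisation} to each function $X_I f$: under the stated hypotheses $\delta > t + d/s_-$, $s_- \leq \rho$, $\max(\rho,s) < \min(2,p)$ and $p \in (s_-, s_+)$, membership of $X_I f$ in $W^{t,p}$ is equivalent to $X_I f \in L^p$ together with $S_t^\rho(X_I f) \in L^p$. So the real content is (\ref{recursive}).

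For (\ref{recursive}), the plan is to use Proposition~\ref{prop:equivalence} to identify $W^{\alpha,p}$ with $\{f \in L^p : (1+L)^{\alpha/2} f \in L^p\}$ and to factor $(1+L)^{\alpha/2} = (1+L)^{k/2}(1+L)^{t/2}$ via holomorphic functional calculus. Setting $h := (1+L)^{t/2} f$, the preceding Lemma tells us that $h \in W^{k,p}$ is equivalent to $h \in L^p$ and $X_I h \in L^p$ for every $|I| = k$, while the identity $(1+L)^{k/2} h = (1+L)^{\alpha/2} f$ shows $f \in W^{\alpha,p} \Longleftrightarrow h \in W^{k,p}$. Hence (\ref{recursive}) is reduced to the equivalence
\[ X_I (1+L)^{t/2} f \in L^p \Longleftrightarrow (1+L)^{t/2} X_I f \in L^p \]
for every $I$ with $|I| = k$. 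Using Assumption~\ref{ass:} together with the identities $X_I = {\mathcal R}_I (1+L)^{|I|/2} = (1+L)^{|I|/2} {\mathcal R}_I^*$, the left-hand side rewrites as ${\mathcal R}_I (1+L)^{\alpha/2} f \in L^p$ and the right-hand side as $(1+L)^{\alpha/2} {\mathcal R}_I^* f \in L^p$.

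The $\Leftarrow$ direction is then immediate: the $L^p$-boundedness of ${\mathcal R}_I$ gives ${\mathcal R}_I (1+L)^{\alpha/2} f \in L^p$ as soon as $(1+L)^{\alpha/2} f \in L^p$. The hard direction is $\Rightarrow$, which amounts to the commutator estimate $[(1+L)^{\alpha/2},{\mathcal R}_I^*] : L^p \to L^p$ — equivalently, to showing that ${\mathcal R}_I^*$ preserves $W^{\alpha,p}$; this does not follow formally from its $L^p$-boundedness and is the main obstacle. I would circumvent it by induction on $k$: one establishes the one-step reduction $f \in W^{\alpha,p} \Longleftrightarrow X_j f \in W^{\alpha-1,p}$ (for all $j$ and every $\alpha > 1$), which after $k$ iterations brings $\alpha$ down to $t \in (0,1)$, the base case already handled by Theorem~\ref{thm:caracterisation}. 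At each recursive step the remaining commutator $[(1+L)^{(\alpha-1)/2}, X_j]$ on $L^p$ can be controlled by representing $(1+L)^{(\alpha-1)/2}$ through a Balakrishnan-type integral over the semigroup, exchanging $X_j$ with $e^{-rL}$, and absorbing the residual terms via the off-diagonal estimates (\ref{eq:off-diag}) together with the boundedness of the higher-order Riesz transforms ${\mathcal R}_I, {\mathcal R}_I^*$ provided by Assumption~\ref{ass:}.
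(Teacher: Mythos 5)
The paper does not actually prove this proposition: in Subsection~6.1.2 it states ``We refer the reader to \cite{CRT} and to \cite{BeS} for precise proofs of these results,'' and the only argument it offers is the remark that (\ref{eq:car}) is ``a consequence of Theorem~\ref{thm:caracterisation}.'' Your first paragraph -- deducing (\ref{eq:car}) from (\ref{recursive}) by applying Theorem~\ref{thm:caracterisation} to each $X_I f$ -- is exactly the reduction the paper has in mind. So far so good.

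Your proposed proof of (\ref{recursive}) itself, however, has two problems. First, the claim that the ``$\Leftarrow$'' direction is immediate is not right: what you actually argue is that if $(1+L)^{\alpha/2}f\in L^p$ then $\mathcal{R}_I(1+L)^{\alpha/2}f\in L^p$, which is an implication going the wrong way (in ``$\Leftarrow$'' one is \emph{given} $(1+L)^{\alpha/2}\mathcal{R}_I^*f\in L^p$ for all $I$ and must \emph{produce} $(1+L)^{\alpha/2}f\in L^p$). Both implications of (\ref{recursive}) hinge on commuting $(1+L)^{t/2}$ past $X_I$ (equivalently on $(1+L)^{t/2}\mathcal{R}_I(1+L)^{-t/2}$, or on the commutator $[(1+L)^{\alpha/2},\mathcal{R}_I^*]$, being bounded on $L^p$); neither is free.

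Second, and more seriously, the inductive strategy you sketch relies at its core on ``exchanging $X_j$ with $e^{-rL}$'' inside a Balakrishnan integral. This fails precisely in the sub-Riemannian setting the proposition is stated for: with $L=-\sum_k X_k^2$ one has $[X_j,L]=-\sum_k\bigl(X_k[X_j,X_k]+[X_j,X_k]X_k\bigr)$, which is generically a nonzero second-order operator (think of the Heisenberg group), so $X_j$ does \emph{not} commute with $e^{-rL}$. The whole point of this section of the paper is the non-commutative sub-Riemannian framework; an argument valid only when the $X_k$'s commute would reduce to the Euclidean case. The correct treatment of the commutator $[(1+L)^{t/2},X_I]$ (which is what \cite{CRT} and \cite{BeS} carry out, using the full strength of Assumption~\ref{ass:} on both $\mathcal{R}_I$ and $\mathcal{R}_I^*$ and Calder\'on--Zygmund-type bounds available in that setting) is exactly the nontrivial content of (\ref{recursive}), and your sketch does not supply a valid substitute.
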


\begin{rem} Note that the formulation of \eqref{recursive} is slightly different from the one of Proposition 19 in \cite{CRT}.
\end{rem}
We finish this section by describing some situations where this sub-Riemannian structure appears.

\begin{ex}
\begin{itemize}
\item Laplacian operators on Carnot-Caratheodory spaces. Let $\Omega$ be an open connected subset of ${\mathbb R}^d$ and
$Y=\{Y_k\}_{k=1}^\kappa$ a family of real-valued, infinitely
differentiable vector fields satisfying the usual H\"ormander condition (which means that the Lie algebra generated by the $X_i$'s is ${\mathbb R}^d$). Then we can define a Riemannian structure associated with these vector fields.

\item Lie groups. Let $M=G$ be a unimodular connected Lie group endowed with its Haar measure $d\mu=dx$ and assume that it has polynomial volume growth. Recall that ``unimodular" means that $dx$ is both left-invariant and right-invariant. Denote by ${\mathcal L}$ the Lie algebra of $G$. Consider a family $X =
\{X\, ... ,X_\kappa\}$ of left-invariant vector fields on $G$ satisfying the H\"ormander condition, which
means that the Lie algebra generated by the $X_i$'s is ${\mathcal L}$. We can build the Carnot-Caratheodory metric which brings a Riemannian structure on the group. In this situation, we know from \cite{NSW} that the group satisfies a local doubling property : $(D)$ is satisfied for $r \leq 1$. Then, the ``local'' results involving the non-homogeneous Sobolev spaces can be applied. Concerning the doubling property for large balls, two cases may occur: either the manifold is doubling or the volume of the balls admit an exponential growth \cite{Gui}. For example, nilpotents Lie groups satisfies the doubling property (\cite{16}). Particular case of nilpotent groups are Carnot groups, where the vector fields are given by a Jacobian basis of its Lie algebra and satisfy H\"ormander condition.\\
Considering the sub-Laplacian $L=- \sum X_i^2$, this frawework was already treated in \cite[Thm5.14]{Robinson} and \cite[Section 3, Appendix 1]{CRT}, in particular the heat semigroup $e^{-tL}$ satisfies Gaussian upper-bounds and Assumption \ref{ass:} on the higher-order Riesz transforms is satisfied too.

\item Particular cases of nilpotents Lie groups are the Carnot groups (if it admits a stratification), as for example the different Heisenberg groups. We refer the reader to \cite{GS} for an introduction of pseudodifferential operators in this context using a kind of Fourier transforms involving irreducible representations. 
\end{itemize}
\end{ex}

\subsection{Nonlinearities preserving Sobolev spaces}

\begin{prop} \label{prop:nonlinearity} Assume a local Poincar\'e inequality $(P_{s,loc})$ for some $s<2$. Let $F$ be a Lipschitz function on $\R$ then it continuously acts on some Sobolev spaces. More precisely, let $\alpha \in (0,1)$, $p\in(s_-,s_+)$ with $p\geq s$ and assume that $\delta$ (in Assumption \ref{eq:off-diag}) is sufficently large : $\delta>\alpha +d/s_-$. Then
\begin{itemize}
\item if $F$ is Lipschitz, we have
$$ \|F(f)\|_{W^{\alpha,p}_L} \lesssim \|f\|_{W^{\alpha,p}_L}.$$
\item if $F$ is locally Lipschitz then for every $R$ there exists a constant $c_R$ such that  for every $f\in W^{\alpha,p}_L \cap L^\infty$ with $\|f\|_{L^\infty}\leq R$ we have
$$  \|F(f)\|_{W^{\alpha,p}_L} \leq c_R \|f\|_{W^{\alpha,p}_L}.$$
\end{itemize}
\end{prop}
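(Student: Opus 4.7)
The plan is to reduce the proposition to a pointwise comparison by invoking the characterization of Sobolev norms in terms of the square functional $S^{\rho,loc}_\alpha$ provided by Theorem \ref{thm:caracterisation}. The hypotheses we are given—$\alpha \in (0,1)$, $p \in (s_-,s_+)$ with $p\geq s$, local Poincar\'e inequality $(P_{s,loc})$ and $\delta > \alpha + d/s_-$—are precisely what is required there. Choosing $\rho = s_-$ (which is admissible since Assumption \ref{ass} gives $s_-<2$ and our hypothesis gives $s_-\leq p$, hence $\max(\rho,s)<\min(2,p)$), we obtain
$$\|u\|_{W^{\alpha,p}_L}\ \simeq\ \|S^{\rho,loc}_\alpha u\|_{L^p}+\|u\|_{L^p}$$
for every $u\in W^{\alpha,p}_L$.

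The heart of the argument is the elementary pointwise observation that if $F:\R\to\R$ is $K$-Lipschitz, then for all $x,y\in M$,
$$|F(f)(y)-F(f)(x)|\ \leq\ K\,|f(y)-f(x)|.$$
Plugging this straight into Definition \ref{def:S} gives, pointwise almost everywhere,
$$S^{\rho,loc}_\alpha(F(f))(x)\ \leq\ K\,S^{\rho,loc}_\alpha(f)(x).$$
Taking $L^p$-norms and combining with the trivial bound $\|F(f)\|_{L^p}\leq K\|f\|_{L^p}$ (which is valid after reducing to the case $F(0)=0$ by subtracting a constant, standard in this setting) yields, via the equivalence above,
$$\|F(f)\|_{W^{\alpha,p}_L}\ \lesssim\ \|S^{\rho,loc}_\alpha F(f)\|_{L^p}+\|F(f)\|_{L^p}\ \leq\ K\bigl(\|S^{\rho,loc}_\alpha f\|_{L^p}+\|f\|_{L^p}\bigr)\ \lesssim\ K\,\|f\|_{W^{\alpha,p}_L},$$
which is the first assertion.

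For the locally Lipschitz case, the only modification is that when $\|f\|_{L^\infty}\leq R$ one has $f(x),f(y)\in [-R,R]$ almost everywhere, and on this compact interval $F$ is Lipschitz with some constant $K_R$. The same pointwise comparison then gives $S^{\rho,loc}_\alpha(F(f))(x)\leq K_R\,S^{\rho,loc}_\alpha(f)(x)$ and $|F(f)(x)|\leq K_R|f(x)|$, so exactly the same chain of inequalities produces the bound with a constant $c_R$ depending only on $R$ and the local Lipschitz behavior of $F$.

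The main ``obstacle'' has already been discharged by Theorem \ref{thm:caracterisation}: once the $W^{\alpha,p}_L$-norm is reformulated in terms of first-order differences of $f$ integrated against a purely metric square functional, stability under Lipschitz composition is immediate because Lipschitz maps contract first-order differences pointwise. The only minor point to verify is the admissibility of the choice $\rho=s_-$ in Theorem \ref{thm:caracterisation}, which is automatic from $s_-<2$ and $p>s_-\geq s$ under our hypotheses; no further analysis of $L$ or of the semigroup is needed beyond what is already encoded in the characterization theorem.
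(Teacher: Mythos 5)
Your proposal is correct and follows essentially the same route as the paper: the paper's proof is exactly the reduction to Theorem \ref{thm:caracterisation} with $\rho=s_-$ (you simply make explicit the pointwise contraction $S^{\rho,loc}_\alpha(F(f))\leq K\,S^{\rho,loc}_\alpha(f)$ that the paper leaves implicit), and for the locally Lipschitz case the paper's restriction of $F$ to $B(0,R)$ is the same observation as your use of the local Lipschitz constant $K_R$. The only caveats, shared with the paper itself, are the implicit normalization $F(0)=0$ needed for the $L^p$ part when $\mu(M)=\infty$ and the use of Keith--Zhong self-improvement if $p=s$, both of which you handle or could handle in passing.
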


\begin{proof} The first claim is a direct consequence of Theorem \ref{thm:caracterisation} with $\rho=s_-$.
The second claim, comes from the following observation: since $F$ is locally Lipschitz, then the restricted function $\tilde{F}:=F_{|B(0,R)}$ is Lipschitz on $B(0,R)$. As for every function $f \in W^{\alpha,p}_L \cap L^\infty$ with $\|f\|_{L^\infty}\leq R$, we have 
$$ F(f) = \tilde{F}(f),$$
the first claim applied to $\tilde{F}$ ends the proof.
\end{proof}

Using the results of the previous subsection, it is possible to extend such results for higher-order Sobolev spaces in the context of a sub-Riemannian structure:

\begin{prop} \label{prop:nonlinearity2} Assume a local Poincar\'e inequality $(P_{s,loc})$ for some $s<2$. Assume that we are in the more-constraining context of a sub-Riemannian structure (as described in Subsection \ref{subsec:sub}) and consider $F$ a function on $\R$. Assume that $F\in W^{N,\infty}_{loc}$ for some integer $N\geq 1$ then it continuously acts on some Sobolev spaces. More precisely, let $\alpha \in (0,N)$, $p\in(s_-,s_+)$ with $p\geq s$. Then there exists $\delta_0>0$ such that, fpr all $\delta>\delta_0$, ($\delta$ occurs in Assumption \ref{eq:off-diag}),
\begin{itemize}
\item if $F\in W^{N,\infty}$, we have
$$ \|F(f)\|_{W^{\alpha,p}_L} \lesssim \|f\|_{W^{\alpha,p}_L}.$$
\item if $F\in W^{N,\infty}_{loc}$ then for every $R$ there exists a constant $c_R$ such that  for every $f\in W^{\alpha,p}_L \cap L^\infty$ with $\|f\|_{L^\infty}\leq R$ we have
$$  \|F(f)\|_{W^{\alpha,p}_L} \leq c_R \|f\|_{W^{\alpha,p}_L}.$$
\end{itemize}
\end{prop}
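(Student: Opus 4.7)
My plan is to reduce the locally Lipschitz case to the globally bounded one by truncation, and then induct on $k := \lfloor \alpha \rfloor$ using the characterisation of higher-order Sobolev spaces recalled in Subsection \ref{subsec:sub}. Write $\alpha = k + t$ with $k$ a non-negative integer and $t \in [0,1)$. For the reduction, given $f \in W^{\alpha,p}_L \cap L^\infty$ with $\|f\|_\infty \leq R$, I pick a cut-off $\chi_R \in C_c^\infty(\R)$ equal to $1$ on $[-R,R]$ and replace $F$ by $\chi_R F \in W^{N,\infty}(\R)$, whose $W^{N,\infty}$ norm depends only on $R$; since $F(f) = (\chi_R F)(f)$, it suffices to treat the globally bounded version.

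The base case $k = 0$ is exactly Proposition \ref{prop:nonlinearity}, since $W^{N,\infty} \subset W^{1,\infty}$ is Lipschitz. For the inductive step $k \geq 1$, I apply \eqref{recursive} to reduce $\|F(f)\|_{W^{\alpha,p}_L}$ to $\|F(f)\|_{L^p}$ (trivial since $F \in L^\infty$) together with $\|X_I(F(f))\|_{W^{t,p}_L}$ for every multi-index $I$ with $|I| = k$. Since each $X_i$ is a first-order derivation, the Fa\`a di Bruno formula yields
$$ X_I(F(f)) \;=\; \sum_{\pi \vdash I} c_\pi \, F^{(|\pi|)}(f) \prod_{J \in \pi} X_J(f), $$
with $\pi$ running over partitions of $I$ into non-empty blocks, in particular $\sum_{J \in \pi} |J| = k$.

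Each Fa\`a di Bruno term is then estimated in $W^{t,p}_L$ by iterated applications of the bilinear Leibniz rule of Theorem \ref{Sobalg1} combined with H\"older's inequality. The scalar $F^{(|\pi|)}(f)$ lies in $L^\infty$ (from $F^{(|\pi|)} \in L^\infty$) and, by the induction hypothesis applied to $F^{(|\pi|)} \in W^{N-|\pi|,\infty}$ with $t < N - |\pi|$, also in $W^{t,p}_L$. Each $X_J(f)$ with $|J| = k_J$ lies in $W^{k - k_J + t, p}_L$ by the characterisation, and in $L^\infty$ via the Sobolev embedding of Corollary \ref{cor:Li} whenever $k - k_J + t > d/p$, or in a sufficiently large $L^q$ otherwise. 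One distinguished factor carries the full $W^{t,\cdot}_L$ regularity while the remaining factors are paired off in $L^\infty$ or in $L^q$ spaces compatible with H\"older. The hard part will be the bookkeeping: at each step one must choose the splitting of $(t,p)$ across the $|\pi|+1$ factors so that every factor sits in a space admissible for Theorem \ref{Sobalg1}, with the exponent conditions of that theorem verified uniformly in $\pi$ and $I$. This bookkeeping is what forces the quantitative hypothesis $\delta > \delta_0$, with $\delta_0$ depending on $\alpha, N, p, d$ and $s_-$ through the Sobolev embeddings used along the way.
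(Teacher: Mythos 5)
Your overall architecture coincides with the paper's: the paper's own proof is only a sketch, reducing via the characterizations \eqref{recursive}--\eqref{eq:car} to the estimate of the Fa\`a di Bruno terms $h=F^{(n)}(f)\prod_\beta X_{i_\beta}(f)$ with $\sum_\beta |i_\beta|=|I|\le k$, and then invoking the iterative argument of Theorem 22 in \cite{CRT} (or the direct proof in \cite{BeS}) to handle those terms. The genuine gap in your proposal is that this last step --- the only non-formal one --- is precisely what you leave as ``bookkeeping'', and the tools you name for it would not do the job under the stated hypotheses. You propose to place the undistinguished factors $X_J(f)$ in $L^\infty$, or in ``a sufficiently large $L^q$'', via the Sobolev embedding of Corollary \ref{cor:Li}; but that embedding requires the non-collapsing hypothesis $(MV_d)$ (equivalently \eqref{eq:minoration}), which is \emph{not} among the assumptions of the proposition, and even granting it, the conditions $k-|J|+t>d/p$ (or the corresponding $L^q$ memberships with exponents compatible both with H\"older and with the admissible range of Theorem \ref{Sobalg1}) are not implied by the hypotheses: when a partition has two or more blocks no factor need be bounded, and the regularity-bearing factor must be kept at an exponent inside $(s_-,s_+)$ while the remaining factors must absorb exactly $k-|J|$ derivatives' worth of integrability. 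What actually closes the argument in \cite{CRT} (Theorem 22) and \cite{BeS} is a Gagliardo--Nirenberg/Moser-type interpolation inequality, roughly $\|X_J f\|_{L^{p(k+t)/|J|}}\lesssim \|f\|_{W^{k+t,p}}^{|J|/(k+t)}\,\|f\|_{L^\infty}^{1-|J|/(k+t)}$, which exploits the $L^\infty$ bound on $f$ itself rather than any embedding; this ingredient never appears in your plan, and without it (or a substitute) the H\"older pairing cannot be carried out uniformly in $\pi$ and $I$.

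Two further inaccuracies. First, ``$\|F(f)\|_{L^p}$ is trivial since $F\in L^\infty$'' is not correct here: a complete non-compact doubling manifold has $\mu(M)=\infty$ (see Section 2), so one needs the usual normalization $F(0)=0$ (giving $|F(f)|\le \|F'\|_{L^\infty}|f|$), the same implicit convention as in Proposition \ref{prop:nonlinearity}. Second, the threshold $\delta_0$ does not arise from the H\"older bookkeeping, into which $\delta$ never enters; it is forced by the square-function characterization of Theorem \ref{thm:caracterisation} (which needs $\delta>t+d/s_-$ at the fractional order $t$) and by the Leibniz/paraproduct estimates behind Theorem \ref{Sobalg1} (which need $\delta>1+d/s_-$), i.e.\ by summing the off-diagonal decay of the semigroup against the volume growth of the dilated balls. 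So the skeleton matches the paper, but as a proof the proposal is incomplete exactly where the analytic content lies.
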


The proof  can be made by iterative arguments on $N$ as for Theorem 22 in \cite{CRT}. Else in \cite{BeS} a direct proof is detailed, the key observation relying on (\ref{eq:car}) is the fact that computing $X_I(F(f))$, we deduce that to bound $X_I(F(f))$ in $W^{t,p}$ is reduced to estimate quantities as
$$ h:= \left[\prod_{\beta=1}^l X_{i_\beta}\right] (f) F^{(n)}(f)$$
where $i_\beta \subset I$, $n\leq k$ and $\sum |i_\beta| = |I|\leq k$. 

\section{Well-posedness results for semilinear PDEs with regular data}

This section is devoted to some applications of algebra properties for Sobolev spaces, concerning well-posedness results for quasi-linear dispersive equations (Schr\"odinger equations) and quasi-linear heat equations associated to the operator $L$.

More precisely, we are interested in the two following problems.

\mb {\bf Schr\"odinger equation~:}

Let $u_0\in L^2(M,{\mathbb C})$ and $F:{\mathbb C} \rightarrow {\mathbb C}$ a smooth function, we are interested in the equation
\begin{equation}
 \left\{ \begin{array}{l}
          i\partial_t u + L u = F(u)  \\
          u(0,.)=u_0.
         \end{array} \right. \label{eq:schr}
\end{equation}

\mb {\bf Heat equation~:}

Let $u_0\in L^p$ and $F:\R \rightarrow \R$ a smooth function, we are interested in the equation
\begin{equation}
 \left\{ \begin{array}{l}
          \partial_t u + L u = F(u)  \\
          u(0,.)=u_0.
         \end{array} \right. \label{eq:heat}
\end{equation}

\mb Here $F(u)$ is the nonlinearity. We refer the reader to \cite{Tao} for precise study of (\ref{eq:schr}) in the Euclidean setting with particular nonlinearities. In this section, we give applications of the previous results (concerning Sobolev spaces), proving well-posedness results for these equations in a general setting.

\subsection{Schr\"odinger equations on a Riemannian manifold}

Assume that $L$ satisfying the assumptions of Subsection (\ref{subsec:semigroup}) is a self-adjoint operator and that Poincar\'e inequality $(P_s)$ holds for some $s<2$.

 Then, spectral theory allows us to build the unitary semigroup $(e^{itL})_{t}$, bounded in $L^2$. Duhamel's formula formally yields
\begin{equation}
 u(t) = e^{itL}u_0 -i\int_0^t e^{i(t-\tau) L} F(u(\tau)) d\tau.
 \label{eq:duhamel}
\end{equation}

\mb We assume that $F$ is smooth in the following sense: identifying ${\mathbb C} ={\mathbb R}^2$, $F$ is smooth as a function from $\R^2$ to $\R^2$. Under this assumption, we have the following result~:

\begin{lem} \label{lem:Flip} The map $F$  acts continuously on the Sobolev spaces. More precisely, for $\alpha\in(0,1)$ and every $R$ (and uniformly with respect to $u,v\in W_L^{\alpha,2}$ with $\|u\|_{W^{\alpha,2}_L\cap L^\infty},\|v\|_{W^{\alpha,2}_L\cap L^\infty}\leq R$), we have
$$ \|F(u)-F(v)\|_{W^{\alpha,2}_L \cap L^\infty} \lesssim_R \|u-v\|_{W^{\alpha,2}_L\cap L^\infty}.$$
The assumption $\alpha\in(0,1)$ can be weakened to $\alpha>0$ in the context of a sub-Riemannian structure (as described in Subsection \ref{subsec:sub}).
\end{lem}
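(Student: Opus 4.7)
The plan is to reduce the estimate on $F(u)-F(v)$ to pointwise estimates on the square functional $S_\alpha^\rho$ from Definition \ref{def:S}, and then invoke the characterization of $W^{\alpha,2}_L$ given by Theorem \ref{thm:caracterisation} (with $\rho=s_-$, so that $\max(\rho,s)<\min(2,2)$ is ensured by $s<2$ and $s_-<2$). The $L^\infty$ estimate is immediate: since $\|u\|_\infty,\|v\|_\infty\leq R$ and $F$ is smooth on $\mathbb{R}^2$, the restriction of $F$ to $\overline{B(0,R)}$ is Lipschitz with constant $c_R:=\|F'\|_{L^\infty(B(0,R))}$, so $\|F(u)-F(v)\|_{L^\infty}\leq c_R\|u-v\|_{L^\infty}$. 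The heart of the proof is to produce a pointwise comparison of $S_\alpha^\rho(F(u)-F(v))$ with $S_\alpha^\rho(u-v)$, $S_\alpha^\rho(u)$ and $S_\alpha^\rho(v)$.

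For this, I would use the first-order Taylor expansion: writing $w:=u-v$ and $H_\sigma:=F'\circ(\sigma u+(1-\sigma)v)$ for $\sigma\in[0,1]$, the fundamental theorem of calculus gives
$$F(u)-F(v)=\int_0^1 H_\sigma\,w\,d\sigma.$$
Therefore, for every $x,y\in M$,
$$[F(u)-F(v)](y)-[F(u)-F(v)](x)=\int_0^1\Bigl[H_\sigma(y)(w(y)-w(x))+(H_\sigma(y)-H_\sigma(x))w(x)\Bigr]d\sigma.$$
Since $\sigma u+(1-\sigma)v$ takes values in $\overline{B(0,R)}$ on which $F'$ is Lipschitz, we get $\|H_\sigma\|_{L^\infty}\lesssim_R 1$ and
$$|H_\sigma(y)-H_\sigma(x)|\lesssim_R |u(y)-u(x)|+|v(y)-v(x)|,$$
uniformly in $\sigma$. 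Plugging these pointwise bounds into the definition of $S_\alpha^\rho$ and using that $\|w\|_{L^\infty}\leq\|u-v\|_{L^\infty}$, one obtains the pointwise inequality
$$S_\alpha^\rho(F(u)-F(v))(x)\lesssim_R S_\alpha^\rho(u-v)(x)+\|u-v\|_{L^\infty}\bigl(S_\alpha^\rho(u)(x)+S_\alpha^\rho(v)(x)\bigr).$$

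Taking $L^2$-norms and applying Theorem \ref{thm:caracterisation} in both directions (which is legitimate since $\alpha\in(0,1)$, $p=2\in(s_-,s_+)$, and the required conditions on $\rho$ and $s$ hold by choosing $\rho=s_-$), we conclude
$$\|F(u)-F(v)\|_{W^{\alpha,2}_L}\lesssim_R \|u-v\|_{W^{\alpha,2}_L}+\|u-v\|_{L^\infty}\bigl(\|u\|_{W^{\alpha,2}_L}+\|v\|_{W^{\alpha,2}_L}\bigr)\lesssim_R \|u-v\|_{W^{\alpha,2}_L\cap L^\infty},$$
which, combined with the $L^\infty$ estimate, yields the claim for $\alpha\in(0,1)$.

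For the sub-Riemannian extension to general $\alpha>0$, I would proceed by induction on the integer part of $\alpha$ using the characterization (\ref{eq:car}), which reduces control of $\|\,\cdot\,\|_{W^{\alpha,2}_L}$ to control of $\|X_I(\,\cdot\,)\|_{W^{t,p}}$ for $|I|=k$ and $t=\alpha-k\in(0,1)$. Expanding $X_I(F(u)-F(v))$ via the chain rule produces a finite sum of terms of the form $P\cdot(F^{(n)}(u)-F^{(n)}(v))$ or $(P-Q)\cdot F^{(n)}(v)$, where $P,Q$ are products of iterated vector-field derivatives of $u$ or $v$. Each factor lives in a Sobolev space controlled by $\|u\|_{W^{\alpha,2}_L}$, $\|v\|_{W^{\alpha,2}_L}$ and their $L^\infty$ norms, so bounding these products reduces, via the Leibniz rule (Theorem \ref{Sobalg1}) and the Sobolev-Lipschitz action of $F^{(n)}$ on lower-order Sobolev spaces (the inductive hypothesis, in the spirit of Proposition \ref{prop:nonlinearity2}), to the desired estimate. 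The main obstacle in this extension is bookkeeping: one must carefully track which factors are differences of $u$ and $v$ versus differences of $F^{(n)}(u)$ and $F^{(n)}(v)$, and apply H\"older and the algebra property with the right exponents at each step so that only the $W^{\alpha,2}_L\cap L^\infty$ norm of $u-v$ appears on the right-hand side, while the norms of $u$ and $v$ are absorbed into the $R$-dependent constant.
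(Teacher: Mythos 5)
Your proof is correct, but it takes a genuinely different route from the paper. The paper's argument is a two-step appeal to prepackaged results: first it writes $F(u)-F(v)=(u-v)G(u,v)+\overline{(u-v)}H(u,v)$ with $G,H$ given by averaged partial derivatives of $F$, then it applies the algebra property of $W^{\alpha,2}_L\cap L^\infty$ (Theorem \ref{Sobalg1}) to peel off the factor $\|u-v\|_{W^{\alpha,2}_L\cap L^\infty}$, and finally invokes Proposition \ref{prop:nonlinearity} (Lipschitz functions act on Sobolev spaces) to control $\|G(u,v)\|_{W^{\alpha,2}_L\cap L^\infty}$ and $\|H(u,v)\|_{W^{\alpha,2}_L\cap L^\infty}$. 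You instead unroll all of this to the level of the square functional $S_\alpha^\rho$ of Definition \ref{def:S}: you take the same Taylor-type factorization $F(u)-F(v)=\int_0^1 H_\sigma\,(u-v)\,d\sigma$, derive a pointwise estimate on $S_\alpha^\rho(F(u)-F(v))$ involving $S_\alpha^\rho(u-v)$, $\|u-v\|_{L^\infty}$, $S_\alpha^\rho(u)$, $S_\alpha^\rho(v)$, and then apply Theorem \ref{thm:caracterisation} in both directions (with $\rho=s_-$, $p=2$) to translate back to Sobolev norms. Both proofs rest on the same ingredients in the end (the square-functional characterization underlies Theorem \ref{Sobalg2} and Proposition \ref{prop:nonlinearity} as well), so yours is essentially a ``self-contained'' rewriting; the paper's is shorter because it reuses the machinery, yours has the pedagogical advantage of making the Lipschitz-difference mechanism visible. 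Two minor remarks: since $F$ is viewed as a map $\mathbb{R}^2\to\mathbb{R}^2$, your $H_\sigma$ is a $2\times 2$ Jacobian matrix rather than a scalar, which does not affect the estimates but should be said; and your use of the global functional $S_\alpha^\rho$ relies on the global Poincar\'e inequality $(P_s)$ assumed in Section 7.1, which is consistent with the paper. For the sub-Riemannian extension your inductive sketch via \eqref{eq:car} is in the same spirit as the paper's citation of Proposition \ref{prop:nonlinearity2}.
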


\begin{proof} Since $F$ is locally Lipschitz, the $L^\infty$-bound on $F(u)-F(v)$ is easily obtained. Let us focus now on the Sobolev norm. Since $F$ is assumed to be smooth, we can write for $x,y\in {\mathbb C}$
$$ F(y)-F(x) = (y-x)G(x,y) + (\overline{y-x}) H(x,y)$$
with
$$ G(x,y):=\int_0^1 \partial_z F(x+t(y-x)) dt \qquad H(x,y):=\int_0^1 \partial_{\overline{z}} F(x+t(y-x)) dt.$$
 Then, applying the algebra property of Sobolev spaces $W^{\alpha,2}_L \cap L^\infty$ (see Theorem \ref{Sobalg1}), it comes
$$ \|F(u)-F(v)\|_{W^{\alpha,2}_L} \lesssim \|u-v\|_{W^{\alpha,2}_L \cap L^\infty} \left( \|G(u,v)\|_{W^{\alpha,2}_L \cap L^\infty} + \|H(u,v)\|_{W^{\alpha,2}_L\cap L^\infty}\right).$$
In addition, the two functions $G$ and $H$ are smooth too and so locally Lipschitz. Using Proposition \ref{prop:nonlinearity}, we deduce that
$$ \|G(u,v)\|_{W^{\alpha,2}_L\cap L^{\infty}} \lesssim_R \|u\|_{W^{\alpha,2}_L\cap L^\infty}+\|v\|_{W^{\alpha,2}_L\cap L^\infty}$$
with an implicit constant, depending on the $L^\infty$-norm of $u,v$. The same holds for $H$ and the proof is therefore complete. In a sub-Riemannian context, we conclude using Proposition \ref{prop:nonlinearity2} instead of Proposition \ref{prop:nonlinearity}.
 \end{proof}

\begin{thm} Let $u_0\in W^{\alpha,2}_L$ with some $\alpha>d/2$ and $\alpha\in(0,1)$. Then $W^{\alpha,2}_L$ is continuously embedded in $L^\infty$ and
\begin{itemize}
 \item there exists a unique solution $u\in C^0_I W^{\alpha,2}_L$ of (\ref{eq:schr}) on some small enough time-interval $I$
 \item the solution $u$ depends continuously on the data.
\end{itemize}
The assumption $\alpha\in(0,1)$ can be weakened to $\alpha>0$ in the context of a sub-Riemannian structure (as described in Subsection \ref{subsec:sub}).
\end{thm}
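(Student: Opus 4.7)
The plan is a standard Banach fixed-point argument on the Duhamel formulation \eqref{eq:duhamel}, carried out in the complete metric space
\[
X_I^R := \{ u \in C^0(I; W_L^{\alpha,2}) : \|u\|_{L^\infty_I W_L^{\alpha,2}} \leq R \},
\]
equipped with the distance inherited from $L^\infty_I W_L^{\alpha,2}$, where the time interval $I=[-T,T]$ and the radius $R$ are to be chosen below.

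Before running the contraction, I would record three facts. First, the continuous embedding $W_L^{\alpha,2}\hookrightarrow L^\infty$ is given by Corollary \ref{cor:Li}, since $\alpha>d/2$ and $2\in(s_-,s_+)$; in particular every $u\in X_I^R$ satisfies $\|u\|_{L^\infty_I L^\infty}\lesssim R$. Second, since $L$ is self-adjoint and non-negative, spectral calculus ensures that $e^{itL}$ commutes with $L^{\alpha/m}$ and is unitary on $L^2$, hence an isometry on $W_L^{\alpha,2}$: in particular $\|e^{itL}u_0\|_{W_L^{\alpha,2}}=\|u_0\|_{W_L^{\alpha,2}}$ for every $t$. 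Third, Lemma \ref{lem:Flip} provides a local Lipschitz bound for $F$ on $W_L^{\alpha,2}\cap L^\infty$, which together with the embedding reduces to a local Lipschitz bound on $W_L^{\alpha,2}$ alone.

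I then define
\[
\Phi(u)(t) := e^{itL} u_0 - i \int_0^t e^{i(t-\tau)L} F(u(\tau))\, d\tau,
\]
set $R := 2\|u_0\|_{W_L^{\alpha,2}}$, and apply the isometry pointwise in $\tau$ together with Lemma \ref{lem:Flip} to get
\[
\|\Phi(u)(t)\|_{W_L^{\alpha,2}} \leq \|u_0\|_{W_L^{\alpha,2}} + |t|\, C_R \, \|u\|_{L^\infty_I W_L^{\alpha,2}}.
\]
Hence $\Phi$ stabilises $X_I^R$ as soon as $T\,C_R\,R \leq \|u_0\|_{W_L^{\alpha,2}}$. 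The same estimate applied to $\Phi(u)-\Phi(v)$ gives
\[
\|\Phi(u)-\Phi(v)\|_{L^\infty_I W_L^{\alpha,2}} \leq T\, C_R \, \|u-v\|_{L^\infty_I W_L^{\alpha,2}},
\]
so reducing $T$ further makes $\Phi$ a strict contraction. Banach's theorem then produces the unique $u\in X_I^R$ solving \eqref{eq:duhamel}, and a standard Gronwall continuation propagates uniqueness to the full space $C^0_I W_L^{\alpha,2}$. Applying the contraction estimate to two solutions with data $u_0,v_0$ taken in a common $W_L^{\alpha,2}$-ball yields $\|u-v\|_{L^\infty_I W_L^{\alpha,2}} \lesssim \|u_0-v_0\|_{W_L^{\alpha,2}}$, which is the continuous dependence.

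The main obstacle, already isolated in Lemma \ref{lem:Flip}, is controlling $F(u)-F(v)$ in a space on which the linear flow is an isometry; the choice $p=2$ combined with the self-adjointness of $L$ is precisely what makes the two compatible, because only for $p=2$ does spectral calculus provide an unconditional unitary group. For the sub-Riemannian strengthening, the only place where $\alpha\in(0,1)$ is used is inside Lemma \ref{lem:Flip}, which rests on Proposition \ref{prop:nonlinearity} and the algebra property of Theorem \ref{Sobalg1}; in the sub-Riemannian setting one replaces these by Proposition \ref{prop:nonlinearity2} and the characterisation \eqref{eq:car}, which deliver the analogous local Lipschitz estimate for all $\alpha>0$, after which the fixed-point argument above carries over without modification.
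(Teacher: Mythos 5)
Your proposal is correct and follows essentially the same route as the paper: the Duhamel formulation, the isometry of $e^{itL}$ on $W^{\alpha,2}_L$ by self-adjointness, the boundedness of the Duhamel integral operator on $C^0_I W^{\alpha,2}_L$, and the local Lipschitz property of $F$ from Lemma \ref{lem:Flip} (replaced by Proposition \ref{prop:nonlinearity2} in the sub-Riemannian case). The only difference is cosmetic: you spell out the Banach contraction on a ball of $C^0_I W^{\alpha,2}_L$, whereas the paper delegates this step to the abstract Duhamel iteration argument of Tao (Proposition 1.38).
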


\begin{proof} 
Let the time-interval $I$ be fixed and consider the map $D$ on $C^0_I W^{\alpha,2}_L$ defined by
 $$ D(f) = -i\int_0^t e^{i(t-s) L} f(s) ds.$$
We follow the reasoning of Section 3.3 in \cite{Tao} and adapt it to our current framework. So we first check that $D$ is bounded on $C^0_I W^{\alpha,2}_L$: indeed for all $t\in I$
\begin{align*} \|D(f)(t)\|_{W^{\alpha,2}_L} & = \| (1+L)^{\alpha/m} D(f)(t)\|_{L^2} \\
 & \leq  \int_0^t \| (1+L)^{\alpha/m} e^{i(t-\tau) L} f(\tau)  \|_{L^2} d\tau \\
 & \leq t \sup_{\tau\in I} \|(1+L)^{\alpha/m}f(\tau)\|_{L^2} \\
 & \leq |I| \|f\|_{C^0_I W^{\alpha,2}_L}.
\end{align*}
In addition, it is easy to check that the unitary semigroup preserves the Sobolev norms~: for all $t\in\R$
$$ \| e^{itL}u_0 \|_{W^{\alpha,2}_L} = \|u_0\|_{W^{\alpha,2}_L}.$$
Then, following Proposition 3.8 in \cite{Tao}, it remains  to check that $F$ is locally Lipschitz, which was done in Lemma \ref{lem:Flip}. As a consequence, we can apply the Duhamel's iteration argument (see Proposition 1.38 in \cite{Tao} and \cite{Seg} for introduction of such arguments): for all $u_0\in W^{\alpha,2}_L $, there exists a unique solution $u\in  C^0_I W^{\alpha,2}_L$ of 
$$ u = e^{itL}u_0 + DF(u)$$
on $I$ as soon as $|I|$ is small enough.
\end{proof}

\subsection{Heat equations on a Riemannian manifold}

Assume that $L$ verifies assumptions in Subsection \ref{subsec:semigroup} and $F$ is a smooth real function.

\begin{thm} Let $p\in(s_-,\infty)$ with $p\geq s$, $u_0\in W^{\alpha,p}_L\cap L^{\infty}$ with $\alpha<1$ and set $S^{\alpha,p}_L:=W^{\alpha,p}_L \cap L^\infty$.
\begin{itemize}
 \item there exists a unique solution $u\in C^0_I S^{\alpha,p}_L$ of (\ref{eq:heat}) on some small enough time-interval $I$
 \item the solution $u$ depends continuously on the data.
\end{itemize}
The assumption $\alpha\in(0,1)$ can be weakened to $\alpha>0$ in the context of a sub-Riemannian structure (as described in Subsection \ref{subsec:sub}).
\end{thm}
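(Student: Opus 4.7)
The plan is to copy the Duhamel/Picard scheme used for the Schrödinger equation and adapt it to the holomorphic heat semigroup $e^{-tL}$. Writing (\ref{eq:heat}) in its mild form
\[
u(t) = e^{-tL}u_0 + \int_0^t e^{-(t-\tau)L} F(u(\tau))\, d\tau,
\]
I would set $X_I:=C^0(I; S^{\alpha,p}_L)$ with $S^{\alpha,p}_L = W^{\alpha,p}_L\cap L^\infty$ and look for a fixed point of $\Phi(u)(t):= e^{-tL}u_0 + D(F(u))(t)$, where $D(f)(t):=\int_0^t e^{-(t-\tau)L}f(\tau)\,d\tau$, inside a closed ball of $X_I$ centered at the free evolution $t\mapsto e^{-tL}u_0$.

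First I would check that the free evolution stays in $X_I$ uniformly in time and that $D$ is bounded on $X_I$ with operator norm $\lesssim |I|$. Since $e^{-tL}$ commutes with $(1+L)^{\alpha/m}$ and is $L^p$-uniformly bounded (via the off-diagonal decay of Assumption \ref{ass}), Proposition \ref{prop:equivalence} gives $\|e^{-tL}g\|_{W^{\alpha,p}_L}\lesssim \|g\|_{W^{\alpha,p}_L}$ uniformly in $t\geq 0$; applying the $L^{s_-}\to L^\infty$ off-diagonal estimate to $g\in L^\infty$ and summing in $k$ similarly yields $\|e^{-tL}g\|_{L^\infty}\lesssim \|g\|_{L^\infty}$. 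These two bounds and Minkowski's inequality in $\tau$ give $\|D(f)\|_{X_I}\lesssim |I|\,\|f\|_{X_I}$, while $t\mapsto e^{-tL}u_0\in X_{[0,\infty)}$ by strong continuity of the holomorphic semigroup on $L^p$ combined with the previous uniform bounds.

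Next, I would prove that $F$ is locally Lipschitz from $S^{\alpha,p}_L$ to itself, which is the heart of the argument. Writing for real $F\in C^\infty$,
\[
F(u)-F(v) = (u-v)\int_0^1 F'(v+\theta(u-v))\,d\theta,
\]
the algebra property of Theorem \ref{Sobalg2} (applicable on the assumed range of $p$) together with Proposition \ref{prop:nonlinearity} applied to the locally Lipschitz function $F'$ on the ball $\{|z|\leq R\}$ gives, for $\|u\|_{X_I},\|v\|_{X_I}\leq R$,
\[
\|F(u)-F(v)\|_{S^{\alpha,p}_L} \leq c_R\|u-v\|_{S^{\alpha,p}_L},
\]
exactly as in Lemma \ref{lem:Flip}. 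The higher-order sub-Riemannian case $\alpha>0$ is obtained by replacing Proposition \ref{prop:nonlinearity} with Proposition \ref{prop:nonlinearity2}.

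Combining the previous steps, for any $R>2\|u_0\|_{S^{\alpha,p}_L}$ I choose $|I|$ small enough so that $|I|\cdot c_R < 1/2$; then $\Phi$ maps the closed ball $B_R\subset X_I$ into itself and is a $1/2$-contraction there. Banach's fixed point theorem furnishes a unique local solution $u\in X_I$, and continuous dependence on $u_0$ follows from applying the same contraction estimate to the difference of two solutions. The main obstacle is really the Lipschitz estimate on $F$ in the Sobolev norm, because without the algebra property of Theorem \ref{Sobalg2} and the nonlinearity action of Proposition \ref{prop:nonlinearity} one would have no way to control $F(u)$ in $W^{\alpha,p}_L$ in this generality; everything else is the standard Picard machinery adapted to a non-unitary but uniformly $X_I$-bounded semigroup.
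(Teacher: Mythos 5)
Your proposal is correct and follows essentially the same route as the paper: the mild (Duhamel) formulation, uniform boundedness of $e^{-tL}$ and hence of the integral operator $D$ on $C^0_I(W^{\alpha,p}_L\cap L^\infty)$ via the off-diagonal decay, the local Lipschitz property of $F$ on $S^{\alpha,p}_L$ obtained from the algebra property together with Propositions \ref{prop:nonlinearity}/\ref{prop:nonlinearity2} (exactly as in Lemma \ref{lem:Flip}), and a Picard/contraction argument giving existence, uniqueness and continuous dependence. You merely spell out the details that the paper leaves to the reader by referring back to the Schr\"odinger case.
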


\begin{proof} We leave the details to the reader, since the proof exactly follows the previous one. Indeed, Duhamel's formula gives for $u$ a solution of (\ref{eq:heat})
 $$ u(t) = e^{-tL}u_0 -\int_0^t e^{-(t-\tau) L} F(u(\tau)) d\tau.$$
The same argument still holds considering the heat semigroup $(e^{-tL})_{t>0}$ instead of the unitary semigroup.
We have to check that the map
$$ D(f) = -\int_0^t e^{-(t-s) L} f(s) ds.$$
is bounded on $C^0_T S^{\alpha,p}_L$. The control of the Sobolev norm still holds and the $L^\infty$-norm is bounded since the off-diagonal decay (\ref{eq:off-diag}) implies the $L^\infty$-boundedness of the semigroup. Arguing similarly, we prove that $F$ is locally Lipschitz on $S^{\alpha,p}_L$. We also conclude as for the previous PDEs, by invoking Duhamel's iteration argument.
\end{proof}

\end{document}